\setlist[enumerate]{itemsep=3pt,topsep=3pt}
\setlist[enumerate,1]{label=\rm{(\roman*)}}
\renewcommand\subsection{\@startsection{subsection}{2}%
  \z@{-.5\linespacing\@plus-.7\linespacing}{.5\linespacing}%
  {\normalfont\bfseries}}
\date{}
\theoremstyle{plain}
\newtheorem*{thm*}{Theorem}
\newtheorem{thm}{Theorem}
\Crefname{thm}{Theorem}{Theorems}
\numberwithin{thm}{section}
\newtheorem*{lem*}{Lemma}
\newtheorem{lem}[thm]{Lemma}
\Crefname{lem}{Lemma}{Lemmas}
\newtheorem*{claim*}{Claim}
\newtheorem{claim}[thm]{Claim}
\crefname{claim}{Claim}{Claims}
\Crefname{claim}{Claim}{Claims}
\newtheorem{prop}[thm]{Proposition}
\Crefname{prop}{Proposition}{Propositions}
\newtheorem{cor}[thm]{Corollary}
\crefname{cor}{Corollary}{Corollaries}
\newtheorem{conj}[thm]{Conjecture}
\crefname{conj}{Conjecture}{Conjectures}
\Crefname{qn}{Question}{Questions}
\newtheorem{obs}[thm]{Observation}
\Crefname{obs}{Observation}{Observations}
\newtheorem{ex}[thm]{Example}
\Crefname{ex}{Example}{Examples}
\theoremstyle{definition}
\Crefname{prob}{Problem}{Problems}
\Crefname{defn}{Definition}{Definitions}
\theoremstyle{remark}
\newtheorem{rem}[thm]{Remark}
\Crefname{rem}{Remark}{Remarks}
\xpatchcmd{\proof}{\itshape}{\normalfont\proofnamefont}{}{}
\newcommand{\proofnamefont}{}
\renewcommand{\proofnamefont}{\bfseries}
\numberwithin{equation}{section}
\newcommand{\N}{\mathbb{N}}
\newcommand{\F}{\mathcal{F}}
\newcommand{\LL}{\mathcal{L}}
\newcommand{\C}{\mathcal{C}}
\newcommand{\eps}{\varepsilon}
\newcommand{\col}[2]{\mathcal{C}(#1,#2)} 
\newcommand{\lex}[3]{\mathcal{L}(#1,#2,#3)}
\DeclareMathOperator{\lexx}{lex}
\newcommand{\me}[1]{e(#1)}
\newcommand{\ds}[1]{P_2(#1)} 
\newcommand{\floor}[1]{\lfloor #1 \rfloor}
\newcommand{\ceil}[1]{\lceil #1 \rceil}
\begin{document}

\title[Hypergraph Lagrangians I: the Frankl-F\"uredi conjecture is false]{Hypergraph Lagrangians I: \\ \vspace{.1cm} the Frankl-F\"{u}redi conjecture is false}
\author{Vytautas Gruslys \and Shoham Letzter \and Natasha Morrison}

\address{Department of Pure Mathematics and Mathematical Statistics, University of Cambridge, Wilberforce Road, Cambridge CB3 0WB, UK.}\email{v.gruslys|s.letzter|morrison@dpmms.cam.ac.uk.}

\thanks{The second author was supported by Dr.~Max
        R\"ossler, the Walter Haefner Foundation and by the ETH Zurich Foundation. The third author is supported by a research fellowship from Sidney Sussex College, Cambridge.}

\begin{abstract}
	An old and well-known conjecture of Frankl and F\"{u}redi states that the Lagrangian of an $r$-uniform hypergraph with $m$ edges is maximised by an initial segment of colex. In this paper we disprove this conjecture by finding an infinite family of counterexamples for all $r \ge 4$. We also show that, for sufficiently large $t \in \N$, the conjecture is true in the range $\binom{t}{r} \le m \le \binom{t+1}{r} - \binom{t-1}{r-2}$.
\end{abstract}

\maketitle

\section{Introduction}

	Maximising the value of a constrained multilinear function is a problem that emerges naturally in many fields of mathematics. The Lagrangian of a hypergraph, introduced in 1965 by Motzkin and Strauss~\cite{MotStr}, is such a function that has featured prominently in extremal combinatorics: notably with relation to the notorious problem of determining hypergraph Tur\'{a}n densities (see the excellent survey by Keevash~\cite{Kee} as well as \cite{HefKee,BraIrwJia,BenNorYep}).

	Let us begin with some definitions. For a set $X$, let $X^{(r)}$ be the family of all subsets of $X$ of size $r$, i.e.\ $X^{(r)} := \{X \subseteq \N: |X| = r\}$. Given a family $G \subseteq \N^{(r)}$, we think of it as an $r$-uniform hypergraph (or, in short, an $r$-graph) whose edges are the members of $G$. The Lagrangian of a finite $r$-graph $G$ is the maximum value taken by the natural multilinear function corresponding to $G$ over the standard simplex. More formally, we define the \emph{Lagrangian} of $G$ as
	\begin{equation}\label{def:lag}
		\lambda(G) := \max \Bigg\{\sum_{e \in E(G)}\prod_{x \in e}w(x)\,:\, w(x) \ge 0 \text{ for all } x \in \N, \text{ and } \sum_{x \in \N}w(i) = 1\Bigg\}.
	\end{equation}

	Observe that, by compactness, $\lambda(G)$ exists for all finite $G$. For $r=2$, it is a simple exercise to show that $\lambda(G)$ is achieved by equally distributing the weight over the vertices of a largest clique in $G$ (and setting $w(x) = 0$ for all other vertices $x$). However, it is a difficult and well-studied problem to calculate the Lagrangian for a general $r$-graph when $r \ge 3$.     

	In this paper we are interested in determining the maximum value of the Lagrangian over all $r$-graphs with a given number of edges. 	The \emph{colexicographic}, or \emph{colex}, order on $\N^{(r)}$ is the ordering in which $A < B$ if and only if $\sum_{i \in A}2^i < \sum_{i \in B}2^i$. Let $\col{m}{r}$ be the family that consists of the first $m$ sets in the colexicographic order on $\N^{(r)}$. A well-known conjecture of Frankl and F\"{u}redi~\cite{FraFur} from 1989 states that the Lagrangian of an $r$-graph with $m$ edges is maximised by $\col{m}{r}$.  
	
	\begin{conj}[Frankl and F\"{u}redi~\cite{FraFur}] \label{conj:frankl-furedi}
	\label{conj:TheConj}
		Let $G$ be an $r$-graph with $m$ edges. Then 
			$$\lambda(G) \le \lambda(\col{m}{r}).$$
	\end{conj}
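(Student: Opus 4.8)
The plan is to run the standard extremal-combinatorics strategy for problems of this kind: it suffices to prove the bound when $G$ is itself a maximizer of $\lambda$ among $r$-graphs with $m$ edges, so assume this, extract structure from an optimal weighting, reduce $G$ to a highly symmetric form, and finally compare directly with $\col{m}{r}$.

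First I would fix an optimal weight vector $w$ for $G$ and, among all optimal vectors, choose one whose support $S = \{x : w(x) > 0\}$ has the fewest vertices. Writing $\partial_x := \sum_{e \ni x}\prod_{y \in e\setminus\{x\}} w(y)$ for the partial derivative of the Lagrangian polynomial at $w$, the first-order optimality (Lagrange multiplier) conditions together with Euler's identity give $\partial_x = \partial_{x'}$ for all $x, x' \in S$, with common value $r\lambda(G)$. Minimality of $|S|$ then forces $G[S]$ to be \emph{pair-covered}: for every two vertices $x, x' \in S$ there is an edge of $G$ through both, since otherwise all the weight on $x'$ could be transferred to $x$ without decreasing $\lambda$, contradicting minimality. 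So we may assume $V(G) = S$, that $G$ has no isolated vertices, and that $G$ is pair-covered.

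Second, I would introduce left-compressions: for $i < j$ let $C_{ij}(G)$ be obtained by replacing each edge $e$ with $j \in e$, $i \notin e$ by $(e\setminus\{j\})\cup\{i\}$ whenever that set is not already an edge. A short exchange argument comparing the contributions of $w(i)$ and $w(j)$ shows $|C_{ij}(G)| = |G|$ and $\lambda(C_{ij}(G)) \ge \lambda(G)$; iterating over all pairs $i < j$ until nothing changes, we may assume $G$ is \emph{left-compressed}. The target then reduces to the following: among left-compressed $r$-graphs with $m$ edges, the colex initial segment $\col{m}{r}$ maximizes the Lagrangian. I would attack this by induction on $m$ via the link decomposition at the top vertex $t := \max V(G)$: write $L := \{e\setminus\{t\} : t \in e \in G\}$ for the link (an $(r-1)$-graph) and $G' := \{e \in G : t \notin e\}$ for the rest (an $r$-graph), so $m = |L| + |G'|$; left-compressedness forces $L$ and $G'$ to be left-compressed and $L$ to be dominated by $G'$ in a precise nested sense. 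Relating $w(t)$ to the other weights through the optimality conditions, one would try to show that any left-compressed $G \ne \col{m}{r}$ admits a swap of a pair of edges moving $G$ strictly closer to the colex segment without decreasing $\lambda$, closing the induction.

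The main obstacle is exactly this last swap step, and it is a genuine one: the Lagrangian depends on the global overlap pattern of the edge set, not merely on which initial colex segment the edge set resembles, so the ``move toward colex'' operation need not be monotone in $\lambda$. For $r = 3$ I expect the link decomposition plus a finite case analysis on $|L|$ relative to $|G'|$ can be pushed through (and the conjecture is known there). For $r \ge 4$, however, I expect the induction to stall: taking $G$ to be a near-complete $K_t^{(r)}$ together with a carefully chosen small collection of edges through $t$ can actually beat $\col{m}{r}$, so the swap step is false and the strategy cannot be completed as stated; isolating the range of $m$ — for instance the band $\binom{t}{r}\le m \le \binom{t+1}{r}-\binom{t-1}{r-2}$ — on which the swap argument does go through, and diagnosing what breaks outside it, is where the real difficulty lies.
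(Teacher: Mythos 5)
The statement you are trying to prove is false, and this paper's main contribution (\Cref{thm:main2}, via \Cref{thm:counter-exs}) is precisely its disproof for every $r \ge 4$: for $m = \binom{t}{r} - \binom{t-2}{r-2} + s$ with $r \le s \le \alpha_r\binom{t-2}{r-2}$ and $t$ large, there is an $r$-graph with $m$ edges whose Lagrangian strictly exceeds $\lambda(\col{m}{r})$. So no completion of your argument can exist, and the gap is exactly where you located it: the ``swap toward colex'' step in your induction is not merely hard but false outside the principal range. To your credit, your closing diagnosis is essentially the paper's counterexample. Writing the non-edges of $G \subseteq [t]^{(r)}$ as the $r$-sets containing $I = \{t-1,t\}$ except for a small set of extra edges encoded by an $(r-2)$-graph $H$ on $[t-2]$, the paper shows that if $G$ maximises the Lagrangian then $H$ must asymptotically maximise the sum of degrees squared $P_2(H) = \sum_x d(x)^2$ among $(r-2)$-graphs of its size and order. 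The colex choice of $H$ fails this badly: already for $r=4$ and $s=4$, colex gives $H = \col{4}{2}$ with $P_2 = 18$, while the star on five vertices gives $20$, so replacing the four colex quadruples through $\{t-1,t\}$ by four quadruples through $\{1,t-1,t\}$ beats colex. This is exactly your ``near-complete $K_t^{(r)}$ plus a carefully chosen small collection of edges'' scenario.

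On the positive side, your toolkit (first-order optimality conditions, support minimisation forcing pair-coveredness, left-compression preserving the edge count and not decreasing $\lambda$) coincides with the paper's preliminaries (\Cref{lem:frank,lem:leftcomp,lem:leftcomp-strong}), and the conjecture \emph{is} proved here for all large $m$ in the principal range $\binom{t-1}{r} \le m \le \binom{t}{r} - \binom{t-2}{r-2}$ (\Cref{thm:main1}). But even there the paper does not run your link-decomposition induction on $m$. Instead it first shows any maximiser with $\binom{t-1}{r} \le m \le \binom{t}{r}$ edges is supported on at most $t$ vertices (\Cref{thm:t-vs}), reduces to the single value $m = \binom{t}{r}-\binom{t-2}{r-2}$, and then compares $G$ with $H = \col{m}{r}$ by pairing edges of $G\setminus H$ with non-edges of $H\setminus G$, bounding the total weight lost by the swaps, and recovering strictly more weight by merging the weights of $t-1$ and $t$ and rebalancing --- a global comparison rather than an edge-by-edge monotone descent. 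If you want to salvage your write-up, reframe it: prove the bound only in the principal range, and convert your final paragraph into an explicit counterexample for $r \ge 4$ by exhibiting the star-versus-colex discrepancy in $P_2$.
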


	Although initially driven by the possible applications to hypergraph Tur\'{a}n problems, this conjecture has, perhaps due to its beautiful and natural statement, inspired a significant line of research over the past thirty years. Before stating our results, we will summarise the previous work in order to place our theorems in context.

	Given $m$, let $t \in \N$ be such that $\binom{t}{r} \le m < \binom{t+1}{r}$.  
	The majority of results in support of \Cref{conj:frankl-furedi} have focussed on the range where $\binom{t}{r} \le m \le \binom{t+1}{r} - \binom{t-1}{r-2}$, for some $t \in \N$ (where $t$ is often taken to be sufficiently large); we refer to this collection of intervals as the \emph{principal range}. It is natural to consider the principal range, as its $t$-th interval is the set of values $m$ for which the colex graph of size $m$ has the same Lagrangian as the clique $[t]^{(r)}$ (see \Cref{rem:principal-range}). An interesting special case of \Cref{conj:TheConj} is when $m = \binom{t}{r}$ and thus the colex graph of size $m$ is a clique; we (following Tyomkyn~\cite{Tyo}) refer to it as the \emph{principal case}. 

	Motzkin and Strauss~\cite{MotStr} proved \Cref{conj:frankl-furedi} in the case $r=2$. For $r=3$, Talbot~\cite{Tal} showed that it holds whenever $\binom{t}{3} \le m \le \binom{t+1}{3} - \binom{t-1}{1} - t$. Subsequent improvements in the principal range when $r=3$ have been made by Tang, Peng, Zhang and Zhao~\cite{TanPen,TanPen2}, Tyomkyn~\cite{Tyo}, and Lei, Lu and Peng~\cite{LLP}
	who respectively showed that \Cref{conj:frankl-furedi} holds for all values in the $t$-th interval of the principal range except the largest $\frac{t}{2}$, $O(t^{3/4})$ and $O(t^{2/3})$.
	
	For $r \ge 4$, the first progress on \Cref{conj:frankl-furedi} was made by Tyomkyn~\cite{Tyo}, who proved the conjecture in the principal case, i.e.\ when $m = \binom{t}{r}$, for all $r$ and all large $t \in \N$. More recently, Nikiforov~\cite{Nik} reproved the principal case via different, analytic methods for all $r$ and large $t$ (and for $r \le 5$ and all $t$). This was subsequently improved by Lu~\cite{lu} who settled the conjecture in the principal case for all $r$ and $t$.

	In fact, Tyomkyn's impressive work \cite{Tyo} proved the conjecture not only in the principal case but also for almost all values of $m$ in the principal range. In particular, he proved the conjecture for all but the largest $O(t^{r-2})$ values of $m$ in the $t$-th interval of this range, for every $t \in \N$. This was improved recently by Lei and Lu~\cite{LeiLu} who proved the conjecture for all but the largest $O(t^{r-7/3})$ values of $m$ in this interval. 

	Outside of the principal range, all that is known is that the conjecture holds for $r = 3$ and $m = \binom{t}{3} - a$, where $a \in \{1,\ldots,4\}$: The case $a \in \{1,2\}$ was proved by Talbot~\cite{Tal}, and $a \in \{3,4\}$ by Tang, Peng, Zhang and Zhao~\cite{TanPen2}.

	Our first main result shows that \Cref{conj:frankl-furedi} holds for all sufficiently large $m$ in the principal range\footnote{For convenience, in what follows we switch from $t$ to $t-1$.}.
	
	\begin{thm}\label{thm:main1}
		Let $r \ge 3$, let $t \in \N$ be sufficiently large, and let $m$ satisfy $\binom{t-1}{r} \le m \le \binom{t}{r} - \binom{t-2}{r-2}$. Then, for every $r$-graph $G$ with $m$ edges, 
		\[
			\lambda(G) \le \lambda(\col{m}{r}).
		\]
	\end{thm}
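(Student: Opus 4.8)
The plan is to reduce to a rigid extremal configuration and then to rule out the one genuinely dangerous case.

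\textbf{Reductions and structure.} It suffices to prove $\lambda(G)\le\lambda(K_{t-1}^{(r)})$ for every $r$-graph $G$ with exactly $m^\ast:=\binom{t}{r}-\binom{t-2}{r-2}$ edges: for a smaller $m$ in the stated range one adjoins $m^\ast-m$ further edges (on new vertices if necessary) to obtain a supergraph $G^+\supseteq G$, so $\lambda(G)\le\lambda(G^+)$, while $\lambda(\col m r)=\lambda(K_{t-1}^{(r)})$ throughout the range by \Cref{rem:principal-range}. Fix an $r$-graph $G$ with $m^\ast$ edges maximising $\lambda$; by the standard compression lemmas we may assume $G$ is left-compressed, and we fix an optimal weighting $w$ with $w(1)\ge w(2)\ge\cdots$, so its support is an initial segment $[k]$. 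Extremality of $(G,w)$ yields two facts used throughout. First, the vertex (Lagrange) condition: $\partial_i p_G(w)=r\lambda(G)$ for every $i\in[k]$, where $p_G$ is the Lagrange polynomial; equivalently $p_{L_v}(w)=r\lambda(G)$ for the link $L_v$ of each support vertex $v$. Second, a swap condition: exchanging an edge $g\in G$ for a non-edge $f$ produces an $m^\ast$-edge graph whose value at $w$ is $p_G(w)+\prod_{x\in f}w(x)-\prod_{x\in g}w(x)$, so maximality forces $\prod_{x\in g}w(x)\ge\prod_{x\in f}w(x)$ for all such $f,g$; hence $G$ is precisely the family of the $m^\ast$ heaviest $r$-sets under $w$, and $G$ and $w$ determine one another.

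\textbf{The easy case, and reduction to one case.} If $k\le t-1$, then $G$ restricted to its support is a subgraph of $K_{t-1}^{(r)}$, so $\lambda(G)=p_G(w)\le\lambda(K_{t-1}^{(r)})$ and we are done; this case already accounts for the colex graph itself, whose optimal weighting is uniform on $[t-1]$ with vertex $t$ receiving weight $0$ exactly because $m\le m^\ast$. So everything reduces to showing that $k\ge t$ forces $\lambda(G)\le\lambda(K_{t-1}^{(r)})$. For the boundary subcase $k=t$ one has $G=K_t^{(r)}\setminus\overline G$ with $\overline G$ right-compressed of size exactly $\binom{t-2}{r-2}$; this should be dispatched by a direct second-order estimate, using that the optimal weighting of the near-complete graph $G$ lies within $O(1/t)$ of uniform on $[t]$, and that the only available gain over $\lambda(K_{t-1}^{(r)})$, namely $\lambda(K_t^{(r)})-\lambda(K_{t-1}^{(r)})=(1+o(1))\binom r2\lambda(K_{t-1}^{(r)})/t^2$, is outweighed — by a factor essentially $2$ — by the loss from deleting $\binom{t-2}{r-2}$ edges, with equality only when $G=\col{m^\ast}r$. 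For $k\ge t+1$ one would exploit the "heaviest‑$m^\ast$‑sets" description together with the vertex conditions: for $k$ to lie in the support, the heaviest set through $k$, namely $\{1,\dots,r-1,k\}$, must be among the $m^\ast$ heaviest sets, and combining this with $p_{L_k}(w)=r\lambda(G)$ (and with the analogous conditions at $t,\dots,k-1$) should bound how far $k$ can exceed $t$ and then show the surplus vertices carry too little weight to bring $\lambda(G)$ up to $\lambda(K_{t-1}^{(r)})$.

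\textbf{The main obstacle.} Making the case $k\ge t$ quantitative is the crux, and the reason it is hard is that the whole analysis must be accurate to second order in $1/t$: the gap between $\lambda(K_{t-1}^{(r)})$ and $\lambda(K_{t-2}^{(r)})$ — indeed between consecutive values of $\lambda(\col\cdot r)$ — is only $\Theta(1/t^2)$. Crude tools do not close this on their own. Stripping the minimum‑weight vertex $k$ (with $\eps:=w(k)\le 1/k$) via $(1-r\eps)\lambda(G)=p_{G-k}(w)\le(1-\eps)^r\lambda(G-k)$ costs only a factor $1-\Theta(\eps^2)$ — borderline against the $\Theta(1/t^2)$ budget, and by itself leaving slack of order $(k-t)/t^2$. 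And the lower bound $|L_k|\gtrsim\bigl(\tfrac{r-2}{r}\bigr)^{r-1}\binom{t-1}{r-1}$ obtained from $p_{L_k}(w)=r\lambda(G)>r\lambda(K_{t-1}^{(r)})$ together with $p_{L_k}(w)\le\lambda(\col{|L_k|}{r-1})$ falls short, by a genuine constant factor, of the $\approx\binom{t-2}{r-1}$ edges in the link one would need in order to deduce — from the edge count $m^\ast=|G-k|+|L_k|$ and the principal range for $t-1$ — that $\lambda(G-k)$ is strictly below $\lambda(K_{t-1}^{(r)})$. Closing this constant‑factor deficit is exactly the heart of the matter: it requires using the vertex conditions at all surplus vertices simultaneously, the swap condition, and local (second‑order) maximality together, and balancing the loss in the stripping inequality against the edge loss, so as to pin the optimal weighting near its target on a near‑complete configuration tightly enough that the conclusion $\lambda(G)\le\lambda(K_{t-1}^{(r)})$ survives to second order.
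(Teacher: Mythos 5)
Your reduction to the boundary case $m^\ast=\binom{t}{r}-\binom{t-2}{r-2}$, the left-compression and monotone-weighting normalisation, and the observation that $G$ must consist of the $m^\ast$ heaviest $r$-sets under $w$ are all correct and match the paper's preliminary steps (\Cref{lem:leftcomp}, \Cref{lem:leftcomp-strong}, \Cref{rem:principal-range}). The easy case $k\le t-1$ is also fine. But you then explicitly acknowledge, in your final paragraph, that the remaining cases $k=t$ and $k\ge t+1$ are not closed: you state that the stripping and link-size estimates you attempt ``fall short, by a genuine constant factor,'' and that ``closing this constant-factor deficit is exactly the heart of the matter.'' That concession is accurate — what you have written is a plan, not a proof — and the route you propose for closing it is not the one that works.

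The paper handles $k\ge t+1$ not by stripping a single low-weight vertex and analysing its link, but by a global counting argument (\Cref{thm:t-vs}): one first shows that all but $O(1)$ support vertices have weight $\Omega(1/t)$ (\Cref{lem:almost-done}, which in turn relies on the vertex-splitting \Cref{lem:bigdiff}, showing that deleting $\Omega(t^{r-1})$ edges costs $\Omega(t^{-2})$ in the Lagrangian); this forces $\Omega(s\,t^{r-1})$ non-edges of $\overline G$ to each have weight $\Omega(t^{-r})$, so $w(\overline G)=\Omega(s/t)$, which is incompatible with $\lambda([t+s]^{(r)})-\lambda([t-1]^{(r)})=O(s/t^2)$. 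For $k=t$, your ``factor essentially $2$'' heuristic is the right first-order signal, but it does not survive as a proof because the $\binom{t-2}{r-2}$ missing $r$-sets can be positioned where the weights are smallest, and the optimal $w$ can deviate from uniform to partially compensate. The paper's actual argument (\Cref{thm:clique} and \Cref{cl:weight-loss-swaps}, \Cref{cl:big-diff}) is a two-step swap-and-reweight scheme: it pairs $E(G)\setminus E(\col{m^\ast}{r})$ with $E(\col{m^\ast}{r})\setminus E(G)$ and bounds $w(G)-w(\col{m^\ast}{r})$ using \Cref{prop:prelims}\ref{itm:useful-w-x}, and then shows that redistributing weight between $t-1$ and a suitably chosen $x_0$ strictly improves $w(\col{m^\ast}{r})$, contradicting maximality of $G$. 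Neither of these mechanisms appears in your sketch, and your own analysis shows the substitutes you propose are quantitatively insufficient.
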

	
	Observe that, except for small values of $t$ and for the case $r = 3$ and $m = \binom{t}{3} - a$, where $a \in [4]$, \Cref{thm:main1} contains and improves upon all results stated above concerning \Cref{conj:frankl-furedi} for $r \ge 3$. We also remark that the proof of \Cref{thm:main1} is combinatorial in nature, unlike the proofs given by Lu~\cite{lu} and Nikiforov~\cite{Nik} for the principal case.
	
	Our second main result shows that, for each $r \ge 4$, there exists an infinite family of counterexamples to \Cref{conj:frankl-furedi}.
	
	\begin{thm}\label{thm:main2}
		Let $r \ge 4$, then there exists a constant $\alpha_r > 0$ such that the following holds. Let $t \in \N$ be sufficiently large and let $m:= \binom{t}{r} - \binom{t-2}{r-2} + s$, where $r \le s \le \alpha_r \cdot \binom{t-2}{r-2}$. Then there exists an $r$-graph $G$ with $m$ edges for which
		\[
			\lambda(G) > \lambda(\col{m}{r}).
		\]
	\end{thm}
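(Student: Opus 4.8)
The plan is to build the counterexample $G$ by taking a near-clique and perturbing it cleverly, then to estimate its Lagrangian from below by exhibiting an explicit weighting. Concretely, I would start from the clique $[t]^{(r)}$, remove all $\binom{t-2}{r-2}$ edges through a fixed pair $\{t-1,t\}$ (this is exactly the set of edges that distinguishes $\binom{t}{r}$ from $\binom{t}{r}-\binom{t-2}{r-2}$ in colex; compare the statement of \Cref{thm:main1}), and then add back $s$ edges. The subtle point is \emph{which} $s$ edges to add back: rather than adding an initial segment of colex (which would recover part of $\col{m}{r}$), I would add $s$ edges that form a small but ``dense'' configuration, e.g.\ all $r$-sets inside $\{1,\dots,k,t-1,t\}$ that contain $\{t-1,t\}$, for an appropriate $k = k(s)$ with $\binom{k}{r-2} \approx s$ — or, more likely, edges clustered on a few vertices outside $[t]$ so that they interact multiplicatively with a heavy common neighbourhood. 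The heuristic is that colex ``spreads'' the extra $s$ edges thinly across the link of a single vertex, whereas concentrating them lets a near-uniform weighting on a slightly enlarged clique pick up an extra second-order term.

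Next I would set up the lower bound $\lambda(G) \ge \sum_{e}\prod_{x\in e} w(x)$ for a carefully chosen $w$. The natural choice is to put weight roughly $1/(t+O(1))$ on each of $[t]$ together with a few extra vertices carrying the added edges, and then optimise the split. I would expand $\sum_e \prod_{x \in e} w(x)$ as (Lagrangian of the clique) minus (loss from the $\binom{t-2}{r-2}$ removed edges) plus (gain from the $s$ added edges), with each term computed to the relevant order in $1/t$. In parallel I would need a matching \emph{upper} bound on $\lambda(\col{m}{r})$: since $m$ exceeds $\binom{t}{r}-\binom{t-2}{r-2}$ by only $s \le \alpha_r\binom{t-2}{r-2}$, the colex graph is the clique $[t]^{(r)}$ minus the pair-link of $\{t-1,t\}$ plus an initial colex segment of $s$ edges sitting in the link of vertex $t$; one can bound its Lagrangian by a perturbation argument around the clique, using that an optimal weighting for $\col{m}{r}$ must be nearly uniform on $[t-1]$ and assign vanishing weight to $t-1$ (this kind of stability is implicit in the principal-case analysis). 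Comparing the two expansions, the $\binom{t}{r}$ and $-\binom{t-2}{r-2}$ contributions cancel, and the comparison reduces to showing that the ``concentrated'' $s$-edge gadget beats the ``colex-segment'' $s$-edge gadget at the relevant order — this is where the constant $\alpha_r$ and the lower bound $s \ge r$ enter, and where $r \ge 4$ is used (for $r = 3$ the link of a pair is just a path, leaving no room for a better configuration, consistent with \Cref{conj:frankl-furedi} being open/true there).

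I expect the main obstacle to be the upper bound on $\lambda(\col{m}{r})$: lower-bounding $\lambda(G)$ only requires guessing one good weighting, but proving $\col{m}{r}$ cannot do better requires controlling \emph{all} weightings, i.e.\ a genuine stability/optimisation argument showing the optimal weight vector for the colex graph is within $O(1/t^2)$ of uniform-on-$[t-1]$ in the right norm, so that the $s$-edge term is captured exactly to leading order and lower-order slack does not swamp the gain. A secondary difficulty is choosing the gadget and the weighting simultaneously so that the first-order terms in the perturbation vanish (forcing near-uniformity) while the second-order term is strictly positive and of order $s/t^{2r-3}$ or so; getting the exponents of $t$ to line up between the gain from the gadget and the loss from re-balancing weight onto the extra vertices is the delicate bookkeeping. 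Once the gadget is pinned down, verifying $\lambda(G) - \lambda(\col{m}{r}) > 0$ should be a finite computation in $r$ and a polynomial inequality in $s$ and $t$, from which $\alpha_r$ is read off.
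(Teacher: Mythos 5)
Your proposal goes wrong at the very first concrete step: the gadget you suggest is the colex graph itself. Adding back ``all $r$-sets inside $\{1,\dots,k,t-1,t\}$ that contain $\{t-1,t\}$'' makes the link of $\{t-1,t\}$ into the $(r-2)$-clique $[k]^{(r-2)}$, which is exactly the colex $(r-2)$-graph $\col{s}{r-2}$, so this construction \emph{is} $\col{m}{r}$ when $s=\binom{k}{r-2}$ (and is only a relabelling of a colex graph otherwise). The alternative you float --- clustering the extra edges on vertices outside $[t]$ --- cannot help either: \Cref{thm:t-vs} (and independently Lei--Lu) shows a maximiser is supported on at most $t$ vertices, so any gain would have to come from within $[t]$. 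The configuration you actually want is the opposite of a clique in the link: take $H$ to be the \emph{lex} $(r-2)$-graph, i.e.\ add edges of the form $\{1,\dots,r-3\}\cup\{y\}\cup\{t-1,t\}$. Concentrating the link degree on the $r-3$ heaviest vertices is what beats spreading it over a small clique, and this is precisely where $r\ge 4$ enters (for $r=3$ the link is a $1$-graph, so lex and colex coincide and there is nothing to gain).

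Beyond the gadget, the paper's route to \Cref{thm:main2} is structurally different from your direct two-sided comparison. Rather than lower-bounding $\lambda(G)$ with an explicit weighting and separately upper-bounding $\lambda(\col{m}{r})$ via a stability argument, the paper proves \Cref{thm:counter-exs}: if a left-compressed $G\subseteq[t]^{(r)}$ with all non-edges through $I=\{t-(i-1),\dots,t\}$ maximises the Lagrangian, then its link $H$ must \emph{asymptotically maximise the sum of degrees squared} $P_2(\cdot)$ among $(r-i)$-graphs on $t-i$ vertices with $|H|$ edges. Applying this with $G=\col{m}{r}$ and $i=2$ gives $H\simeq\col{s}{r-2}$, and \Cref{prop:ds-lex-colex,prop:ds-lex-colex-small} show $P_2(\col{s}{r-2})$ lags $P_2(\lex{s}{r-2}{t-2})$ by a constant factor --- contradiction. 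This reduction to a clean combinatorial invariant ($P_2$) sidesteps the obstacle you correctly flag as the hard part: it never needs a sharp upper bound on $\lambda(\col{m}{r})$, only the soft structural information from \Cref{lem:frank,cor:frankl} applied under the hypothesis that colex is a maximiser. Your plan, by contrast, leaves the upper bound as an unresolved ``stability argument,'' and getting the second-order terms to line up in a direct perturbation expansion is exactly the technical work that \Cref{thm:counter-exs} packages once and for all. So the approach is not merely less efficient --- as written it has a wrong construction and a genuine gap at the step you yourself identify as the crux.
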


	In fact, \Cref{thm:main2} is a special case of a much stronger result (\Cref{thm:counter-exs}), that relates the problem of maximising the Lagrangian with the problem of maximising the sum of degrees squared.
	
	In light of \Cref{thm:main2}, it remains to determine whether \Cref{conj:TheConj} is true in the case $r=3$. In a subsequent paper~\cite{Us2}, building upon the groundwork established here, we show that this is indeed the case. 
	
	\begin{thm}[\cite{Us2}]\label{thm:r3}
		Let $m$ be sufficiently large. Then, for any $3$-graph $G$ with $m$ edges, 
		\[
			\lambda(G) \le \lambda(\col{m}{3}).
		\]
	\end{thm}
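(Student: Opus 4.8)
The plan is to use \Cref{thm:main1} to reduce to values of $m$ lying just below a binomial coefficient, and then to attack that range via a structural reduction to an optimisation problem about ``holes'' in a clique — a problem for which $r=3$ sits exactly on the boundary of what the colex hole achieves.

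\emph{Reduction to a hole problem.} For $m$ large, let $t$ satisfy $\binom{t-1}{3}\le m<\binom{t}{3}$ (so $t$ is large). If $m\le\binom{t}{3}-\binom{t-2}{1}$ we are in the principal range and \Cref{thm:main1} applies; so assume $m=\binom{t}{3}-a$ with $1\le a\le t-3$. Here $\col{m}{3}=[t]^{(3)}\setminus F_{0}$, where $F_{0}$ is the ``bi-star'' consisting of the $a$ colex-largest triples of $[t]^{(3)}$, namely $\{x,t-1,t\}$ for $x$ among the $a$ largest elements of $[t-2]$; we must show $\lambda(G)\le\lambda([t]^{(3)}\setminus F_{0})$ for every $m$-edge $3$-graph $G$. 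Apply the compression reductions of this paper to assume $G$ is left-compressed, and fix an optimal weighting $w$ of smallest support $[n]$, so that $w_{1}\ge\cdots\ge w_{n}>0$, every pair of $[n]$ lies in an edge, and $\partial_i P_{G}(w)=3\lambda(G)$ for all $i\in[n]$, where $P_{G}(w)=\sum_{e\in G}\prod_{x\in e}w(x)$ and $\partial_i$ denotes differentiation in the $i$-th coordinate. We may assume $\lambda(G)>\lambda([t-1]^{(3)})$, since $\col{m}{3}\supseteq[t-1]^{(3)}$. As $|G|\le\binom{t}{3}$, the principal case (Lu~\cite{lu}) gives $\lambda(G)\le\lambda([t]^{(3)})$; together with $\lambda([t]^{(3)})-\lambda(\col{m}{3})\le a/t^{3}<t^{-2}$ (uniform weights on $\col{m}{3}$) this traps $\lambda(G)$ within $t^{-2}$ of $\lambda([t]^{(3)})$. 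A stability form of the principal case then yields a $t$-set $S$ with $G[S]$ omitting only $o(t^{2})$ triples of $S^{(3)}$ and only $o(t^{2})$ edges of $G$ meeting $\N\setminus S$; for $v\notin S$ the link of $v$ then has $o(t^{2})$ edges, hence clique number below $t/5$, so by Motzkin--Straus $\partial_v P_{G}(w)\le\lambda\big(\mathrm{link}_{G}(v)\big)(1-w(v))^{2}<\tfrac12(1-5/t)<3\lambda(G)$ for large $t$, forcing $w(v)=0$. Hence $w$ is supported on $S$; writing $F:=S^{(3)}\setminus G[S]$ (so $|F|\ge\binom{t}{3}-m=a$) and identifying $S$ with $[t]$, we get $\lambda(G)=\lambda([t]^{(3)}\setminus F)$. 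Compressing $[t]^{(3)}\setminus F$ lets us take $F$ right-compressed, and since deleting further triples cannot raise the Lagrangian and $\lambda(\col{m}{3})=\lambda([t]^{(3)}\setminus F_{0})$, it suffices to show
\[
 \lambda\big([t]^{(3)}\setminus F\big)\le\lambda\big([t]^{(3)}\setminus F_{0}\big)\qquad\text{for every right-compressed }F\subseteq[t]^{(3)}\text{ with }|F|=a
\]
(the case $|F|>a$ following by monotonicity of $\lambda$ under edge-deletion).

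\emph{The hole problem.} I would analyse this by expanding $P_{[t]^{(3)}\setminus F}$ about the uniform point $u=(1/t,\dots,1/t)$: writing $w=u+\eps$ with $\sum_{i}\eps_{i}=0$,
\[
 P_{[t]^{(3)}\setminus F}(u+\eps)=\frac{\binom{t}{3}-a}{t^{3}}-\frac{t-2}{2t}\sum_{i}\eps_{i}^{2}-\frac{1}{t^{2}}\sum_{i}|F_{i}|\,\eps_{i}+(\text{cubic in $\eps$, and codegree-weighted terms of $F$}),
\]
where $|F_{i}|$ is the degree of $i$ in $F$. Optimising the quadratic gives $\eps_{i}\approx-\tfrac1{t(t-2)}(|F_{i}|-3a/t)$, whence
\[
 \lambda\big([t]^{(3)}\setminus F\big)=\frac{\binom{t}{3}-a}{t^{3}}+\frac{\sum_{i}|F_{i}|^{2}-9a^{2}/t}{2(t-2)\,t^{3}}+(\text{lower order}).
\]
The guiding principle is thus that $\lambda([t]^{(3)}\setminus F)$ increases with $\sum_{i}|F_{i}|^{2}$; and for every $a$-edge $3$-graph $\sum_{i}|F_{i}|^{2}\le 2a^{2}+a$, with equality exactly for bi-stars through a pair (apart from a few small sporadic $a$, e.g.\ $a\in\{3,4\}$, where configurations like $K_{4}^{(3)}$ also attain the bound). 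The right-compressed bi-star is exactly $F_{0}$, so $F_{0}$ is the essentially unique extremiser, and this is precisely where $r=3$ is critical: for $r\ge4$ the colex hole does \emph{not} extremise the analogous degree-squared functional — a ``star through a triple'' beats it — which is the ultimate source of the counterexamples of \Cref{thm:main2}.

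\emph{The main obstacle.} Two steps carry the real work. First, the support reduction needs a quantitative stability version of the principal case, with constants uniform over the whole range $1\le a\le t-3$, to pin the optimal weighting to a single $t$-set. Second, and more essentially, the hole problem must be settled \emph{exactly}, not merely to leading order: one must control the cubic and codegree-weighted remainders in the expansion — which for $a$ comparable to $t$ are no longer negligible — and, at the sporadic values, break the leading-order ties (for instance, proving $\lambda([t]^{(3)}\setminus F_{0})>\lambda([t]^{(3)}\setminus K_{4}^{(3)})$) by a direct second-order comparison. It is in carrying this out that one uses decisively that the relevant link objects for $r=3$ are $2$-graphs, for which colex is extremal both for the Lagrangian (Motzkin--Straus) and for every clique count (Kruskal--Katona).
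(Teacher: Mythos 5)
The paper does not prove \Cref{thm:r3} here; it defers the proof to \cite{Us2} and only sketches the strategy in \Cref{sec:sketch} and \Cref{subsec:remarks}. That strategy is: (1) use \Cref{thm:t-vs} to place a maximiser on $[t]$, (2) show (in \cite{Us2}) that for almost all $m$ the non-edges of a maximiser must \emph{all} contain the pair $\{t-1,t\}$, and (3) observe that for $r=3$ the resulting hole is a subgraph of the bi-star parameterised by a $1$-graph, and all $1$-graphs of a given size are isomorphic, so $G\simeq\col{m}{3}$. Your route is genuinely different: you keep the hole $F\subseteq[t]^{(3)}$ general, expand $\lambda([t]^{(3)}\setminus F)$ to second order around the uniform weighting, and reduce to maximising $\sum_i|F_i|^2$ over $a$-edge $3$-graphs. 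The $P_2$-connection you exploit is indeed the one the paper uncovers (\Cref{thm:counter-exs}), and the observation that bi-stars extremise $\sum d_i^2$ for $3$-graphs while the colex hole fails to do so for $r\ge4$ correctly explains both \Cref{thm:r3} and \Cref{thm:main2}. But the two approaches diverge in where the work is done: the paper's structural step ``all holes contain $\{t-1,t\}$'' trivialises the $P_2$-problem for $r=3$, whereas your route requires solving the $P_2$-optimisation and the Taylor remainder analysis in full generality.

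Two concrete gaps remain. First, your support reduction is both more complicated and weaker than needed: you invoke an unproved ``quantitative stability form of the principal case'' plus a Motzkin–Straus link argument, whereas \Cref{thm:t-vs} already gives a maximiser supported on $[t]$ for the whole range $\binom{t-1}{r}\le m\le\binom{t}{r}$ directly; you should simply cite it. Second, and decisively, the error control is not carried out. With $1\le a\le t-3$, the quadratic gain $\big(\sum|F_i|^2-9a^2/t\big)/\big(2(t-2)t^3\big)$ scales like $a^2/t^4$, while the discarded cubic and codegree-weighted terms scale like $a^3/t^5$; these are of the same order when $a=\Theta(t)$. Unless you bound the cubic remainder by a small fraction of the second-order gap between $F_0$ and any competitor $F$ --- which you do not do, and which is exactly where $F$ being a general $3$-graph rather than a sub-bi-star hurts --- the argument does not close at the top of the range. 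You also assert $\sum|F_i|^2\le2a^2+a$ with the bi-star extremal for $a$-edge $3$-graphs, which is plausible and needed, but left unproved; and the ties at small $a$ (your $a\in\{3,4\}$) must be broken by a genuinely higher-order comparison that you only gesture at. As written, this is a promising sketch, not a proof.
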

	
	The first step in the proofs of \Cref{thm:main1,thm:r3} is to show that given $m$ with $\binom{t-1}{r} \le m \le \binom{t}{r}$, there exists a maximiser of the Lagrangian among $r$-graphs with $m$ edges, which is supported on $t$ vertices (see \Cref{thm:t-vs}). Such a result was simultaneously and independently\footnote{We uploaded to arXiv a preliminary version of this paper (see \href{https://arxiv.org/abs/1807.00793v2}{\textcolor{blue}{arXiv1807.00793v2}}), which also included the content of our second paper \cite{Us2}, shortly after Lei and Lu's paper appeared.} proved by Lei and Lu~\cite{LeiLu}. An analogous result for $r=3$ was proved by Talbot~\cite{Tal} and was used in \cite{Tal,TanPen,Tyo,LLP} to prove that \Cref{conj:frankl-furedi} holds in certain ranges. 
	
	Our proofs use some standard results concerning Lagrangians. As they are straightforward, we include them in the next section (\Cref{sec:prelims}), in order to keep this paper complete and self-contained. In \Cref{sec:supp} we prove \Cref{thm:t-vs}, which restricts the support of a maximiser. In \Cref{sec:a-big} we prove \Cref{thm:main1} and in \Cref{sec:counter} we prove \Cref{thm:main2}. We conclude the paper in \Cref{sec:conclusion} with some closing remarks.

\section{Proof sketches} \label{sec:sketch}
	
	In this section we sketch the proofs of our main results. 	

	\subsection{Bounding the support of $G$}
		As an important step towards the proof of \Cref{thm:main1} -- that the Frankl-F\"uredi conjecture holds for all large $m$ in the principal range -- we prove that for any $m$ with $\binom{t-1}{r} \le m \le \binom{t}{r}$ (and large $t \in \N$), there is a maximiser of the Lagrangian among $r$-graphs with $m$ edges which is supported on $t$ vertices (see \Cref{thm:t-vs}). 
		
		To prove this we first prove a few simple facts about such maximisers $G$ and the corresponding weight function $w$: for example, we show that every vertex in $G$ has weight $O(1/t)$, we conclude that the degree of each vertex is large (namely, $\Omega(t^{r-1})$, and ignoring vertices with weight $0$), from which it follows that $G$ has $O(t)$ vertices (see \Cref{prop:bigO}).
		
		Our next step, which is the key step in the proof of \Cref{thm:t-vs}, is to show that all but $O(1)$ vertices of $G$ have weight $\Omega(1/t)$ (see \Cref{lem:almost-done}~\ref{itm:lowerbd}). The trick here is to compare the Lagrangians of $G$ and $G'$, where $G'$ is formed by removing from $G$ all edges with at least two vertices in the set $S$ of the $\Omega(1)$ vertices of smallest weight: on the one hand, we remove many edges and thus the Lagrangian drops significantly (using \Cref{lem:bigdiff}), but on the other hand, if all vertices in $S$ have weight $o(1/t)$ then $w(G')$ is not much smaller than $w(G) = \lambda(G)$, a contradiction.

		To finish the proof of \Cref{thm:t-vs}, we assume towards contradiction that $G$ has $n > t$ vertices, and consider $w(\overline{G})$ -- the weight of the complement graph $\overline{G} := [n]^{(r)} \setminus G$. We show that no vertex has very large degree in $\overline{G}$, which implies, by the key step, that $w(\overline{G})$ is large. However, as  $w(G) \le w([n]^{(r)}) - w(\overline{G}) \le \lambda([n]^{(r)}) - w(\overline{G})$, it follows that $w(G)$ is smaller than it should be, a contradiction.
	
	\subsection{The Frankl-F\"uredi conjecture holds in the principal range}
		In order to prove \Cref{thm:main1}, which states that the Frankl-F\"uredi conjecture holds in the principal range, it suffices to consider $m$ of the form $\binom{t}{r} - \binom{t-2}{r-2}$ with $t \in \N$, and to show that every $r$-graph $G$ with $m$ edges satisfies $\lambda(G) \le \lambda(\col{m}{r}) = \lambda([t-1]^{(r)})$ (see \Cref{rem:principal-range}). By \Cref{thm:t-vs}, we may assume that $V(G) = [t]$. Let $w$ be a weight function for $G$ with $w(G) = \lambda(G)$, and assume without loss of generality that $w$ is decreasing, i.e.\ $w(1) \ge \ldots \ge w(t)$. 
				
		We again start by proving simple facts about $G$ and $w$ (see \Cref{prop:prelims}). For example, we show that $w(x) = \Omega(1/t)$ for every $x \in [t-1]$ -- this fact and its proof are reminiscent of the key step in the previous proof. We use this, as well as known facts about Lagrangians, to obtain an estimate on the weight of any vertex $x \in [t-1]$ in terms of $w(t)$ and the number of absent edges incident with $x$.

		To conclude the proof, we consider the $r$-graph $H := \col{m}{r}$ -- this is the $r$-graph on vertex set $[t]$ whose non-edges are exactly the $r$-sets containing $\{t-1, t\}$. As $G$ and $H$ have the same number of edges, we can pair the elements of $E(G) \setminus E(H)$ with those of $E(H) \setminus E(G)$, and we think of $H$ as obtained from $G$ by swapping edges and non-edges that form pairs. We evaluate $w(G) - w(H)$ by evaluating the contribution of each swap, using the above estimate of the weight of a given vertex. We then use the symmetry of $H$ to show that by slightly modifying $w$ we are able to regain more weight than we lost, which implies that $w(H) > w(G)$ (unless $\lambda(G) \le \lambda([t-1]^{(r)})$), a contradiction to the choice of $G$. 

	\subsection{The Frankl-F\"uredi conjecture is false}

		In \Cref{sec:counter} we reveal a connection between the Lagrangian of an $r$-graph $G$ with certain properties, and the sum of degrees squared of a related $(r-2)$-graph $H$ (see \Cref{thm:counter-exs}). We exploit this connection to show that quite often outside of the principal range, the colex graph does not maximise the Lagrangian. 

		Consider an $r$-graph $G$ on vertex set $[t]$, whose edge set consists of all $r$-sets that do not contain $I := \{t-(i-1), \ldots, t\}$, where $2 \le i \le r-2$, together with some additional $r$-sets that we encode by an $(r-i)$-graph $H$ (namely, $H$ is an $(r-i)$-graph on vertex set $[t-i]$ whose edges are $(r-i)$-sets whose union with $I$ forms an edge of $G$). We observe that any colex graph $\col{m}{r}$ with $\binom{t}{r} - \binom{t-2}{r-2} < m < \binom{t}{r}$ and $t \in \N$ has this form. 

		Suppose that $G$ is such an $r$-graph that maximises the Lagrangian among $r$-graphs with $|G|$ edges. In \Cref{thm:counter-exs}, we prove that $H$ asymptotically maximises the sum of degrees squared among $(r-i)$-graphs with $t-i$ vertices and $|H|$ edges. Using this result, we show (see \Cref{subsec:counter-exs}) that the Frankl-F\"uredi conjecture is false for many values $m$, by demonstrating that the appropriate graph $H$ is far from maximising sum of degrees squared.

		To prove \Cref{thm:counter-exs}, we consider an $r$-graph $G'$ with the above structure, such that $|G'| = |G|$ and such that the $(r-i)$-graph $H'$, defined similarly to $H$, maximises the sum of degrees squared among $(r-i)$-graphs with $t-i$ vertices and $|H|$ edges. Given a weight function $w$ for $G$ with $w(G) = \lambda(G)$, we compare $w(G)$ with $w'(G')$, where $w'$ is a suitable weight function. Similarly to the previous proof, our first step is to estimate $w(x)$ in terms of the number of edges of $H$ incident with $x$ (the approach in the previous proof would allow us to estimate $w(x)$ in terms of the \emph{non-edges} of $H$). We use this estimate to define a suitable $w'$ by modifying $w$ appropriately. The remainder of the proof is a somewhat technical evaluation of $w'(G') - w(G)$ in terms of the difference between the sum of degrees squared of $H'$ and $H'$, where we consider separately the edges containing $I$ and the edges not containing $I$.

\section{Preliminaries}\label{sec:prelims}

	In this section we introduce definitions and notation that will be used throughout the paper, and prove some preliminary lemmas.     

	Say that $w = (w(x))_{x \in \N}$ is a \emph{weighting} of $\N$ if $w(x) \ge 0$ for all $x \in \N$ and $\sum_{x \in \N}w(x) = 1$. For $e \in \N^{(r)}$ and a weighting $w$ of $\N$, define 
	\[	
		w(e):= \prod_{x \in e}w(x),
	\]
	and for a finite $G \subseteq \N^{(r)}$ define
	\[
		w(G):= \sum_{e \in G}w(e).
	\]
	We may also say that $w$ is a \emph{weighting of $[t]$} if it is a weighting of $\N$ supported on $[t]$, or that $w$ is a \emph{weighting of $G$}, where $G \subseteq [t]^{(r)}$, if $w$ is a weighting of $[t]$.
	Say that a weighting $w$ of $\N$ is \emph{maximal} for $G$ if $w(G) = \lambda(G)$. 
	Also define
	\[
		\Lambda(m,r):= \max\{\lambda(G): G \subseteq \N^{(r)}, |G| = m\}.
	\]
	Using these definitions, $\lambda(G)$ can be expressed as $\max\{w(G): w\text{ is a weighting of $\N$}\}$ and \Cref{conj:TheConj} can be phrased as saying that there exists some weighting $w$ such that $w(\col{m}{r}) = \Lambda(m,r)$. 
	    	
	Let us introduce some more technical notation. 
	For $G \subseteq \N^{(r)}$, let $V(G)$ be the set of elements of $\N$ that are non-isolated vertices of $G$, i.e.\ that appear in some edge of $G$.
	For $S \subseteq V(G)$,
	define $N_G(S):=  \{e \setminus S: e \cup S \in G\}$; whenever $G$ is clear from the context we omit the subscript $G$. We may sometimes abuse notation and write $N(v_1,\ldots,v_s)$ when $S = \{v_1,\ldots,v_s\}$. For $x \in V(G)$, define $G\setminus \{x\}$ to be the hypergraph on vertex set $V(G) \setminus \{x\}$ and edge set $\{e \in G: x \notin e\}$. For vertices $x, y \in V(G)$, we define $N_y(x) := N(x) \setminus \{y\}$. 

	The first lemma we present gives some properties of any maximal weighting of $G$. 

    \begin{lem}[Frankl and R\"{o}dl~\cite{FraRod}]
        \label{lem:frank}
        Let $w$ be a maximal weighting of $G \subseteq \N^{(r)}$.  Then
        \begin{enumerate}
            \item\label{itm:nb-same} 
				For all $x,y \in V(G)$ with $w(x), w(y) > 0$, we have $w(N(x)) = w(N(y)).$
			\item\label{itm:opt-ij} 
				If $x,y \in V(G)$ are such that there is no edge of $G$ containing $\{x,y\}$, then $\lambda(G) \le \max\{\lambda(G \setminus\{x\}), \lambda(G \setminus \{y\})\}$. 
        \end{enumerate}
    \end{lem}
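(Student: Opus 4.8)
The plan is to treat the two parts via standard Lagrange multiplier / perturbation arguments applied to the multilinear function $f(w) = w(G) = \sum_{e \in G}\prod_{x \in e}w(x)$ over the simplex. For part \ref{itm:nb-same}, fix a maximal weighting $w$ and two vertices $x, y \in V(G)$ with $w(x), w(y) > 0$. The key observation is that, since $w(e) = \prod_{z \in e} w(z)$ is multilinear, we can write
\[
	w(G) = w(x)\cdot w(N(x)) + (\text{terms not involving } w(x)),
\]
so $\partial f / \partial w(x) = w(N(x))$, and similarly $\partial f / \partial w(y) = w(N(y))$. Now consider the perturbation that moves a small amount $\delta$ of weight from $x$ to $y$ (legal for $|\delta|$ small since both weights are strictly positive): to first order the change in $w(G)$ is $\delta\big(w(N(y)) - w(N(x))\big)$. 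Since $w$ is maximal this first-order term must vanish for both signs of $\delta$, giving $w(N(x)) = w(N(y))$. (A minor care point: this perturbation keeps us inside the simplex only because $w(x), w(y) > 0$, which is exactly the hypothesis; one should also note $N(x)$ is the link as defined in the preliminaries, and the decomposition above is precisely the statement that $w(G) = w(x)w(N_G(x)) + w(G \setminus \{x\})$, with the second summand independent of $w(x)$.)

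For part \ref{itm:opt-ij}, suppose $x, y \in V(G)$ with no edge of $G$ containing $\{x, y\}$, i.e.\ $w(x)w(y)$ never appears in $f$. Then for the maximal weighting $w$, the function $f$ restricted to the line segment obtained by shifting weight between $x$ and $y$ (keeping all other coordinates fixed) is \emph{affine} in the shift parameter, because no monomial of $f$ contains both $w(x)$ and $w(y)$: every monomial is either independent of this shift or linear in it. An affine function on an interval attains its maximum at an endpoint. The two endpoints correspond to moving all of $w(x)$'s mass onto $y$ (yielding a weighting supported on $V(G) \setminus \{x\}$, hence a weighting of $G \setminus \{x\}$) or all of $w(y)$'s mass onto $x$ (a weighting of $G \setminus \{y\}$). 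Since $w$ was maximal for $G$, one of these endpoint weightings $w'$ satisfies $w'(G) \ge w(G) = \lambda(G)$; but $w'$ is supported on $V(G) \setminus \{x\}$ or $V(G)\setminus\{y\}$ and on such a support $w'(G) = w'(G \setminus \{x\})$ or $w'(G \setminus \{y\})$ respectively, so $\lambda(G) \le \max\{\lambda(G\setminus\{x\}), \lambda(G \setminus \{y\})\}$. Conversely $\lambda(G \setminus \{x\}), \lambda(G\setminus\{y\}) \le \lambda(G)$ trivially, so in fact equality holds, though we only need the stated inequality.

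The only genuinely delicate point — and hence what I'd flag as the main obstacle to a fully rigorous writeup — is making the "affine along the segment" claim airtight, namely verifying that no monomial of $f$ involves both $w(x)$ and $w(y)$ simultaneously; this is immediate from the hypothesis that no edge contains $\{x,y\}$, but one must also handle the endpoint of the segment carefully (the shift parameter ranges over a closed interval of length $w(x) + w(y)$, and at the endpoints one coordinate hits $0$, which is fine since $0$ is allowed). Everything else is routine first-order optimality. I would present part \ref{itm:nb-same} first since its decomposition $w(G) = w(x)w(N(x)) + w(G \setminus \{x\})$ is reused in part \ref{itm:opt-ij}.
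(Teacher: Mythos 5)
Your proof is correct and takes essentially the same perturbation approach as the paper; the only small difference is in part~(ii), where you observe directly that $w(G)$ is affine along the weight-shift segment (so the maximum lies at an endpoint), whereas the paper substitutes $N(x,y)=\emptyset$ into its explicit perturbation formula and then invokes part~(i) to kill the linear term --- your version is a slight streamlining but not a different route.
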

    
    \begin{proof}
		For \ref{itm:nb-same}, suppose, in order to obtain a contradiction, that $w(N(x)) > w(N(y))$. Let $0 < \eps < \min\{w(x),w(y)\}$. Define another weighting $w'$ of $G$ as follows.
		\[
			w'(z) = \left\{
				\begin{array}{ll}
					w(z) & z \neq x,y \\
					w(x) + \eps & z = x \\
					w(y) - \eps & z = y.
				\end{array}
				\right.
		\]
		As we have
		\begin{align*}
    		& w'(N(x)) = w(N(x)) - \eps \cdot w(N(x,y)) \,\,\,\text{ and } \\
    		& w'(N(y)) = w(N(y)) + \eps \cdot w(N(x,y)),
		\end{align*}
    	it follows that
    	\begin{equation}
    	\label{eqn:wvw}
    		w'(G) - w(G) 
			= \eps \cdot \left(w(N(x)) - w(N(y))\right) - \eps^2 \cdot w(N(x,y)).
    	\end{equation}
    	Choosing $\eps$ to be sufficiently small gives $w'(G) - w(G) >0$, contradicting the choice of $w$ as a maximal weighting of $G$. This completes the proof of \ref{itm:nb-same}.
    	
		Suppose, without loss of generality, that $w(x) \ge w(y)$ and that there is no edge containing $x$ and $y$ (and so $N(x,y) = \emptyset$). Define $w'$ as above with $\eps = w(y)$. From \eqref{eqn:wvw} and \ref{itm:nb-same} we see that $w'$ is a maximal weighting of $G$, where $w'(y) = 0$. But defining $G':= G \setminus \{y\}$, we get that $w'$ is a weighting of $G'$ such that $w'(G')  = w'(G)$. This proves \ref{itm:opt-ij}.	
    \end{proof}

	\begin{rem} \label{rem:principal-range}
		Recall that the principal range is the collection of intervals $\left[ \binom{t}{r}, \binom{t+1}{r} - \binom{t-1}{r-2} \right]$, with $t \in \N$. 
		This range is interesting because the $t$-th interval in this range is the set of values $m \in \N$ such that the Lagrangian of the colex graph $\col{m}{r}$ is the same as the Lagrangian of the clique $[t]^{(r)}$. Indeed, for $m$ in this interval, the colex graph $\col{m}{r}$ contains the clique $[t]^{(r)}$, so $\lambda(\col{m}{r}) \ge \lambda([t]^{(r)})$. Also, no edge of $\col{m}{r}$ contains both $t$ and $t+1$, thus by \Cref{lem:frank} $\lambda(\col{m}{r}) \le \lambda([t]^{(r)})$,  and so we have equality. It is easy to check that for $m = \binom{t+1}{r} - \binom{t-1}{r-2} + 1$ (the first value of $m$ outside the $t$-th interval in the principal range), $\lambda(\col{m}{r}) > \lambda([t]^{(r)})$, by considering the weighting $w$ of $[t+1]$ with $w(x) = \frac{1}{t}$ for $x \in [t-1]$ and $w(t+1) = w(t) = \frac{1}{2t}$. 
	\end{rem}

	\begin{cor} \label{cor:frankl}
		Let $w$ be a maximal weighting of $G \subseteq \N^{(r)}$, and let $x,y \in V(G)$ be such that $w(x), w(y) > 0$. Then 
		\begin{equation} \label{eq:frankl}
			w(N(x,y)) \cdot (w(x) - w(y)) = w(N_y(x)) - w(N_x(y)).
		\end{equation} 
	\end{cor}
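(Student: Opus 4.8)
The plan is to exploit the partition of the link of a vertex according to whether or not it contains the other distinguished vertex, and then to invoke \Cref{lem:frank}\ref{itm:nb-same}.

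First I would record the elementary identity
\[
	w(N(x)) = w(N_y(x)) + w(y)\cdot w(N(x,y)),
\]
which holds for any weighting $w$. Indeed, the $(r-1)$-sets $f$ with $f \cup \{x\} \in G$ split into those avoiding $y$ — these are by definition the sets counted in $w(N_y(x))$ — and those containing $y$ — these are exactly the sets of the form $\{y\} \cup g$ with $g \in N(x,y)$, and each such $f$ contributes $w(f) = w(y)\cdot w(g)$ to $w(N(x))$. Summing gives the displayed identity, and swapping the roles of $x$ and $y$ yields the companion identity $w(N(y)) = w(N_x(y)) + w(x)\cdot w(N(x,y))$.

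Next, since $w$ is maximal for $G$ and $w(x), w(y) > 0$, \Cref{lem:frank}\ref{itm:nb-same} gives $w(N(x)) = w(N(y))$. Substituting the two identities above into this equality gives
\[
	w(N_y(x)) + w(y)\cdot w(N(x,y)) = w(N_x(y)) + w(x)\cdot w(N(x,y)),
\]
and collecting the $w(N(x,y))$ terms on one side produces exactly \eqref{eq:frankl}.

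I do not anticipate any genuine obstacle: the statement is a short rearrangement of \Cref{lem:frank}\ref{itm:nb-same}. The only point requiring (minor) care is the bookkeeping in the decomposition identity — in particular, checking that the sets of $N(x)$ containing $y$ biject with $N(x,y)$ via $f \mapsto f \setminus \{y\}$ and that the product weights factor as $w(f) = w(y)\cdot w(f \setminus \{y\})$ accordingly.
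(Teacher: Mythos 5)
Your proof is correct and follows essentially the same route as the paper: decompose $w(N(x))$ as $w(N_y(x)) + w(y)\cdot w(N(x,y))$ (and symmetrically for $y$), then apply \Cref{lem:frank}\ref{itm:nb-same} and rearrange. The paper's proof is the same one-line observation.
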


	\begin{proof}
		Using the relation $w(N(x)) = w(y) \cdot w(N(x,y)) + w(N_y(x))$ and \Cref{lem:frank} \ref{itm:nb-same}, the proof follows.
	\end{proof}
    
    Given \Cref{lem:frank} \ref{itm:nb-same}, it is easy to see that for $G \subseteq \N^{(r)}$, any maximal weighting $w$ of $G$, and any $y \in V(G)$ with $w(y) > 0$, we can write 
	\begin{equation} \label{eqn:easylag}
		w(G) = \frac{1}{r}\sum_{x \in V(G)}w(x)w(N(x)) = \frac{w(N(y))}{r}.
	\end{equation}
    
    In order to state our next preliminary lemma, we should first introduce more definitions. Recall that for $x,y \in \N$ with $x < y$, the \emph{$xy$}-\emph{compression} of $F \in \N^{(r)}$ is defined to be
    \[
    	C_{xy}(F):= 
    		\begin{cases}
    			(F \setminus y) \cup x & \text{if } x \notin F, y \in F,\\
    			F & \text{otherwise}.
    		\end{cases}
	\]
    For $\F \subseteq \N^{(r)}$ we define
	\[
    	C_{xy}(\F):= \Big\{C_{xy}(F): F \in \F\Big\} \,\cup\, \Big\{F \in \F: C_{xy}(F) \in \F\Big\}.
	\] 
    $\F$ is said to be \emph{left-compressed} if $C_{xy}(\F) = \F$ for all $x < y$. 
    
    The next lemma tells us that to find the maximum value of $\lambda(G)$ over all $r$-graphs with $m$ edges, it suffices to consider left-compressed hypergraphs. We say that a weighting $w$ of $\N$ is \emph{decreasing} if $w(i) \ge w(j)$ for all $i < j$. 

    \begin{lem}[Frankl and F\"uredi~\cite{FraFur}]\label{lem:leftcomp}
        Let $G \subseteq \N^{(r)}$. For any decreasing weighting $w$ of $\N$ and any $x,y \in \N$ with $x < y$, we have $w(C_{xy}(G)) \ge w(G)$.
    \end{lem}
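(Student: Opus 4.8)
The plan is to show that each application of a single compression $C_{xy}$ with $x<y$ does not decrease $w(\cdot)$ for a fixed decreasing weighting $w$. Split $G$ into three parts according to the behaviour of an edge $F$ under $C_{xy}$: let $\mathcal{A}$ be the edges with $x\notin F$, $y\in F$ and $(F\setminus y)\cup x\notin G$ (these get genuinely moved); let $\mathcal{B}$ be the edges with $x\notin F$, $y\in F$ and $(F\setminus y)\cup x\in G$ (these stay, and their partners stay); and let $\mathcal{C}$ be all remaining edges of $G$, i.e.\ those with $x\in F$ or $y\notin F$ (these are fixed by $C_{xy}$). By the definition of $C_{xy}(\mathcal{F})$, we have $C_{xy}(G)=\mathcal{C}\cup\mathcal{B}\cup\{(F\setminus y)\cup x : F\in\mathcal{A}\}$, and this is a disjoint union: edges of the form $(F\setminus y)\cup x$ with $F\in\mathcal{A}$ lie neither in $\mathcal{C}$ (they contain $x$ but, since $y\in F$ and $F\in\mathcal{A}\subseteq G$ while we must check $y\notin (F\setminus y)\cup x$ — indeed $y$ is removed) nor in $\mathcal{B}$ (an edge of $\mathcal{B}$ avoids $x$), and they are distinct for distinct $F$. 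Hence
\[
	w(C_{xy}(G)) - w(G) = \sum_{F\in\mathcal{A}} \big( w((F\setminus y)\cup x) - w(F) \big).
\]

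For each $F\in\mathcal{A}$ write $F' := F\setminus\{y\}$, so that $F\notin\ni x$, $y\in F$, and the term is $w(F')\,\big(w(x)-w(y)\big)$ after factoring out $\prod_{z\in F'}w(z)$; more precisely $w((F\setminus y)\cup x) = w(x)\,\prod_{z\in F'}w(z)$ and $w(F) = w(y)\,\prod_{z\in F'}w(z)$, so the summand equals $\big(w(x)-w(y)\big)\prod_{z\in F'}w(z)$. Since $w$ is decreasing and $x<y$ we have $w(x)\ge w(y)$, and all weights are nonnegative, so every summand is $\ge 0$; summing over $F\in\mathcal{A}$ gives $w(C_{xy}(G))\ge w(G)$, as required.

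The only real care needed is bookkeeping: verifying that the map $F\mapsto (F\setminus y)\cup x$ is injective on $\mathcal{A}$ and that its image is disjoint from $\mathcal{B}\cup\mathcal{C}$, so that no edge is double-counted and the cancellation above is exact. Injectivity is immediate since one recovers $F$ as $((F\setminus y)\cup x)\setminus\{x\})\cup\{y\}$. Disjointness from $\mathcal{B}\cup\mathcal{C}$ holds because every image edge contains $x$ (ruling out $\mathcal{B}$ and the part of $\mathcal{C}$ with $y\notin F$ that also avoids $x$) and omits $y$ while being absent from $G$ (ruling out the part of $\mathcal{C}$ containing $x$, which consists of edges of $G$), and one checks that an image edge cannot already lie in $G$ by the very definition of $\mathcal{A}$. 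This is the step I would write out most carefully; the weight estimate itself is a one-line consequence of monotonicity of $w$.
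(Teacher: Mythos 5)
Your proposal is correct and follows essentially the same approach as the paper: both reduce to the identity $w(C_{xy}(G)) - w(G) = \sum (w(x)-w(y))\,w(e\setminus\{y\})$ over the genuinely-moved edges and conclude by monotonicity of $w$. You have simply spelled out the bookkeeping (partition into $\mathcal{A},\mathcal{B},\mathcal{C}$, injectivity, disjointness) that the paper states in one line.
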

    \begin{proof}
    	We have
		\begin{equation} \label{eqn:compress}
    		w(C_{xy}(G)) - w(G) = \sum_{\substack{e \in G\\ C_{xy}(e) \notin G\\x \notin e,\, y \in e}} (w(x) - w(y)) \cdot w(e \setminus \{y\}).
		\end{equation}
    	As $w$ is decreasing, the right-hand side is non-negative. This completes the proof.
    \end{proof}

	By \Cref{lem:leftcomp}, in order to bound $\Lambda(m, r)$ it suffices to consider left-compressed $r$-graphs. In fact, the following lemma shows that every maximiser of the Lagrangian is left-compressed (after disregarding vertices of weight $0$ and relabelling).
	\begin{lem} \label{lem:leftcomp-strong}
		Let $G$ be an $r$-graph that maximises the Lagrangian among $r$-graphs with the same number of edges, and suppose that $w$ is a maximal weighting of $G$ which is decreasing and which satisfies $w(x) > 0$ for every $x \in V(G)$. Then $G$ is left-compressed.
	\end{lem}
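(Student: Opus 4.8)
The plan is to argue by contradiction. Suppose $G$ is not left-compressed, so there are $x<y$ with $H:=C_{xy}(G)\neq G$. Recall that $C_{xy}$ preserves the number of edges, so $|H|=|G|=:m$; we may assume $m\ge 1$, since otherwise $G=\emptyset$ is trivially left-compressed, and then $\lambda(G)=w(G)>0$. Since $G$ maximises the Lagrangian among $r$-graphs with $m$ edges, $\lambda(H)\le\lambda(G)$; on the other hand, \Cref{lem:leftcomp} applied to the decreasing weighting $w$ gives $w(H)=w(C_{xy}(G))\ge w(G)=\lambda(G)$. Chaining these inequalities, $\lambda(H)\ge w(H)\ge\lambda(G)\ge\lambda(H)$, so all four quantities coincide; in particular $w$ is a maximal weighting of $H$ as well, and $w(H)=w(G)$.

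The heart of the argument is to compare the weighted links of the ``offending'' vertex $y$ in $G$ and in $H$. Let $A:=\{F\in G: y\in F,\ x\notin F,\ (F\setminus\{y\})\cup\{x\}\notin G\}$; then $A\neq\emptyset$ because $C_{xy}(G)\neq G$, and $E(H)=(E(G)\setminus A)\cup\{(F\setminus\{y\})\cup\{x\}: F\in A\}$. Every edge added in passing from $G$ to $H$ avoids $y$, whereas every edge of $A$ that is deleted contains $y$; hence the link of $y$ only shrinks: $N_H(y)=N_G(y)\setminus\{F\setminus\{y\}: F\in A\}$, where $\{F\setminus\{y\}:F\in A\}$ is a nonempty subfamily of $N_G(y)$ disjoint from $N_H(y)$. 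Since each $F\in A$ is an edge of $G$ and $w$ is positive on $V(G)$, every term $w(F\setminus\{y\})=\prod_{v\in F\setminus\{y\}}w(v)$ is strictly positive, so
\[ w(N_G(y)) \;=\; w(N_H(y)) \;+\; \sum_{F\in A} w(F\setminus\{y\}) \;>\; w(N_H(y)). \]
Applying \eqref{eqn:easylag} at $y$ for $G$ and for $H$ then gives $w(G)=\tfrac1r\,w(N_G(y))>\tfrac1r\,w(N_H(y))=w(H)$, contradicting $w(G)=w(H)$; this would complete the proof.

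The only points that need care --- and where I expect the (minor) technical friction to lie --- are the hypotheses needed to apply \eqref{eqn:easylag} at $y$. For $G$ this is immediate: $y$ lies in every edge of $A$, so $y\in V(G)$, and hence $w(y)>0$ by hypothesis. For $H$ we additionally need $y\in V(H)$, and here I would first record the standard fact that a maximal weighting of an $r$-graph with at least one edge is supported on its set of non-isolated vertices --- if an isolated vertex carried positive weight, moving that weight onto a vertex of a positive-weight edge would strictly increase the Lagrangian. Applying this to $H$ (which has $m\ge1$ edges and for which $w$ is maximal), together with $w(y)>0$, forces $y\in V(H)$. Everything else is routine bookkeeping about the compression operator, and the substantive content is just the short link comparison above.
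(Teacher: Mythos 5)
Your proof is correct and takes essentially the same route as the paper: both deduce from \Cref{lem:leftcomp} and the maximality of $\lambda(G)$ that $w$ is also a maximal weighting of $C_{xy}(G)$, and then derive a contradiction from the fact that the compression strictly shrinks the link of $y$ by sets of positive weight. The paper phrases the contradiction as $w(N_{G'}(x)) > w(N_{G'}(y))$ violating \Cref{lem:frank}~(i), whereas you phrase it via \eqref{eqn:easylag} as $w(G) > w(C_{xy}(G))$; these are the same observation (and your explicit check that $y \in V(H)$ is a nice touch that the paper leaves implicit).
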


	\begin{proof}
		Suppose that $G$ is not left-compressed. Then there exist vertices $x < y$ such that $G' := C_{xy}(G) \neq G$. By \Cref{lem:frank} \ref{itm:nb-same}, $w(N_G(x)) = w(N_G(y))$. Moreover, by the assumption on $x$ and $y$,  $N_{G'}(y) \subsetneqq N_G(y)$ and $N_{G'}(x) \supsetneqq N_G(x)$, thus $w(N_{G'}(x)) > w(N_{G'}(y))$. 
		But, by \Cref{lem:leftcomp}, $w(G') \ge w(G)$, and thus by maximality of $G$, $w$ is a maximal weighting of $G'$, so the inequality $w(N_{G'}(x)) > w(N_{G'}(y))$ contradicts \Cref{lem:frank} \ref{itm:nb-same}.
	\end{proof}

    We now collect some simple deductions about Lagrangians that are used throughout the paper.
	\begin{lem}
	\label{lem:lagclique}
		Let $r \ge 3$. The following statements hold.
		\begin{enumerate}
			\item \label{itm:lag-clique} 
				$\lambda([t]^{(r)}) = \frac{1}{t^r} \binom{t}{r} =  \frac{1}{r!}\left(1 - \frac{(r-1)r}{2t} +  O(t^{-2})\right)$ for $t \in \N$.
			\item \label{itm:clique-diff} 
				For $s = O(t)$, we have $\left|\lambda([t + s]^{(r)}) - \lambda([t]^{(r)})\right| = O\left(\frac{s}{t^2}\right)$.
			\item \label{itm:easy-ub} 
				$\lambda(G) \le 1/r!$ for every $r$-graph $G$.
		\end{enumerate}
	\end{lem}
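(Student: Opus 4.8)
The plan is to prove each of the three parts more or less independently, starting from the basic fact that the Lagrangian of a complete $r$-graph is achieved by the uniform weighting.

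\textbf{Part \ref{itm:lag-clique}.} First I would establish that $\lambda([t]^{(r)})$ is attained by the uniform weighting $w(x) = 1/t$ for $x \in [t]$. This follows from \Cref{lem:frank}~\ref{itm:nb-same}: if $w$ is a maximal weighting of $[t]^{(r)}$ with full support (which we may assume, since adding an isolated vertex cannot help and by symmetry we may assume all positive weights are on $[t]$), then $w(N(x)) = w(N(y))$ for all $x, y \in [t]$. Because $[t]^{(r)}$ is vertex-transitive, the symmetrized weighting $\tilde w(x) = 1/t$ satisfies $\tilde w([t]^{(r)}) \ge w([t]^{(r)})$ by convexity / the AM-GM-type averaging argument (or one can simply invoke that the unique critical point respecting the symmetry is the uniform one). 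Alternatively, and perhaps more cleanly, I would just verify directly that the uniform weighting gives $\binom{t}{r} t^{-r}$ and then argue optimality: for any weighting $w$ on $[t]$, $w([t]^{(r)}) = e_r(w(1), \dots, w(t))$ where $e_r$ is the $r$-th elementary symmetric polynomial, and by Maclaurin's inequality $e_r(w_1,\dots,w_t) \le \binom{t}{r}\big(e_1/t\big)^r = \binom{t}{r} t^{-r}$ since $e_1 = \sum w_i = 1$. This gives the first equality. For the asymptotic expansion, I would write $\binom{t}{r} t^{-r} = \frac{1}{r!} \prod_{j=0}^{r-1}\big(1 - \tfrac{j}{t}\big)$ and expand the product: $\prod_{j=0}^{r-1}(1 - j/t) = 1 - \tfrac{1}{t}\sum_{j=0}^{r-1} j + O(t^{-2}) = 1 - \tfrac{(r-1)r}{2t} + O(t^{-2})$, which is exactly the stated form.

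\textbf{Part \ref{itm:clique-diff}.} Here I would use Part \ref{itm:lag-clique} and treat both cases $s \ge 0$ and $s < 0$ (since $s = O(t)$ allows either sign, and $[t+s]^{(r)}$ only makes sense for $t + s \ge r$, which holds for large $t$). From $\lambda([t]^{(r)}) = \frac{1}{r!}\big(1 - \tfrac{(r-1)r}{2t} + O(t^{-2})\big)$, I get
\[
\lambda([t+s]^{(r)}) - \lambda([t]^{(r)}) = \frac{1}{r!}\left( \frac{(r-1)r}{2t} - \frac{(r-1)r}{2(t+s)} \right) + O(t^{-2}) = \frac{(r-1)}{2(r-1)!} \cdot \frac{s}{t(t+s)} + O(t^{-2}).
\]
Since $s = O(t)$, we have $t + s = \Theta(t)$, so the main term is $O(s/t^2)$, and the error term $O(t^{-2})$ is also $O(s/t^2)$ as long as $s = \Omega(1)$; for $s = O(1)$ one should be slightly more careful, but then $|\lambda([t+s]^{(r)}) - \lambda([t]^{(r)})| = O(t^{-2}) = O(s/t^2)$ still holds since $s \ge 1$ in the nontrivial case and the statement is trivial for $s = 0$. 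I would just remark that the $O(t^{-2})$ error is absorbed since we may assume $s \ge 1$.

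\textbf{Part \ref{itm:easy-ub}.} This is immediate: for any $r$-graph $G$ and any weighting $w$, $w(G) = \sum_{e \in G} \prod_{x \in e} w(x) \le e_r(w_1, w_2, \dots) \le \frac{1}{r!} (\sum w_i)^r = \frac{1}{r!}$, where the first inequality holds because $G$ is a subgraph of the complete $r$-graph on its vertex set and the second is Maclaurin (or just the crude bound $e_r(w) \le \frac{1}{r!}(e_1(w))^r$ valid for nonnegative reals, which follows from AM-GM or induction). Taking the max over $w$ gives $\lambda(G) \le 1/r!$.

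\textbf{Main obstacle.} None of these is genuinely hard; the only place requiring a little care is justifying the optimality of the uniform weighting in Part \ref{itm:lag-clique} rigorously and cleanly — i.e.\ deciding whether to invoke Maclaurin's inequality, a symmetrization argument via \Cref{lem:frank}, or a direct Lagrange-multiplier computation. I would go with the Maclaurin/AM-GM route since it simultaneously handles Part \ref{itm:easy-ub}. The asymptotic bookkeeping in Parts \ref{itm:lag-clique} and \ref{itm:clique-diff} is routine Taylor expansion.
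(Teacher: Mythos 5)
Your proof is correct but takes a genuinely different route for parts \ref{itm:lag-clique} and \ref{itm:easy-ub}. For \ref{itm:lag-clique}, the paper applies \Cref{lem:frank}~\ref{itm:nb-same} and then \Cref{cor:frankl}: since any two vertices $x,y$ of $[t]^{(r)}$ are ``twins'' in the sense that $N_x(y)=N_y(x)$, identity \eqref{eq:frankl} forces $w(x)=w(y)$ for every maximal weighting $w$, so the optimum is uniform; for \ref{itm:easy-ub} the paper then chooses $t$ with $G\subseteq[t]^{(r)}$ and bounds $\lambda(G)\le\lambda([t]^{(r)})<1/r!$ by part \ref{itm:lag-clique}. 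You instead invoke Maclaurin's inequality $e_r(w)\le\binom{t}{r}\left(e_1(w)/t\right)^r$, which handles \ref{itm:lag-clique} and \ref{itm:easy-ub} in one stroke, is fully self-contained, and sidesteps the small technical point that \Cref{lem:frank}~\ref{itm:nb-same} is only stated for vertices of positive weight. The paper's route stays inside the machinery it has already built and records a structural fact (twins have equal weight) that reappears later; your route is more elementary and independent. One caveat worth flagging: your first sketch for \ref{itm:lag-clique} via symmetrization (``the unique critical point respecting the symmetry is the uniform one'') is not, as stated, a proof, since uniqueness among symmetric critical points does not by itself rule out an asymmetric maximizer -- the correct version of that idea is exactly Schur-concavity of $e_r$, i.e.\ Maclaurin -- but since you explicitly commit to the Maclaurin argument this does not cause a gap. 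Part \ref{itm:clique-diff} and the asymptotic expansion match the paper's computation, with your extra remark about absorbing the $O(t^{-2})$ error when $s\ge 1$ being a harmless precaution.
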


	\begin{proof}
		Let $w$ be a maximal weighting of $[t]^{(r)}$. By \Cref{lem:frank} \ref{itm:nb-same}, we have $w(N(x)) = w(N(y))$ for all $x,y \in [t]$. By \Cref{cor:frankl}, as $N_x(y) = N_y(x)$ for every $x,y \in [t]$ in $[t]^{(r)}$, we have $w(x) = w(y)$ for all $x,y \in [t]$. Hence every vertex has weight $t^{-1}$.
		So 
		\[
			\lambda([t]^{(r)}) 
			= \frac{1}{t^r} \binom{t}{r} 
			= \frac{1}{r!}\left(1 - \frac{(r-1)r}{2t} +   O(t^{-2})\right),	
		\]
		as required for \ref{itm:lag-clique}.
		
		Now for \ref{itm:clique-diff}, using \ref{itm:lag-clique} we have 
		\[
			\left|\lambda([t + s]^{(r)}) - \lambda([t]^{(r)})\right| = \frac{1}{r!} \left( \frac{(r-1)r}{2t} - \frac{(r-1)r}{2(t + s)} + O(t^{-2})\right) = O\left(\frac{s}{t^2}\right),
		\]
		as required. 
		
		For \ref{itm:easy-ub}, let $t \in \N$ be such that $G \subseteq [t]^{(r)}$, and let $w$ be a maximal weighting of $G$. Then 
		\[
			\lambda(G) = w(G) \le w([t]^{(r)}) \le \lambda([t]^{(r)}),
		\]
		which is less than $1/r!$ by \ref{itm:lag-clique}. 	
	\end{proof}

\section{Bounding the support of $G$}\label{sec:supp}

	The aim of this section is to show that every $r$-graph that maximises the Lagrangian among $r$-graphs with $m \le \binom{t}{r}$ edges has at most $t$ vertices with non-zero weight.
	The main result of the section is the following theorem.
	
	\begin{thm}\label{thm:t-vs} 
		Let $r \ge 3$ and let $t \in \N$ be sufficiently large. Suppose that $G \subseteq \N^{(r)}$ is an $r$-graph with $\binom{t-1}{r} \le |G| \le \binom{t}{r}$ that maximises the Lagrangian among $r$-graphs of the same size, and suppose that $w$ is a maximal weighting of $G$ which is decreasing and satisfies $w(x) > 0$ for every non-isolated vertex of $G$. Then $G \subseteq [t]^{(r)}$.            
	\end{thm}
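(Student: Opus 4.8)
The plan is to argue by contradiction: suppose $G$ has $n>t$ non-isolated vertices, all of positive weight under the decreasing maximal weighting $w$, and derive a contradiction by showing that $w(G)$ must be strictly smaller than it could be, i.e.\ that $G$ is not in fact a maximiser. As indicated in the proof sketch, the strategy has three stages. \textbf{Stage 1: coarse control.} First I would establish that every vertex has weight $O(1/t)$. The cleanest route: if some vertex $x$ had weight $\Omega(1/t)$ above the average — say $w(x)\ge C/t$ for large $C$ — then since $w(G)=\lambda(G)\ge\lambda([t-1]^{(r)})=\tfrac{1}{r!}(1-O(1/t))$ by \Cref{lem:lagclique}\ref{itm:lag-clique} and the hypothesis $|G|\ge\binom{t-1}{r}$, and since by \eqref{eqn:easylag} $w(G)=w(N(x))/r$, we would get $w(N(x))=r\lambda(G)$ close to $1/(r-1)!$; but $w(N(x))\le\lambda([n]^{(r-1)})$ applied to the link, and more carefully, removing the weight of $x$ and rescaling the remaining $\ge 1-C/t$ mass over a clique bounds things — so a weight that is too concentrated forces $w(G)$ below $\lambda([t-1]^{(r)})$. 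From $w(x)=O(1/t)$ for all $x$, and $w(N(x))=r\lambda(G)=\Omega(1)$, one deduces $|N(x)|=\Omega(t^{r-1})$ for every non-isolated $x$; summing degrees, $rm=\sum_x|N(x)|=\Omega(n\cdot t^{r-1})$ while $m\le\binom{t}{r}=O(t^r)$, giving $n=O(t)$. (This is \Cref{prop:bigO}.)

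\textbf{Stage 2: the key step — few light vertices.} Next I would show that all but $O(1)$ vertices have weight $\Omega(1/t)$. Let $S$ be the set of the $k$ lightest vertices, for a suitable constant $k$, and suppose for contradiction that every vertex of $S$ has weight $o(1/t)$. Form $G'$ by deleting from $G$ every edge meeting $S$ in at least two vertices. On one hand, $w(G)-w(G')=\sum_{|e\cap S|\ge 2}w(e)\le \binom{k}{2}\cdot (o(1/t))^2\cdot\big(\max_{x}w(x)\big)^{r-2}\cdot(\text{number of such edges})$, and since each term has two factors of size $o(1/t)$ and $r-2$ factors of size $O(1/t)$, with $O(t^{r-2})$ choices of the remaining vertices, this difference is $o(t^{-2})\cdot O(t^{r-2})\cdot O(t^{-(r-2)})=o(t^{-2})$ — small. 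On the other hand, $G'$ has at least $\Omega(t^{r-1})$ fewer edges (since each vertex of $S$ had degree $\Omega(t^{r-1})$ by Stage 1, and enough of that degree uses a second vertex of $S$ once $|S|$ is a large enough constant — this needs a short argument, perhaps choosing $k$ large and using that the link of a light vertex is itself dense), so by \Cref{lem:bigdiff} (a lemma stated later that I may invoke) $\lambda(G)-\lambda(G')=\Omega(t^{-2})$; but $\lambda(G')\ge w(G')\ge w(G)-o(t^{-2})=\lambda(G)-o(t^{-2})$, a contradiction. This is the part I expect to be the main obstacle: getting the bookkeeping right so that the edge-count drop in $G'$ genuinely beats the weight loss, which forces a careful choice of the constant $|S|$ and a lower bound on how much of a light vertex's link meets $S$.

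\textbf{Stage 3: closing the gap via the complement.} Now assume $n>t$. Consider $\overline{G}:=[n]^{(r)}\setminus G$, which has $\binom{n}{r}-m$ edges. I would first bound $n$ from above more precisely (e.g.\ $n\le t+O(1)$) using Stage 2 and a weight-counting argument, or alternatively use \Cref{lem:frank}\ref{itm:opt-ij}: since every vertex has positive weight and $G$ is a maximiser, no two vertices can have \emph{all} pairs of edges missing, which already forces $\binom{n}{r}-m$ to be not too large relative to $n$. The point is that no vertex has very large degree in $\overline{G}$ — in fact each has degree $O(t^{r-2})$ in $\overline{G}$ if $n=t+O(1)$, since $|\overline{G}|=\binom{n}{r}-m\le\binom{t+O(1)}{r}-\binom{t-1}{r}=O(t^{r-1})$ — so Stage 2 applied to $\overline{G}$ (with its own maximal weighting, or with $w$ restricted and rescaled) shows $w(\overline{G})=\Omega(1)\cdot(\text{something})$; more precisely, $w(\overline{G})$ is of order the number of its edges times $t^{-r}$, which is $\Omega(t^{-1})$ when $n>t$. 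Then
\[
	\lambda(G)=w(G)\le w([n]^{(r)})-w(\overline{G})\le\lambda([n]^{(r)})-w(\overline{G})\le\lambda([t-1]^{(r)})+O(t^{-1})\cdot\text{(small)}-\Omega(t^{-1}),
\]
and by \Cref{lem:lagclique}\ref{itm:clique-diff} the first two terms differ from $\lambda([t-1]^{(r)})$ by $O((n-t)/t^2)=O(1/t)$, which is dominated by the $\Omega(t^{-1})$ we subtract once $n>t$ — wait, these are the same order, so in fact one needs $\lambda([n]^{(r)})-\lambda([t-1]^{(r)})$ to be genuinely smaller than $w(\overline{G})$; the resolution is that when $n\ge t+1$, $\overline{G}$ has at least $\binom{t+1}{r}-\binom{t}{r}=\binom{t}{r-1}=\Theta(t^{r-1})$ edges, giving $w(\overline{G})=\Theta(t^{-1})$ with a constant strictly larger than the $\Theta(t^{-1})$ gap coming from \Cref{lem:lagclique}\ref{itm:clique-diff}, because the latter's constant involves only the difference $\binom{n}{r}-\binom{t-1}{r}$ spread over $[n]^{(r)}$'s weighting which is less efficient. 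Making this constant comparison precise — i.e.\ showing the weight lost in the complement strictly exceeds the slack gained by enlarging the clique — is the crux of Stage 3, and I would handle it by comparing $w(G)$ directly to $\lambda([t]^{(r)})$ (not $\lambda([t-1]^{(r)})$) using $m\le\binom{t}{r}$, so that the target is $\lambda(G)\le\lambda([t]^{(r)})$ with equality only if $G=[t]^{(r)}$, and then $w(\overline{G})>0$ of the right order gives the contradiction cleanly.
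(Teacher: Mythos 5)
Your Stage 1 matches the paper's \Cref{prop:bigO} and is fine. The two gaps you flag in Stages 2 and 3 are both genuine, and the way the paper closes them is different from what you suggest.

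In Stage 2, you propose showing that ``enough of [a light vertex's] degree uses a second vertex of $S$''. There is no reason for this to hold: a light vertex could in principle meet $S$ in no edges at all, so you cannot directly lower-bound the number of edges with $|e\cap S|\ge 2$. The paper sidesteps this entirely. After forming $G'$ (no edges with two vertices in $S$), it applies \Cref{lem:frank}\ref{itm:opt-ij} iteratively: in $G'$ every pair of vertices in $S$ is co-degree zero, so one may delete $|S|-1$ of them without decreasing the Lagrangian, obtaining $G''=G\setminus U$ with $\lambda(G')=\lambda(G'')$. Now the edge count drop $|G|-|G''|$ is computed for \emph{vertex deletion} (each deleted vertex has degree $\ge\rho t^{r-1}$ by Stage 1), so choosing $|S|=\lceil 2\alpha/\rho\rceil+1$ yields $|G|-|G''|\ge\alpha t^{r-1}$ and \Cref{lem:bigdiff} applies. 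Your direct counting of two-$S$-vertex edges would not give this. You also omit the degree-deviation estimate $\big||N(x)|-|N(y)|\big|=O(t^{r-2})$ (the paper's \Cref{lem:almost-done}\ref{itm:nbdiff}, proved via \Cref{cor:frankl}), which is essential for Stage 3.

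In Stage 3 you miscalculate the orders and then try to fix a problem that does not exist. With $n=t+s$ (and $s=O(t)$ by Stage 1) one has $|\overline G|=\Theta(s\,t^{r-1})$, so once you know that a constant fraction of the $r$-sets in $\overline G$ have all vertices of weight $\ge\beta/t$, you get $w(\overline G)=\Omega(s\cdot t^{r-1})\cdot(\beta/t)^r=\Omega(s/t)$, whereas $\lambda([n]^{(r)})-\lambda([t-1]^{(r)})=O\bigl((s+1)/t^2\bigr)=O(s/t^2)$ by \Cref{lem:lagclique}\ref{itm:clique-diff}. These are \emph{not} the same order --- the latter is smaller by a factor of $t$ --- so no delicate constant comparison is needed, and switching the target from $\lambda([t-1]^{(r)})$ to $\lambda([t]^{(r)})$ buys nothing. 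The genuinely missing ingredient is how to show that most edges of $\overline G$ have all vertices heavy: your ``apply Stage 2 to $\overline G$'' suggestion does not work because $\overline G$ is not a Lagrangian maximiser. The paper instead argues that if at least half of $\overline G$ consists of $r$-sets containing a light vertex, then by pigeonhole one of the $O(1)$ light vertices lies in $\Omega(s\,t^{r-1})$ sets of $\overline G$, whence by the degree-deviation estimate \Cref{lem:almost-done}\ref{itm:nbdiff} \emph{every} vertex does, giving $|\overline G|=\Omega(s\,t^r)$, which contradicts $|\overline G|=O(s\,t^{r-1})$. Without \Cref{lem:almost-done}\ref{itm:nbdiff} your Stage 3 does not close.
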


	In the next subsection we give a couple of easy corollaries of \Cref{thm:t-vs}. In \Cref{subsec:prelim-t-vs} we build up towards the proof of \Cref{thm:t-vs}, by giving a sequence of three preparatory results. The proof of \Cref{thm:t-vs} then follows quite easily, and is given in \Cref{subsec:proof-t-vs}.

	\subsection{Easy corollaries}

		Using \Cref{lem:leftcomp-strong}, we obtain the following corollary.
		\begin{cor} \label{cor:t-vs-compressed}
			Let $r \ge 3$, let $t \in \N$ be sufficiently large, and let $m$ satisfy $\binom{t}{r} - \binom{t-2}{r-2} < m \le \binom{t}{r}$. Suppose that $G$ maximises the Lagrangian among $r$-graphs with $m$ edges. Then $G$ is isomorphic to a left-compressed subgraph of $[t]^{(r)}$ with $m$ edges.
		\end{cor}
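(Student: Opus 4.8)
The plan is to deduce \Cref{cor:t-vs-compressed} from \Cref{thm:t-vs} together with \Cref{lem:leftcomp-strong}. Both of those inputs ask, beyond $G$ being a maximiser, for a \emph{decreasing} maximal weighting of $G$ that is positive on every non-isolated vertex, so the substance of the proof is to show that such a weighting always exists. Once it does, one relabels the non-isolated vertices so that the weighting is decreasing, notes that
\[
	\binom{t-1}{r} \;\le\; \binom{t}{r}-\binom{t-2}{r-2} \;<\; m \;\le\; \binom{t}{r}
\]
(the leftmost inequality because $\binom{t}{r}-\binom{t-1}{r}=\binom{t-1}{r-1}\ge\binom{t-2}{r-2}$), and applies \Cref{thm:t-vs} to get $G\subseteq[t]^{(r)}$ and \Cref{lem:leftcomp-strong} to get that $G$ is left-compressed; since $|G|=m$ this is exactly the assertion.

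To supply the weighting I would establish the following strengthening of the trivial monotonicity of $\Lambda(\cdot,r)$: for every $m$ in the stated range and every large $t$, $\Lambda(m,r)>\Lambda(m-1,r)$. This is all that is needed, because if a maximiser $G$ on $m$ edges had a maximal weighting $w$ with $w(v)=0$ for some non-isolated $v$, then every edge of $G$ through $v$ would have zero $w$-weight, so $G\setminus\{v\}$ would have fewer than $m$ edges and $\lambda(G\setminus\{v\})=w(G\setminus\{v\})=w(G)=\Lambda(m,r)$, giving $\Lambda(m-1,r)\ge\Lambda(|G\setminus\{v\}|,r)\ge\Lambda(m,r)$, contradicting the strict inequality; hence every maximal weighting of $G$ is positive on $V(G)$, and we may pick one and make it decreasing by relabelling. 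I would prove $\Lambda(m,r)>\Lambda(m-1,r)$ by induction on $m$. For the base case $m_0:=\binom{t}{r}-\binom{t-2}{r-2}+1$, \Cref{thm:main1} and \Cref{rem:principal-range} give $\Lambda(m_0-1,r)=\lambda(\col{m_0-1}{r})=\lambda([t-1]^{(r)})$, while the weighting displayed in \Cref{rem:principal-range} witnesses $\Lambda(m_0,r)\ge\lambda(\col{m_0}{r})>\lambda([t-1]^{(r)})$. For the inductive step, assume $\Lambda(m-1,r)>\Lambda(m-2,r)$; the same argument, applied with $m-1$ in place of $m$ and using this inductive hypothesis, shows that every maximiser $G^*$ on $m-1$ edges has a maximal weighting $w^*$ positive on $V(G^*)$. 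Since $|G^*|=m-1>\binom{t-1}{r}$ the graph $G^*$ has at least $t$ vertices, and since $m-1<\binom{t}{r}\le\binom{|V(G^*)|}{r}$ there is an $r$-set $f\subseteq V(G^*)$ with $f\notin G^*$. Then $G^*\cup\{f\}$ has $m$ edges and
\[
	\lambda(G^*\cup\{f\})\;\ge\;w^*(G^*\cup\{f\})\;=\;\lambda(G^*)+w^*(f)\;>\;\lambda(G^*)\;=\;\Lambda(m-1,r),
\]
because $w^*(f)=\prod_{x\in f}w^*(x)>0$; hence $\Lambda(m,r)>\Lambda(m-1,r)$.

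I expect the positivity step to be the only genuine difficulty; everything else (checking $m$ lies in the scope of \Cref{thm:t-vs} and invoking the two cited results) is routine. That some such argument is needed is shown by the $r=2$ case, where the analogous conclusion is false: the perfect matching on four vertices is a Lagrangian maximiser among $2$-graphs with two edges but is not isomorphic to any left-compressed graph. So an input special to $r\ge3$ and large $m$ has to enter, and here it is \Cref{thm:main1} at the top of the principal range, propagated through the induction.
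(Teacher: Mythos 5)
Your proof is correct, and the substantive part of it --- establishing that a maximiser $G$ on $m$ edges admits a maximal weighting which is positive on all of $V(G)$ before invoking \Cref{thm:t-vs} and \Cref{lem:leftcomp-strong} --- addresses a point that the paper's own proof glosses over: both of those inputs carry the positivity hypothesis, and the paper applies \Cref{thm:t-vs} first and only afterwards argues that the support of $w$ is exactly $[t]$, which as written is circular. Your $r=2$ matching example is a good sanity check that some such input is genuinely required. The induction you set up for $\Lambda(m,r)>\Lambda(m-1,r)$ is sound (the base case at $m_0-1=\binom{t}{r}-\binom{t-2}{r-2}$ via \Cref{thm:main1} and \Cref{rem:principal-range}, the step via adding a missing $r$-set inside $V(G^*)$), and there is no hidden circularity in the forward reference to \Cref{thm:main1}, since the proof of \Cref{thm:clique} does not use \Cref{cor:t-vs-compressed}.

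That said, your route is heavier than necessary and pulls in \Cref{thm:main1}, which the paper's ordering clearly intends to avoid at this point. A lighter patch along the paper's own lines: let $V_0$ be the support of $w$ and $G_0:=G\cap V_0^{(r)}$; then $w|_{V_0}$ is a positive maximal weighting of $G_0$, and $G_0$ maximises the Lagrangian among $r$-graphs with $|G_0|$ edges, so \Cref{thm:t-vs} (applied with the appropriate $t'$) gives $V_0\subseteq[t']$ for some $t'\le t$. Since $\lambda(G_0)=\lambda(G)\ge\lambda(\col{m}{r})>\lambda([t-1]^{(r)})$ by \Cref{rem:principal-range}, one must have $t'=t$, and the paper's argument then forces $V_0=[t]$. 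If $V(G)\neq V_0$, then $|G_0|<m\le\binom{t}{r}$, so some $f\in[t]^{(r)}\setminus G$ has $w(f)>0$, while some $e\in G\setminus G_0$ has $w(e)=0$; swapping $e$ for $f$ produces an $r$-graph with $m$ edges and weight strictly exceeding $\Lambda(m,r)$, a contradiction. This closes the gap using only ingredients already available in Section~4.
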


		\begin{proof}
			Let $w$ be a maximal weighting of $G$. Without loss of generality, $G \subseteq \N^{(r)}$ and $w$ is decreasing. By \Cref{thm:t-vs}, $G \subseteq [t]^{(r)}$. By maximality of $\lambda(G)$, we have $\lambda(G) \ge \lambda(\col{m}{r}) > \lambda([t-1]^{(r)})$ (for the second inequality, see \Cref{rem:principal-range}). It follows that the support of $w$ is exactly $[t]$ (i.e.\ $w(t) > 0$), because otherwise $w(G) \le \lambda([t-1]^{(r)})$, a contradiction. By the maximality assumption, $G$ consists of exactly $m$ edges in $[t]^{(r)}$, and by \Cref{lem:leftcomp-strong}, $G$ is left-compressed. 
		\end{proof}

		\Cref{cor:t-vs-compressed} immediately implies \Cref{conj:TheConj} in the principal case, and also when the number of edges is $1$ or $2$ below a principal case.

		\begin{obs}
			Let $r \ge 3$, let $t \in \N$ sufficiently large, let $a \in \{0,1,2\}$ and set $m:= \binom{t}{r} - a$. Suppose that $G$ is an $r$-graph that maximises the Lagrangian among $r$-graphs with $m$ edges. By \Cref{cor:t-vs-compressed}, $G$ is isomorphic to a left-compressed subgraph of $[t]^{(r)}$ on $m$ edges. Since there is only one such graph, namely $\col{m}{r}$, it follows that $G \simeq \col{m}{r}$.
		\end{obs}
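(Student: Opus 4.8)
The statement already sketches the route, so the plan is simply to supply the one verification it leaves implicit, namely that the left-compressed subgraph of $[t]^{(r)}$ with $m$ edges is unique. First I would check that for $t$ large and $m = \binom{t}{r} - a$ with $a \in \{0,1,2\}$ we are in the range of \Cref{cor:t-vs-compressed}: indeed $\binom{t}{r} - \binom{t-2}{r-2} < m \le \binom{t}{r}$ because $\binom{t-2}{r-2} \ge 3$. Hence \Cref{cor:t-vs-compressed} gives $G \simeq G_0$ for some left-compressed $G_0 \subseteq [t]^{(r)}$ with $|G_0| = m$. Since the first $\binom{t}{r}$ sets of colex are exactly $[t]^{(r)}$, the colex graph $\col{m}{r}$ is itself a left-compressed subgraph of $[t]^{(r)}$ with $m$ edges, so it remains only to prove $G_0 = \col{m}{r}$.

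To do this I would pass to complements. Put $\overline{G_0} := [t]^{(r)} \setminus G_0$, a family of $a \in \{0,1,2\}$ sets. Writing each $r$-subset of $[t]$ in increasing order and ordering them by the coordinatewise partial order $\preceq$ (so $e \preceq f$ iff the $i$-th smallest element of $e$ is at most the $i$-th smallest element of $f$ for every $i$), the family $G_0$ is left-compressed in $[t]^{(r)}$ precisely when it is a down-set of $([t]^{(r)}, \preceq)$, equivalently when $\overline{G_0}$ is an up-set. For $a = 0$ this forces $G_0 = [t]^{(r)} = \col{m}{r}$. For $a \ge 1$, since $\{t-r+1, \dots, t\}$ is the maximum of $([t]^{(r)},\preceq)$, every nonempty up-set contains it; and I would check that its unique lower cover is $\{t-r\} \cup \{t-r+2, \dots, t\}$ (decreasing any coordinate of $\{t-r+1,\dots,t\}$ other than the smallest one creates a repeated value, so is not a set). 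Consequently the only up-set of size $1$ is $\{\{t-r+1,\dots,t\}\}$ and the only up-set of size $2$ is $\{\{t-r+1,\dots,t\},\ \{t-r\} \cup \{t-r+2,\dots,t\}\}$. In both cases $\overline{G_0}$ is exactly the family of the $a$ largest $r$-subsets of $[t]$ in colex, so $G_0 = \col{m}{r}$ and therefore $G \simeq \col{m}{r}$.

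The only point needing any care — the "main obstacle", though a light one — is this last combinatorial identification of the up-sets of size at most $2$ in the coordinatewise order, i.e. that the maximum element has a unique lower cover. One should also keep in mind that $t$ must be large enough that $t-r \ge 1$ so that the sets named above genuinely lie in $[t]^{(r)}$, which is already guaranteed by the hypothesis. Everything else is immediate from \Cref{cor:t-vs-compressed}.
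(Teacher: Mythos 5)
Your proof is correct and follows the same route as the paper, which merely asserts that the left-compressed $m$-edge subgraph of $[t]^{(r)}$ is unique without verification. You supply the missing check, via the standard identification of left-compressed families with down-sets in the coordinatewise domination order and the observation that the maximum $r$-set $\{t-r+1,\ldots,t\}$ has a unique lower cover, which is exactly what makes the argument stop at $a \le 2$.
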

	\subsection{Preparation for the proof of \Cref{thm:t-vs}} \label{subsec:prelim-t-vs}
		
		We begin our preparation for the proof of \Cref{thm:t-vs} by proving some simple facts about $r$-graphs that maximise the Lagrangian. We note that statements similar to \ref{itm:vx-bound} and \ref{itm:bigOt} were also proved in \cite{Tyo} (in Section 3). Recall that $\Lambda(m, r) = \max\{\lambda(G) : \text{$G$ is an $r$-graph with $m$ edges}\}$.	

		\begin{prop}\label{prop:bigO}
			Let $r \ge 3$, let $t \in \N$ be sufficiently large and let $m$ satisfy $\binom{t-1}{r} \le m \le \binom{t}{r}$.  
			Let $G \subseteq \N^{(r)}$ and let $w$ be a weighting of $G$ such that $|G| \le m$, $w(G) = \lambda(G) = \Lambda(m, r)$, and $w(x) > 0$ for every $x \in V(G)$. Then the following properties hold, where $\rho, \kappa > 0$ are constants that depend only on $r$.
			\begin{enumerate}
				\item\label{itm:vx-bound} 
					$w(x) = O(t^{-1})$ for every $x \in V(G)$.
				\item\label{itm:nb-bound} 
					$|N(x)| \ge  \rho t^{r-1}$ for all $x \in V(G)$.
				\item\label{itm:bigOt} $|V(G)| \le \kappa t$.
				\item\label{itm:many-vx-big} 
					$\Omega(t)$ vertices of $G$ have weight $\Omega(t^{-1})$.  
			\end{enumerate} 		
		\end{prop}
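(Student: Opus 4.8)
Write $\mu := \lambda(G) = \Lambda(m,r)$. The plan is to first record two cheap facts, then prove \ref{itm:vx-bound}, and finally deduce \ref{itm:nb-bound}--\ref{itm:many-vx-big} by elementary counting. First, a maximal weighting is supported on $V(G)$: if $w$ gave positive weight to an isolated vertex, then (since every vertex of $V(G)$ has positive weight) shifting that weight onto a vertex lying in an edge would strictly increase $w(G)$, contradicting maximality; hence $\sum_{x \in V(G)} w(x) = 1$. Second, since $m \ge \binom{t-1}{r}$, the colex graph $\col{m}{r}$ contains the clique $[t-1]^{(r)}$, so $\mu \ge \lambda(\col{m}{r}) \ge \lambda([t-1]^{(r)}) = \tfrac{1}{r!}(1 - O(1/t))$ by \Cref{lem:lagclique} \ref{itm:lag-clique}; together with $\mu \le 1/r!$ (\Cref{lem:lagclique} \ref{itm:easy-ub}) this pins $\mu$ down to within a factor $1 - O(1/t)$. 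I will also use throughout that $w(N(x)) = r\mu$ for every $x \in V(G)$, which is \eqref{eqn:easylag}.

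For \ref{itm:vx-bound}, fix $x \in V(G)$ and set $W := 1 - w(x) = \sum_{z \in V(G) \setminus \{x\}} w(z)$, which is positive as $|V(G)| \ge r \ge 3$. Rescaling the restriction of $w$ to $V(G) \setminus \{x\}$ by $1/W$ produces a genuine weighting, so, viewing $N(x)$ as an $(r-1)$-graph on that vertex set and applying \Cref{lem:lagclique} \ref{itm:easy-ub}, we get $w(N(x)) \le W^{r-1}/(r-1)!$. Combining this with $w(N(x)) = r\mu$ yields $(1 - w(x))^{r-1} \ge r! \mu \ge 1 - O(1/t)$, whence, by Bernoulli's inequality, $1 - w(x) \ge (1 - O(1/t))^{1/(r-1)} \ge 1 - O(1/t)$, i.e.\ $w(x) = O(1/t)$. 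Fix a constant $C = C(r)$ with $w(x) \le C/t$ for all $x \in V(G)$ and all large $t$.

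For \ref{itm:nb-bound}: every $f \in N(x)$ satisfies $\prod_{z \in f} w(z) \le (C/t)^{r-1}$ by \ref{itm:vx-bound}, so $r\mu = w(N(x)) \le |N(x)| (C/t)^{r-1}$, and since $\mu = \Omega(1)$ this gives $|N(x)| \ge \rho t^{r-1}$ for a suitable $\rho = \rho(r) > 0$. For \ref{itm:bigOt}: summing degrees, $\sum_{x \in V(G)} |N(x)| = r|G| \le r\binom{t}{r} \le t^r/(r-1)!$, and dividing by the lower bound of \ref{itm:nb-bound} gives $|V(G)| \le \kappa t$ with $\kappa := 1/((r-1)! \rho)$. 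For \ref{itm:many-vx-big}: put $\eta := 1/(2\kappa)$ and split $V(G)$ into $A := \{x : w(x) \ge \eta/t\}$ and $B := V(G) \setminus A$. By \ref{itm:bigOt}, $\sum_{x \in B} w(x) \le |B| \eta/t \le \kappa t \cdot \eta/t = 1/2$, so $\sum_{x \in A} w(x) \ge 1/2$; since $w(x) \le C/t$ on $A$, this forces $|A| \ge t/(2C)$, i.e.\ $\Omega(t)$ vertices of weight $\ge \eta/t = \Omega(1/t)$.

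The only step that requires an idea is \ref{itm:vx-bound}: one must notice that $w(N(x)) = r\mu$ is forced to within a factor $1 - O(1/t)$ of the largest value $1/(r-1)!$ attainable by an $(r-1)$-graph Lagrangian, and then the trivial estimate $w(N(x)) \le (1-w(x))^{r-1}/(r-1)!$ squeezes $w(x)$ down to $O(1/t)$. I do not expect a genuine obstacle in the remaining parts, which are short counting arguments; the only care needed is to introduce the constants in the correct order, namely $\rho$ in terms of $C$, then $\kappa$ in terms of $\rho$, then $\eta$ in terms of $\kappa$.
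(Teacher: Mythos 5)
Your proof is correct and follows essentially the same route as the paper's: bound $w(x)$ via $r\lambda(G)=w(N(x))\le(1-w(x))^{r-1}/(r-1)!$ against the lower bound $\lambda(G)\ge\lambda([t-1]^{(r)})$, then deduce \ref{itm:nb-bound}--\ref{itm:bigOt} by counting, and get \ref{itm:many-vx-big} by splitting into heavy and light vertices and observing the light ones cannot carry the whole mass. The only cosmetic differences are that you make explicit the (correct but unstated in the paper) observation that a maximal weighting is supported on $V(G)$, and you phrase \ref{itm:many-vx-big} directly rather than by contradiction.
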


		\begin{proof}
		
			By \Cref{lem:lagclique} \ref{itm:lag-clique}, and because $G$ maximises the Lagrangian among $r$-graphs of size at most $m > \binom{t-1}{r}$, 	 
			\begin{equation} \label{eqn:lag-G-lower}	
				\lambda(G) \ge \lambda([t-1]^{(r)}) = \frac{1}{r!} \cdot (1 + O(t^{-1})).
			\end{equation}
			Note that $w(N(x)) \le (1 - w(x))^{r-1} \cdot \lambda(N(x))$, which is at most $(1 - w(x))^{r-1} \cdot \frac{1}{(r-1)!}$ by \Cref{lem:lagclique} \ref{itm:easy-ub}.
			Using \eqref{eqn:easylag}, we thus have
			\[
				\lambda(G) = \frac{w(N(x))}{r} \le \frac{1}{r}\cdot (1 - w(x))^{r-1} \cdot \frac{1}{(r-1)!} = \frac{1}{r!} \cdot (1 - w(x))^{r-1}.
			\]
			Putting the two inequalities together, we find that $w(x) = O(t^{-1})$, completing the proof of \ref{itm:vx-bound}.

			Now for \ref{itm:nb-bound}.  Using \ref{itm:vx-bound} and \eqref{eqn:lag-G-lower} gives that for each $x \in V(G)$:
			\begin{equation}
				\frac{1}{r!} \cdot (1 + O(t^{-1})) 
				\le \lambda(G) 
				= \frac{w(N(x))}{r} 
				= O(|N(x)|t^{-(r-1)}).
			\end{equation}
			By rearranging, we find that $|N(x)| = \Omega( t^{r-1})$, as required for \ref{itm:nb-bound}.	
			
			We have	
			\[ 
				\frac{1}{r}\sum_{x \in V(G)}|N(x)| \le m \le \binom{t}{r}.
			\]
			Using \ref{itm:nb-bound} to bound $|N(x)|$ and rearranging gives $|V(G)| = O(t)$, as required for \ref{itm:bigOt}.
							
			Let $\kappa$ be as in \ref{itm:bigOt}; we assume, without loss of generality, that $\kappa \ge 1$.
			Set $\omega := \max \{w(x) : x \in V(G)\}$, so by \ref{itm:vx-bound}, $\omega = O(t^{-1})$.
			Let $\delta := \min\{\kappa,(2\kappa t \cdot \omega)^{-1}\}$ and note that $\delta = \Omega(1)$. Suppose, for a contradiction, that at most $\delta t$ vertices have weight at least $\frac{1}{2\kappa^2 t}$. Then by the choice of $\kappa$, $\omega$ and $\delta$, 
			\[
				\sum_{x \in V(G)}w(x) 
				\le \delta t \cdot \omega + (\kappa - \delta)t \cdot \frac{1}{2\kappa^2 t} 
				< \frac{1}{2\kappa} + \frac{1}{2\kappa} 
				< 1,
			\]
			a contradiction. This proves \ref{itm:many-vx-big}. 
		\end{proof}

		Throughout the remainder of the section, define $\rho$ and $\kappa$ to be the constants from \Cref{prop:bigO}. 
		The next lemma shows that removing $\Omega(t^{r-1})$ edges from an $r$-graph $H$ with $\lambda(H) = \Lambda(|H|,r)$ will decrease the Lagrangian by $\Omega(t^{-2})$.
				
		\begin{lem}\label{lem:bigdiff}
			Let $r \ge 3$, let $t \in \N$ be sufficiently large, and let $m$ satisfy $\binom{t-1}{r} \le m \le \binom{t}{r}$. There exists $\alpha > 0$ such that
				$$\Lambda(m,r) - \Lambda(m- \alpha t^{r-1}, r) = \Omega(t^{-2}).$$
		\end{lem}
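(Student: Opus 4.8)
The plan is to reduce the lemma to an unconditional upper bound on the Lagrangian of $r$-graphs with somewhat fewer than $\binom{t-1}{r}$ edges, and then to prove that bound using \Cref{prop:bigO} together with the clique estimates of \Cref{lem:lagclique}. First I note that $m\mapsto\Lambda(m,r)$ is non-decreasing (adjoin fresh $r$-sets to an optimal $m$-edge graph, keeping a maximal weighting of it unchanged), and that $\Lambda(m,r)\ge\lambda(\col{m}{r})\ge\lambda([t-1]^{(r)})$, since $m\ge\binom{t-1}{r}$ forces $\col{m}{r}\supseteq[t-1]^{(r)}$. Hence, choosing a constant $\alpha>0$ slightly larger than $1/(r-1)!$ — so that $m-\alpha t^{r-1}\le\binom{t-1}{r}-\beta t^{r-1}$ for a small constant $\beta=\beta(r)>0$, using $m\le\binom{t}{r}=\binom{t-1}{r}+\binom{t-1}{r-1}$ — it suffices to prove
\[
\Lambda(m',r)\le\lambda([t-1]^{(r)})-\Omega(t^{-2})\qquad\text{whenever}\quad m'\le\binom{t-1}{r}-\beta t^{r-1},
\]
and by monotonicity of $\Lambda$ we may take $m'=\binom{t-1}{r}-\beta t^{r-1}$.

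Fix such $m'$ and let $G'$ be an $r$-graph with $m'$ edges and $\lambda(G')=\Lambda(m',r)$. Discarding the zero-weight vertices of a maximal weighting affects neither $\lambda(G')$ nor the property of being a maximiser for the relevant number of edges, so we may assume $G'$ carries a decreasing maximal weighting $w$ with $w(x)>0$ for every $x\in V(G')$; then $G'$ is left-compressed by \Cref{lem:leftcomp-strong}, and — choosing $\beta$ small enough that $\binom{t-2}{r}\le m'<\binom{t-1}{r}$ — \Cref{prop:bigO} applies to $G'$ with parameter $t-1$, so in particular every weight is $O(1/t)$ and $G'$ has $\Omega(t)$ vertices of weight $\Omega(1/t)$. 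Put $n':=|V(G')|$. If $n'\le t-2$, then $\lambda(G')\le\lambda([n']^{(r)})\le\lambda([t-2]^{(r)})$, and writing $\lambda([n]^{(r)})=\tfrac1{r!}\prod_{j=1}^{r-1}(1-j/n)$ (\Cref{lem:lagclique} \ref{itm:lag-clique}) and bounding the difference of two consecutive values from below gives $\lambda([t-1]^{(r)})-\lambda([t-2]^{(r)})=\Omega(t^{-2})$, finishing this case. So from now on $n'\ge t-1$.

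When $n'\ge t-1$, expand $\bigl(\sum_{x\in V(G')}w(x)\bigr)^r$ and separate the injective-tuple terms (which sum to $r!\,w([n']^{(r)})$) from the remaining non-negative terms; since $n'$ is large and all weights are $O(1/t)$, the latter sum to at least $c_0\,r!\sum_x w(x)^2$ for a constant $c_0=c_0(r)>0$, whence
\[
\lambda(G')=w(G')=w\bigl([n']^{(r)}\bigr)-w\bigl(\overline{G'}\bigr)\le\frac1{r!}-c_0\sum_x w(x)^2-w\bigl(\overline{G'}\bigr),
\]
where $\overline{G'}:=[n']^{(r)}\setminus G'$. As $n'\ge t-1$ and $m'=\binom{t-1}{r}-\beta t^{r-1}$, we have $|\overline{G'}|=\binom{n'}{r}-m'\ge\beta t^{r-1}$. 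Since $\lambda([t-1]^{(r)})=\tfrac1{r!}-\tfrac1{2(r-2)!\,t}+O(t^{-2})$ (\Cref{lem:lagclique} \ref{itm:lag-clique}), the desired inequality reduces to the claim that $c_0\sum_x w(x)^2+w(\overline{G'})$ exceeds $\tfrac1{2(r-2)!\,t}$ by a positive constant times $t^{-1}$, i.e.\ that \emph{either} $\sum_x w(x)^2$ \emph{or} $w(\overline{G'})$ is $\Omega(1/t)$ with a large enough implied constant. This is the technical heart of the lemma, and the step I expect to be the main obstacle. I would prove it by a dichotomy on the $\ge\beta t^{r-1}$ non-edges of $G'$: if $\Omega(t^{r-1})$ of them lie within the set $S$ of $\Omega(t)$ heavy vertices supplied by \Cref{prop:bigO} \ref{itm:many-vx-big}, then each such non-edge has weight $\Omega(t^{-r})$, so $w(\overline{G'})=\Omega(1/t)$; otherwise $G'$ omits only $o(t^{r-1})$ of the $r$-subsets of $S$, so (using $|G'|\le\binom{t-1}{r}-\beta t^{r-1}$) we get $|S|\le t-1-\Omega(1)$, and one must then combine \Cref{prop:bigO} \ref{itm:nb-bound} with the left-compressed structure of $G'$ to show that the low-weight vertices of $G'$ cannot absorb much weight, which forces $\sum_x w(x)^2$ to be sufficiently large. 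Making this last point quantitative — pinning down how much weight a left-compressed maximiser may place on its lightest vertices — is the crux of the argument.
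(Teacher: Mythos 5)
Your approach is genuinely different from the paper's, and it contains a real gap at the point you yourself flag.

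The paper proves \Cref{lem:bigdiff} \emph{constructively}: it takes an optimal graph $H$ with at most $m-\alpha t^{r-1}$ edges, together with a maximal weighting $w$, and builds a new graph $H'$ by cloning a vertex $x$ of maximum weight into a new vertex $u$ (adding $\{u\}\cup f$ for every $f\in N(x)$, plus all $\{x,u\}\cup S$ with $|S|=r-2$), while splitting the weight of $x$ evenly between $x$ and $u$. The number of new edges is $<\alpha t^{r-1}$ (so $|H'|\le m$), all cancellations line up so that the gain is exactly the $w'$-weight of edges containing $\{x,u\}$, and since there are $\Omega(t)$ heavy vertices (by \Cref{prop:bigO}~\ref{itm:many-vx-big}) and $w(x)=\Omega(1/t)$, this gain is $\Omega(t^{-2})$. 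No unconditional upper bound on $\Lambda(m',r)$ is ever needed.

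You instead reduce the lemma to the uniform bound $\Lambda(m',r)\le\lambda([t-1]^{(r)})-\Omega(t^{-2})$ for $m'=\binom{t-1}{r}-\beta t^{r-1}$. The monotonicity reduction is fine, the case $n'\le t-2$ is fine, and the inequality
\[
\lambda(G')\le\frac1{r!}-c_0\sum_x w(x)^2-w(\overline{G'})\qquad\bigl(c_0=\tfrac{1}{2(r-2)!}(1-O(1/t))\bigr)
\]
is correct (the non-injective tuples with exactly one repeated index already give the $\binom{r}{2}\sum_x w(x)^2(1-O(1/t))$ lower bound). But the resulting inequality you must prove,
\[
c_0\sum_x w(x)^2 + w(\overline{G'}) \;\ge\; \frac{1}{2(r-2)!\,(t-1)}+\Omega(t^{-2}),
\]
is exactly balanced: the uniform weighting on $t-1$ vertices saturates the $\sum w^2$ term, and $w(\overline{G'})$ must then carry the whole $\Omega(t^{-2})$ slack. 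Your proposed dichotomy does not close this. If $\Omega(t^{r-1})$ non-edges sit inside the heavy set $S$, you are done; but in the other branch, the conclusion $|S|\le t-1-\Omega(1)$ only shows $\sum_{x\in S}w(x)^2\ge(1-W)^2/(t-1-\Omega(1))$, where $W=\sum_{x\notin S}w(x)$ is the weight carried by light vertices. Since $G'$ may have up to $\kappa t$ vertices and \Cref{prop:bigO}~\ref{itm:many-vx-big} only gives $\Omega(t)$ heavy vertices (not $t-O(1)$), $W$ need not be $O(1/t)$ — it can be a constant — and then the bound $(1-W)^2/(t-1-\Omega(1))$ can fall \emph{below} $1/(t-1)$. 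The statement you say "one must combine \Cref{prop:bigO}~\ref{itm:nb-bound} with the left-compressed structure" to prove is precisely the kind of quantitative control on light vertices that the paper's \Cref{lem:almost-done}~\ref{itm:lowerbd} provides; but that lemma is proved \emph{using} \Cref{lem:bigdiff}, so invoking anything of that strength here would be circular. As written, the proposal leaves the lemma unproved in the main case $n'\ge t-1$.
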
		
					
		\begin{proof}
				
			Define $\alpha:= 2\kappa^{r-1}$ and let $t$ be large enough so that 
			\begin{equation}
				\label{eqn:pickm}
				m' := m - \alpha t^{r-1} 
				> \binom{\frac{t}{2}}{r}.
			\end{equation}
			Let $H \subseteq \N^{(r)}$ and let $w$ be a weighting of $H$ such that $|H| \le m'$, $w(H) = \lambda(H) = \Lambda(m', r)$, and $w(x) > 0$ for every $x \in V(H)$.
			
			By \Cref{prop:bigO} \ref{itm:bigOt}, the definition of $m'$, and \eqref{eqn:pickm}, we have $|V(H)| \le \kappa t$. 
			Let $x \in V(H)$ be such that $w(x) \ge w(y)$ for all $y \in V(H)$. Let $u$ be a new vertex and define a new $r$-graph $H'$ by setting 
			\[	
				H' := H \,\cup\, \Big\{\{u\} \cup f: f \in N(x)\Big\} \,\cup\, \Big\{\{x,u\} \cup S: S \subseteq V(H)\setminus \{x\}, |S| = r-2 \Big\}.
			\]
			Define a weighting $w'$ of $H'$ as follows. 
			\[
				w'(v) = \left\{
					\begin{array}{ll}
						w(v) & v \neq x, u \\
						\frac{1}{2} \cdot w(x) & v \in \{x, u\}.
					\end{array}
					\right.
			\]
			For $t$ sufficiently large, we have (crudely, using $|V(H)| \le \kappa t$)
			\begin{equation} \label{eqn:alpha}
				|H'| - |H| \le \kappa^{r-1} t^{r-1} + \kappa^{r-2}t^{r-2} < \alpha t^{r-1}.
			\end{equation}
			We also have that $w'(H') - w(H)$ is precisely the weight (with respect to $w'$) of the edges in $H'$ containing $x$ and $u$. By \Cref{prop:bigO} \ref{itm:many-vx-big} and \eqref{eqn:pickm}, $\Omega(t)$ vertices of $H$ have weight $\Omega(1/t)$. Let $B$ be the set of these vertices and let $E_B \subseteq H'$ be the edges that contain $\{x, u\}$ and are contained within $B \cup \{x,u\}$. So we have $w'(E_B) = \Omega(t^{-2})$. 
		
			By \eqref{eqn:pickm} and \eqref{eqn:alpha} we have $|H'| \le m$. So putting this all together gives that 
			\[	
				\Lambda(m ,r) - \Lambda(m- \alpha t^{r-1}, r) \ge \lambda(H') - \lambda(H) = \Omega(t^{-2}),
			\]
			as required.
		\end{proof}	
		
		Given an $r$-graph $G$ that maximises the Lagrangian, we now give a lower bound on the weight of all but $O(1)$ of its vertices. 
			
		\begin{lem}\label{lem:almost-done}
			Let $r \ge 3$, $t \in \N$ and $m$ be such that $\binom{t-1}{r} \le m \le \binom{t}{r}$. Suppose that $G \subseteq [n]^{(r)}$ maximises the Lagrangian among $r$-graphs with $m$ edges. Let $w$ be a maximal weighting of $G$ and suppose that $w(x) > 0$ for every $x \in [n]$.
			The following statements hold.
			\begin{enumerate}
				\item\label{itm:lowerbd} 
					There exists a constant $\beta > 0$ such that all but $O(1)$ vertices $x$ in $G$ satisfy $w(x) \ge \frac{\beta}{t}$. 
				\item\label{itm:nbdiff} 
					For all $x,y \in [n]$, we have $\big||N(x)| - |N(y)|\big| = O(t^{r-2})$.
			\end{enumerate}
		\end{lem}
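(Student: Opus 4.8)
The plan is to prove \ref{itm:lowerbd} first, by a vertex-deletion-and-rescaling argument, and then deduce \ref{itm:nbdiff} from it together with \Cref{cor:frankl} and left-compression. For \ref{itm:lowerbd} I would argue by contradiction. Fix a constant $p = p(r) \in \N$ with $p\rho/2 \ge \alpha$, where $\rho$ is the constant of \Cref{prop:bigO} \ref{itm:nb-bound} and $\alpha$ that of \Cref{lem:bigdiff}; then fix a small constant $\beta = \beta(r) > 0$ (how small is decided at the end), and suppose for contradiction that there are distinct vertices $i_1, \dots, i_p$ with $w(i_j) < \beta/t$. Put $G^- := G \setminus \{i_1, \dots, i_p\}$ and let $E_p$ be the number of deleted edges. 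On the one hand, $E_p$ is large: each $|N(i_j)| \ge \rho t^{r-1}$, and the families of edges through $i_j$ and through $i_k$ share at most $\binom{n-2}{r-2} = O(t^{r-2})$ edges (here $n = |V(G)| = O(t)$ by \Cref{prop:bigO} \ref{itm:bigOt}), so by Bonferroni $E_p \ge p\rho t^{r-1} - O(p^2 t^{r-2}) \ge \alpha t^{r-1}$ for $t$ large. Hence, by monotonicity of $\Lambda(\cdot, r)$ and \Cref{lem:bigdiff}, $\lambda(G^-) \le \Lambda(m - E_p, r) \le \Lambda(m - \alpha t^{r-1}, r) \le \lambda(G) - \Omega(t^{-2})$.

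On the other hand, $\lambda(G^-)$ is close to $\lambda(G)$: rescale $w$ to a weighting $w'$ of $G^-$ by setting $w'(z) = w(z)/(1 - \sigma)$ for $z \notin \{i_1, \dots, i_p\}$, where $\sigma := \sum_j w(i_j) < p\beta/t$. The total $w$-weight of the deleted edges is at most $\sum_j w(i_j)\, w(N(i_j)) = \sigma\mu$, where $\mu := w(N(i_j))$ is the same for every $j$ by \Cref{lem:frank} \ref{itm:nb-same} and equals $r\lambda(G)$ by \eqref{eqn:easylag}; therefore
\[
	\lambda(G^-) \ge w'(G^-) = (1-\sigma)^{-r}\big(\lambda(G) - w(\text{deleted edges})\big) \ge (1 + r\sigma)\big(\lambda(G) - \sigma\mu\big).
\]
Since $r\sigma\lambda(G) = \sigma\mu$, the first-order terms cancel and $\lambda(G^-) \ge \lambda(G) - O(\sigma^2) \ge \lambda(G) - c\, p^2\beta^2 / t^2$ for a constant $c$ depending only on $r$. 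Comparing this with the bound from the previous paragraph forces $c\, p^2\beta^2 = \Omega(1)$, which is false once $\beta$ was taken small enough in terms of the already-fixed constants $p$ and $r$. This contradiction establishes \ref{itm:lowerbd}, with the exceptional set of size $O(1)$ having at most $p - 1$ vertices.

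For \ref{itm:nbdiff}, relabel $[n]$ so that $w$ is decreasing — this affects neither the statement nor \ref{itm:lowerbd} — so that $G$ is left-compressed by \Cref{lem:leftcomp-strong}. Fix $x < y$. Splitting $N(x)$ and $N(y)$ according to whether an edge contains the other endpoint gives $|N(x)| - |N(y)| = |N_y(x)| - |N_x(y)|$, while left-compression gives $N_x(y) \subseteq N_y(x)$, so $|N(x)| - |N(y)| = |N_y(x) \setminus N_x(y)|$. The same splitting together with \Cref{lem:frank} \ref{itm:nb-same} (that is, \Cref{cor:frankl}) yields
\[
	w\big(N_y(x) \setminus N_x(y)\big) = \big(w(x) - w(y)\big)\, w(N(x,y)) \le \tfrac{1}{(r-2)!}\cdot O(t^{-1}) = O(t^{-1}),
\]
using $w(x) = O(t^{-1})$ (\Cref{prop:bigO} \ref{itm:vx-bound}) and $w(N(x,y)) \le \lambda([n]^{(r-2)}) \le \tfrac{1}{(r-2)!}$ (\Cref{lem:lagclique} \ref{itm:easy-ub}). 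Now I estimate $|N_y(x) \setminus N_x(y)|$: among the $(r-1)$-sets it contains, those meeting the exceptional set $Z$ from \ref{itm:lowerbd} number at most $|Z|\binom{n}{r-2} = O(t^{r-2})$, and every other one has $w$-weight at least $(\beta/t)^{r-1}$, so there are at most $O(t^{-1})/(\beta/t)^{r-1} = O(t^{r-2})$ of them. Hence $|N(x)| - |N(y)| = O(t^{r-2})$; the case $x > y$ is symmetric.

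The step I expect to be the main obstacle is the two-sided estimate for \ref{itm:lowerbd}: the rescaling must recover \emph{all} of the linear-order loss in the Lagrangian (which is precisely where the identity $\mu = r\lambda(G)$ is used), leaving only a quadratic $O(\beta^2/t^2)$ deficit, while the deletion must still remove $\Omega(t^{r-1})$ edges so that \Cref{lem:bigdiff} produces a genuine $\Omega(t^{-2})$ drop; reconciling these forces one to fix the number $p$ of deleted vertices first and the threshold $\beta$ only afterwards, and to verify that a bounded collection of $\Omega(t^{r-1})$-sized neighbourhoods, pairwise overlapping in only $O(t^{r-2})$ edges, really does cover $\Omega(t^{r-1})$ edges.
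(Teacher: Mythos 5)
Your proof of \ref{itm:lowerbd} is correct but takes a genuinely different route from the paper's. The paper deletes only those edges having \emph{two or more} vertices in a fixed small set $S$ of $\gamma + 1$ lowest-weight vertices (so the weight loss is directly quadratic, at most $\binom{\gamma+1}{2}w(n-\gamma)^2$, with no first-order term to worry about), and then invokes \Cref{lem:frank}~\ref{itm:opt-ij} repeatedly to pass to $G \setminus U$ for some $U \subseteq S$ of size $\gamma$, at which point \Cref{lem:bigdiff} gives the $\Omega(t^{-2})$ drop. You instead delete the small-weight vertices entirely, accept a \emph{linear}-order weight loss $\le \sigma\mu$, and cancel it exactly by rescaling, using the identity $\mu = w(N(i_j)) = r\lambda(G)$ from \eqref{eqn:easylag}; what is left after cancellation is again an $O(\sigma^2)$ deficit. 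Both methods compare against \Cref{lem:bigdiff} in the same way and reach the same conclusion, but the paper's version sidesteps the cancellation computation by never removing single-small-vertex edges at all, while yours replaces the appeal to \Cref{lem:frank}~\ref{itm:opt-ij} with the rescaling trick and a Bonferroni estimate to guarantee $\Omega(t^{r-1})$ distinct deleted edges. Your version also has the minor benefit of not requiring $w$ to be relabelled decreasing for part \ref{itm:lowerbd}. Part \ref{itm:nbdiff} is essentially identical to the paper's argument: same splitting $|N(x)|-|N(y)| = |N_y(x)\setminus N_x(y)|$ via left-compression, the same use of \Cref{cor:frankl} with the $O(t^{-1})$ bound from \Cref{prop:bigO}~\ref{itm:vx-bound}, and the same $T_1/T_2$ decomposition of $N_y(x) \setminus N_x(y)$ into sets meeting the exceptional set and sets with all weights $\ge \beta/t$.
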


		\begin{proof}
			We assume that $w$ is decreasing. It follows from \Cref{lem:leftcomp-strong} that $G$ is left-compressed.

			Let $\alpha$ be the constant from \Cref{lem:bigdiff} and let $\gamma := \ceil{\frac{2\alpha}{\rho}}$. We will show that $w(n-\gamma) = \Omega(t^{-1})$, from which \ref{itm:lowerbd} will follow as $w$ is decreasing.
			Let $S:= \{n-\gamma,\ldots,n\} \subseteq V(G)$ and let $G'$ be the graph obtained from $G$ by deleting edges incident with at least two vertices of $S$. Let $W$ be the weight of the edges containing at least two vertices of $S$. As $w$ is decreasing and $w(N(x,y)) \le 1$, 
			\begin{equation}\label{upper}
				W \le \sum_{x,y \in S} w(x)w(y)w(N(x,y)) \le \binom{\gamma + 1}{2} \cdot w(n-\gamma)^2.
			\end{equation}
			As there are no edges in $G'$ that contain a pair of vertices from $S$, by \Cref{lem:frank} \ref{itm:opt-ij}, there exists some $U \subseteq S$ with $|U| = |S|-1$ such that $\lambda(G') = \lambda(G'')$, where $G'':= G \setminus U$. By choice of $\gamma$, 

			\[
				|G| - |G''| 
				= \sum_{u \in U} |N(u)| + O(t^{r-2}) 
				\ge 2 \alpha t^{r-1} + O(t^{r-2}) 
				\ge \alpha t^{r-1}.
			\]
			So using \Cref{lem:bigdiff}, we find that $\lambda(G) - \lambda(G'') = \Omega(t^{-2})$.
			Combining this with \eqref{upper} gives 
			\[
				\binom{\gamma+1}{2} \cdot w(n-\gamma)^2 \ge W = w(G) - w(G') \ge \lambda(G) - \lambda(G'') = \Omega(t^{-2}) .
			\]
			This shows that there exists a constant $\beta >0$ such that $w(n-\gamma) \ge \frac{\beta}{t}$, as required for \ref{itm:lowerbd}.
			
			Now consider \ref{itm:nbdiff}. As $G$ is left-compressed, we may assume that $N_x(y) \subseteq N_y(x)$. Thus we have
			\[
				0 \le |N(x)| - |N(y)| = |N_y(x)| - |N_x(y)| = |N_y(x) \setminus N_x(y)|.
			\]
			Let $T:= N_y(x) \setminus N_x(y)$, let $T_1 := \{e \in T: w(y) \ge \frac{\beta}{t} \text{ for all } y \in e\}$ and let $T_2 := T \setminus T_1.$ We will show that $|T_1|, |T_2| = O(t^{r-2})$, which will imply the claim. 
			 
			First consider $|T_2|$. From \ref{itm:lowerbd} we know that at most $\gamma$ vertices have weight less than $\frac{\beta}{t}$. By \Cref{prop:bigO} \ref{itm:bigOt}, $G$ has at most $\kappa t$ vertices. So $|T_2| \le \gamma \cdot (\kappa t)^{r-2} = O(t^{r-2})$, as required. 	                     
			 
			Now let us bound $|T_1|$. Using \Cref{prop:bigO} \ref{itm:vx-bound} shows that we can bound the left-hand side of \eqref{eq:frankl} by
			\begin{equation}
			\label{eqn:LHSuseful}
				w(N(x,y))(w(x) - w(y)) 
				\le  |N(x,y)| \cdot O(t^{-(r-1)})
				\le \binom{\kappa t}{r-2}\cdot O(t^{-(r-1)})
				= O(t^{-1}).
			\end{equation}
			By definition of $T_1$, we have $w(T_1) \ge \left(\frac{\beta}{t}\right)^{r-1}|T_1|$, so we can bound the right-hand side of \eqref{eq:frankl} by
			\begin{equation}
			\label{eqn:RHSuseful}
				w(N_y(x)) - w(N_x(y)) = w(T) \ge w(T_1) \ge \left(\frac{\beta}{t}\right)^{r-1}|T_1|.
			\end{equation}
			Combining \eqref{eqn:LHSuseful} and \eqref{eqn:RHSuseful} gives that $|T_1| = O(t^{r-2})$, as required. This completes the proof of \ref{itm:nbdiff} and the proof of the lemma.
		\end{proof}	
	\subsection{Proof of \Cref{thm:t-vs}} \label{subsec:proof-t-vs}
			
		We now combine the results from the previous subsection to prove \Cref{thm:t-vs}. 

		\begin{proof}[Proof of \Cref{thm:t-vs}]

			As usual, we assume that $w$ is decreasing. Then $G \subseteq [n]^{(r)}$ for some $n \in \N$, and we may assume that $w(x) > 0$ for every $x \in [n]$. Also, by \Cref{lem:leftcomp-strong}, $G$ is left-compressed.
			Let us suppose, in order to obtain a contradiction, that $n = t+s$, for $s \ge 1$. We will show that the following holds, where $\overline{G} := [n]^{(r)} \setminus G$ and $\beta$ is the constant from \Cref{lem:almost-done}~\ref{itm:lowerbd}.
			\begin{equation}
				\label{eqn:lots-missing-are-big}
				\left|\{e \in \overline{G}: w(e) \ge (\beta / t)^{r}\}\right| = \Omega(s \cdot t^{r-1}).
			\end{equation}
			
			Before proving \eqref{eqn:lots-missing-are-big}, let us show how this implies the theorem. 
			By \eqref{eqn:lots-missing-are-big}, we have
			\[
				\lambda([n]^{(r)}) - \lambda(G) 
				\ge w(\overline{G}) 
				\ge \beta^r \cdot t^{-r} \cdot \Omega( s \cdot t^{r-1})
				= \Omega(s/t).
			\]
			However, by choice of $G$ we have $\lambda(G) \ge \lambda([t-1]^{(r)})$, so this contradicts \Cref{lem:lagclique} \ref{itm:clique-diff}. It follows that $n \le t$, as required. 
				
			It remains to prove \eqref{eqn:lots-missing-are-big}.	
			As $n = t+ s$, we have
			\[	
				\binom{t+s}{r} - \binom{t}{r}  \le	|\overline{G}| \le \binom{t+s}{r} - \binom{t-1}{r}.
			\]
			And so, as $|V(G)| = O(t)$ (by \Cref{prop:bigO} \ref{itm:bigOt}) and $s = O(t)$, 
			\[
				|\overline{G}| = \Theta(s \cdot t^{r-1}).
			\]
			Define				
			\[	
				U_1 := \{e \in \overline{G}: w(e) \ge \left(\beta/t\right)^{r}\} \hspace{0.5cm} \text{ and } \hspace{0.5cm}
				U_2 := \overline{G} \setminus U.
			\]
			Now suppose, in order to obtain a contradiction, that $|U_2| \ge |\overline{G}|/2$. 

			Let $S:= \{x \in V(G): w(x) < \beta/t\}$. By \Cref{lem:almost-done} \ref{itm:lowerbd}, $|S| = O(1)$. Each set in $U_2$ contains a vertex of $S$ and so, by the pigeonhole principle, some vertex $x \in S$ is contained in at least $|U_2| / |S| = \Omega(s \cdot t^{r-1})$ members of $U_2$. In particular, there are $\Omega(s \cdot t^{r-1})$  sets of $\overline{G}$ that contain $x$.

			However, using \Cref{lem:almost-done} \ref{itm:nbdiff}, gives that for all $y \in V(G)$ there are $\Omega(s \cdot t^{r-1})$ sets of $\overline{G}$ containing $y$. So in total, as $|V(G)| \ge t-1$, we have
			\[	
				|\overline{G}| \ge |V(G)| \cdot \Omega(s \cdot t^{r-1}) = \Omega( s \cdot t^{r})  > |\overline{G}|,
			\]
			a contradiction. It follows that $|U_2| \le |\overline{G}|/2$, and thus $|U_1| \ge |\overline{G}|/2 = \Omega(s \cdot t^{r-1})$. This completes the proof of \eqref{eqn:lots-missing-are-big} and hence the proof of \Cref{thm:t-vs}.
		\end{proof}
\section{Proof of \Cref{thm:main1}}\label{sec:a-big}
	The main goal of this section is to prove the following theorem.

	\begin{thm} \label{thm:clique}
		Let $r \ge 3$, and let $t \in \N$ be sufficiently large. Let $G$ be an $r$-graph with $\binom{t}{r} - \binom{t-2}{r-2}$ edges. Then $\lambda(G) \le \lambda([t-1]^{(r)})$.
	\end{thm}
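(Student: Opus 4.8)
The plan is to reduce to the case where $G$ maximises the Lagrangian among $r$-graphs with $m:=\binom{t}{r}-\binom{t-2}{r-2}$ edges: since $m$ is the largest member of an interval of the principal range, $\lambda(\col{m}{r})=\lambda([t-1]^{(r)})$ by \Cref{rem:principal-range} (after the shift $t\mapsto t-1$), and as $\lambda(G)\le\Lambda(m,r)$ for every $r$-graph $G$ with $m$ edges it is enough to show $\Lambda(m,r)\le\lambda([t-1]^{(r)})$. So let $G$ be such a maximiser. By \Cref{thm:t-vs} we may assume $V(G)\subseteq[t]$, and by \Cref{lem:leftcomp-strong} that $G$ is left-compressed and has a maximal decreasing weighting $w$. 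If $w(t)=0$ then $\lambda(G)=w(G)\le w([t-1]^{(r)})\le\lambda([t-1]^{(r)})$ and we are done, so assume $w(x)>0$ for all $x\in[t]$. Write $\overline{G}:=[t]^{(r)}\setminus G$, so $|\overline{G}|=\binom{t-2}{r-2}$, and note that $\overline{G}$ is closed under raising labels since $G$ is left-compressed.

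I would first collect some properties of $w$, in the spirit of \Cref{lem:almost-done}. The bound $w(x)=O(1/t)$ is \Cref{prop:bigO} \ref{itm:vx-bound}. For a matching lower bound it suffices to show $w(t)=\Omega(1/t)$: deleting from $G$ every edge meeting $\{t-1,t\}$ in at least two vertices changes the weight by at most $O(w(t-1)w(t))$, while the resulting graph has no edge through $\{t-1,t\}$, so by \Cref{lem:frank} \ref{itm:opt-ij} and a degree count one may delete $\Omega(t^{r-1})$ further edges without changing its Lagrangian; \Cref{lem:bigdiff} then forces $w(t-1)w(t)=\Omega(1/t^2)$, whence $w(x)=\Theta(1/t)$ for every $x\in[t]$. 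Moreover, by \Cref{lem:lagclique} \ref{itm:clique-diff}, $\lambda(G)\ge\lambda([t-1]^{(r)})$ is within $O(1/t^2)$ of $\lambda([t]^{(r)})$, and $\lambda(G)=w([t]^{(r)})-w(\overline{G})\le w([t]^{(r)})$, so $w([t]^{(r)})$ is within $O(1/t^2)$ of $\lambda([t]^{(r)})$ and hence $w$ is close to uniform (e.g.\ $\sum_{x\in[t]}(w(x)-1/t)^2=O(1/t^2)$). Finally, \Cref{cor:frankl} applied to a pair $\{x,t\}$ yields, for $x\in[t-1]$,
\[
w(N(x,t))\cdot\big(w(x)-w(t)\big)=w\big(N_t(x)\setminus N_x(t)\big),
\]
the inclusion $N_x(t)\subseteq N_t(x)$ holding by left-compressedness; since $w$ is close to uniform the left-hand factor is $\tfrac{1}{(r-2)!}(1+o(1))$, so this is an estimate for $w(x)-w(t)$ in terms of the absent edges of $G$.

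The heart of the proof is the comparison with $H:=\col{m}{r}$, viewed as the $r$-graph on $[t]$ whose non-edges are exactly $\mathcal{D}:=\{e\in[t]^{(r)}:\{t-1,t\}\subseteq e\}$; thus $|\mathcal{D}|=\binom{t-2}{r-2}=|\overline{G}|$ and $\lambda(H)=\lambda([t-1]^{(r)})$. Since $w(G)=w([t]^{(r)})-w(\overline{G})$ and $w(H)=w([t]^{(r)})-w(\mathcal{D})$, we obtain the identity
\[
w(G)-w(H)=w(\mathcal{D})-w(\overline{G})=w(t-1)\,w(t)\cdot e_{r-2}\big(w(1),\dots,w(t-2)\big)-w(\overline{G}).
\]
If $w(\mathcal{D})\le w(\overline{G})$ then $\lambda(G)=w(G)\le w(H)\le\lambda(H)=\lambda([t-1]^{(r)})$ and we are done. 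Otherwise the ``loss'' $w(\mathcal{D})-w(\overline{G})$ is positive, and I would produce a weighting $w'$ of $[t]$, obtained from $w$ by a small perturbation exploiting the symmetry of $H$ (the vertices $t-1$ and $t$ are interchangeable, and no edge of $H$ meets both), for which the ``gain'' $w'(H)-w(H)$ is at least the loss; the estimate for $w(x)$ above, together with the relations $w(N_G(x))=r\lambda(G)$ from \Cref{lem:frank} \ref{itm:nb-same}, is what pins down $w$ precisely enough to carry out this bookkeeping. Since the gain is at most the deficit $\lambda(H)-w(H)$, this gives $\lambda(G)=w(H)+(\text{loss})\le w(H)+\big(\lambda(H)-w(H)\big)=\lambda([t-1]^{(r)})$, as required.

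The main obstacle is this last step: choosing the perturbation $w'$ and verifying that the gain is at least the loss. Both quantities are of order $t^{-2}$, so the comparison has to be carried out at that precision, treating separately the edges of $H$ that meet $\{t-1,t\}$ and those that do not; the near-uniformity of $w$ and the formula relating $w(x)-w(t)$ to the absent edges of $G$ are what keep this tractable, and making the constants on the two sides line up is the delicate point.
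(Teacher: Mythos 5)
Your overall strategy matches the paper's: reduce to a maximiser $G\subseteq[t]^{(r)}$ via \Cref{thm:t-vs}, compare $w(G)$ with $w(H)$ where $H=\col{m}{r}$ (your identity $w(G)-w(H)=w\big(\{e\in[t]^{(r)}:\{t-1,t\}\subseteq e\}\big)-w(\overline G)$ is a tidy repackaging of the paper's edge-swapping bookkeeping in \Cref{cl:weight-loss-swaps}), and then perturb $w$ using the $\{t-1,t\}$-symmetry of $H$. But the preparatory claim $w(t)=\Omega(1/t)$ is both unestablished and in fact false, and you have not constructed the perturbation --- and these are precisely the two load-bearing points. For the first: you invoke \Cref{lem:bigdiff} after deleting roughly $|N(t)|\ge\rho t^{r-1}$ edges, but \Cref{lem:bigdiff} only yields a drop once the edge count falls below $m-\alpha t^{r-1}$ for the \emph{specific} constant $\alpha=2\kappa^{r-1}\ge2$ appearing in its proof, and there is no reason $\rho\ge\alpha$; for $G=\col{m}{r}$ one even has $|N(t)|=\binom{t-2}{r-1}\approx t^{r-1}/(r-1)!<2t^{r-1}$, so the deletion is not deep enough. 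This constant mismatch is exactly why \Cref{lem:almost-done}\ref{itm:lowerbd} deletes $\gamma=\lceil2\alpha/\rho\rceil$ vertices at once and consequently only bounds $w(n-\gamma)$, never $w(n)$; you can delete at most one of $\{t-1,t\}$ via \Cref{lem:frank}\ref{itm:opt-ij}, so that fix is unavailable. Worse, the claim fails outright: $t-1$ and $t$ are twins in $\col{m}{r}$, so if the theorem holds (i.e.\ $\col{m}{r}$ is a maximiser) there exist maximal weightings with $w(t)>0$ arbitrarily small. The paper proves only $w(t-1)\ge\frac1{6t}+O(t^{-2})$ (\Cref{prop:prelims}\ref{itm:useful-w-t-1}) and deliberately leaves $w(t)$ as a free parameter: both the loss $w(G)-w(H)=O(t^{-0.1}w(t)^2)$ of \Cref{cl:weight-loss-swaps} and the gain $\Omega(t^{-0.04}w(t)^2)$ from the perturbation scale with $w(t)^2$, so the comparison is valid uniformly in $w(t)$. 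Your framing ``both quantities are of order $t^{-2}$'' leans on the unavailable bound.

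Second, you never actually build the perturbation, and this is the crux of the argument. The paper's device is two-step: first set $w'(t-1)=w(t-1)+w(t)$ and $w'(t)=0$, which leaves $w'(H)=w(H)$ since $t,t-1$ are twins in $H$ sharing no edge; then \Cref{cl:big-diff} locates $x_0\in[t-2]$ with $|w(x_0)-w'(t-1)|=\Omega(t^{-0.02}w(t))$ via a case split on whether $\me{t-1}$ exceeds $t^{r-2-0.01}$, and averaging $w'(x_0),w'(t-1)$ yields $w''$ with $w''(H)-w'(H)=\Omega(t^{-0.04}w(t)^2)$, beating the loss bound. The ingredient that powers both \Cref{cl:big-diff} and \Cref{cl:weight-loss-swaps} is the pointwise estimate $w(x)=w(1)-\Theta\big(\frac{\me{x}-\me{1}}{t^{r-2}}\,w(t)\big)$ of \Cref{prop:prelims}\ref{itm:useful-w-x}, which ties the weight deficit of $x$ to the number of absent edges through $x$; a near-uniformity bound of the form $\sum_x(w(x)-1/t)^2=O(1/t^2)$ is too coarse to single out one vertex with the required imbalance of order $t^{-0.02}w(t)$, particularly when $w(t)$ may be $o(1/t)$.
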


	This implies \Cref{thm:main1} (as adding edges to a graph cannot decrease its Lagrangian). We begin by proving some simple bounds. The proof of \Cref{thm:clique} will then be given in Subsection~\ref{subsec:clique}. Our hope is that, by initially separating out these basic bounds, the key ideas within the proof will be clearer to the reader.  

	\subsection{Preliminaries}

		For a graph $G \subseteq [t]^{(r)}$, write $\overline{G} := [t]^{(r)} \setminus G$ (note that this definition depends on $t$, but since $t$ will always be clear from the context, we hope this imprecise notation will be clear). For a vertex $x \in [t]$, define $e(x)$ to be the number of edges of $\overline{G} := [t]^{(r)} \setminus G$ that contain $x$. 
					
		\begin{prop} \label{prop:prelims} 
			Let $r \ge 3$, let $t \in \N$ be sufficiently large, let $a$ be such that $0 \le a \le \binom{t-2}{r-2}$  and set $m := \binom{t}{r} - a$.
			Suppose that $G$ is a subgraph of $[t]^{(r)}$ with at most $m$ edges, and suppose that $G$ is left-compressed and that it maximises the Lagrangian among $r$-graphs with $m$ edges. Let $w$ be a maximal weighting of $G$.
			The following statements hold.
			\begin{enumerate}
				\item \label{itm:useful-missing-1}
					$\me{1} \le \frac{ra}{t} = O(t^{r-3})$.
				\item \label{itm:useful-w-t-1}
					$w(t-1) \ge \frac{1}{6t} + O(t^{-2})$.
				\item \label{itm:useful-w-x}
					$w(t) = w(1) - \Theta\left(\frac{\me{t} - \me{1}}{t^{r-1}}\right)$ and, for $x < t$, $w(x) = w(1) - \Theta\left(\frac{\me{x} - \me{1}}{t^{r-2}}\cdot w(t)\right)$.
			\end{enumerate}
		\end{prop}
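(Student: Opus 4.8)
The plan is to fix once and for all a decreasing maximal weighting $w$ of $G$ (legitimate by \Cref{lem:leftcomp,lem:leftcomp-strong}), to record that $\overline G := [t]^{(r)}\setminus G$ is then closed under replacing a vertex of an edge by a larger non‑vertex — so that $\me{x}$ is non‑decreasing in $x$ — and to note that since $\binom{t-1}{r}\le m\le\binom tr$ we have $\lambda(G)=\Lambda(m,r)\ge\lambda(\col{m}{r})\ge\lambda([t-1]^{(r)})>\lambda([t-2]^{(r)})$; in particular $|V(G)|\ge t-1$, so $w(1),\dots,w(t-1)>0$, and every result of \Cref{sec:prelims,sec:supp} applies to $G$ — notably $w(x)=O(1/t)$ for all $x$ (\Cref{prop:bigO} \ref{itm:vx-bound}) and all but $O(1)$ vertices have weight $\Omega(1/t)$ (\Cref{lem:almost-done} \ref{itm:lowerbd}). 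For \ref{itm:useful-missing-1} I would observe that the closure property of $\overline G$ injects the non‑edges through $1$ avoiding a fixed $y$ into those through $y$ avoiding $1$, so $\me{1}\le\me{y}$ for every $y$; hence $t\cdot\me{1}\le\sum_x\me{x}=r|\overline G|=ra$, and since $a\le\binom{t-2}{r-2}=O(t^{r-2})$ this gives $\me{1}\le ra/t=O(t^{r-3})$.

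For \ref{itm:useful-w-t-1} I would write $u=w(t)$, $v=w(t-1)$ (so $u\le v=O(1/t)$) and peel off vertices $t$ and then $t-1$. Since $w(G)=w(G\cap[t-1]^{(r)})+w(t)\,w(N(t))$, with $w(N(t))=r\lambda(G)$ when $w(t)>0$ and the term vanishing when $w(t)=0$, we get $\lambda(G)(1-ru)=w(G\cap[t-1]^{(r)})$; and peeling off $t-1$ inside $G\cap[t-1]^{(r)}$,
\[
  w(G\cap[t-1]^{(r)})=w(G\cap[t-2]^{(r)})+v\,w(N_t(t-1))\le(1-u-v)^r\lambda([t-2]^{(r)})+v\,r\lambda(G),
\]
using $w(N_t(t-1))\le w(N(t-1))=r\lambda(G)$ and $w(G\cap[t-2]^{(r)})\le(1-u-v)^r\lambda([t-2]^{(r)})$ (as $\sum_{x\in[t-2]}w(x)=1-u-v$). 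Combining gives $\lambda(G)(1-r(u+v))\le(1-u-v)^r\lambda([t-2]^{(r)})$, so — using $\tfrac{(1-x)^r}{1-rx}=1+\binom r2 x^2+O(x^3)$ — $\lambda(G)\le\lambda([t-2]^{(r)})\bigl(1+\binom r2(u+v)^2(1+O(1/t))\bigr)$. Since $\lambda(G)\ge\lambda([t-1]^{(r)})$ and (a direct computation with $\lambda([n]^{(r)})=\binom nr/n^r$) $\lambda([t-1]^{(r)})/\lambda([t-2]^{(r)})=1+\binom r2 t^{-2}(1+O(1/t))$, this forces $(u+v)^2\ge t^{-2}(1+O(1/t))$, whence $w(t-1)=v\ge\tfrac{u+v}{2}\ge\tfrac1{2t}+O(t^{-2})\ge\tfrac1{6t}+O(t^{-2})$.

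For \ref{itm:useful-w-x} I would apply \Cref{cor:frankl} to the pair $(1,x)$; left‑compression gives $N_1(x)\subseteq N_x(1)$, so $(w(1)-w(x))\,w(N(1,x))=w(N_x(1)\setminus N_1(x))$. The combinatorial core is the exact identity $|N_x(1)\setminus N_1(x)|=\me{x}-\me{1}$: this set is in bijection (via $f\mapsto\{x\}\cup f$) with $A_0:=\{g\in\overline G:x\in g,\ 1\notin g,\ (g\setminus x)\cup 1\in G\}$, and writing $A:=\{g\in\overline G:x\in g,1\notin g\}$, $B:=\{h\in\overline G:1\in h,x\notin h\}$, the maps $g\mapsto(g\setminus x)\cup 1$ and $h\mapsto(h\setminus 1)\cup x$ are well‑defined injections $A\setminus A_0\to B$ and $B\to A\setminus A_0$ (the second using the closure of $\overline G$ and $1<x$), so $|A_0|=|A|-|B|=(\me{x}-d_{\overline G}(1,x))-(\me{1}-d_{\overline G}(1,x))=\me{x}-\me{1}$. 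It then remains to estimate the two weights, using \ref{itm:useful-missing-1}, \ref{itm:useful-w-t-1}, and the consequence that every vertex except possibly $t$ has weight $\Theta(1/t)$: the $(r-2)$‑sets absent from $N(1,x)$ number at most $d_{\overline G}(1,x)\le\me{1}=O(t^{r-3})$, so $w(N(1,x))$ is within $O(1/t)$ of $w\bigl((V(G)\setminus\{1,x\})^{(r-2)}\bigr)=\Theta(1)$; for $x=t$ every $f\in N_t(1)\setminus N_1(t)$ avoids $t$, so $w(f)=\Theta(t^{-(r-1)})$ and thus $w(1)-w(t)=\Theta\bigl(\tfrac{\me{t}-\me{1}}{t^{r-1}}\bigr)$; for $x<t$ the sets $f$ may contain $t$, and — after showing the sets containing $t$ govern the sum, each contributing $w(t)\,\Theta(t^{-(r-2)})$ — one gets $w(1)-w(x)=\Theta\bigl(\tfrac{\me{x}-\me{1}}{t^{r-2}}\,w(t)\bigr)$.

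The step I expect to cost the most is the very last one: verifying, for $x<t$, that the $f\in N_x(1)\setminus N_1(x)$ with $t\in f$ are not outweighed by those with $t\notin f$. The natural route is a compression argument inside $\overline G$ — if $g\in\overline G$ with $x\in g$ and $t\notin g$ then $(g\setminus x)\cup t\in\overline G$ — parlayed into a comparison of the two families of non‑edges; I expect this bookkeeping, rather than anything conceptual, to be the delicate point. Everything else (parts \ref{itm:useful-missing-1} and \ref{itm:useful-w-t-1}, the exact identity in \ref{itm:useful-w-x}, and the remaining weight estimates) should follow cleanly from left‑compression, \Cref{cor:frankl}, and the preliminaries of \Cref{sec:prelims,sec:supp}.
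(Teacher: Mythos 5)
Parts~\ref{itm:useful-missing-1} and~\ref{itm:useful-w-t-1} are correct. Part~\ref{itm:useful-missing-1} is the same injection as the paper. For~\ref{itm:useful-w-t-1} you use a genuinely different decomposition: you peel off the vertices $t$ and then $t-1$ and compare $\lambda(G)$ with $\lambda([t-2]^{(r)})\cdot\frac{(1-(u+v))^r}{1-r(u+v)}$, then Taylor-expand. The paper instead deletes every edge with at least two vertices in $\{t-2,t-1,t\}$, invokes \Cref{lem:frank}\ref{itm:opt-ij}, and bounds the lost weight by $\frac{3}{(r-2)!}w(t-1)w(t-2)$. Your route actually yields the sharper constant $w(t-1)\ge\frac{1}{2t}+O(t^{-2})$ (the paper only needs $\frac{1}{6t}$), so this is a clean variant.

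For~\ref{itm:useful-w-x} the scaffold — \Cref{cor:frankl} with $N_1(x)\subseteq N_x(1)$, the exact identity $|N_x(1)\setminus N_1(x)|=\me{x}-\me{1}$, the bound $w(N(1,x))=\Theta(1)$, and the $x=t$ case — all match the paper and are fine. But the step you flag as delicate is a genuine gap, and the compression argument you propose will not close it. The map $g\mapsto(g\setminus x)\cup t$ sends non-edges through $x$ to non-edges avoiding $x$, so it does not compare the two subfamilies of $N_x(1)\setminus N_1(x)$ you actually need to compare; and more fundamentally, if $w(t)=o(1/t)$ you would need the $t$-avoiding members to be outnumbered by a factor involving $w(t)$, which is not a combinatorial quantity and cannot come from compression alone. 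What is needed here is an \emph{exchange} argument exploiting that $\lambda(G)=\Lambda(m,r)$: every non-edge has weight at most that of every edge (else swapping would increase $w(G)$), and since $\{1,\ldots,r-1,t\}$ is an edge (by left-compression, because $|\overline G|\le\binom{t-2}{r-2}<\binom{t-1}{r-1}$ so some edge contains $t$), every non-edge has weight $O\!\left(w(1)^{r-1}w(t)\right)=O(w(t)t^{-(r-1)})$. Dividing by $w(x)=\Theta(1/t)$ gives $w(f)=O(w(t)t^{-(r-2)})$ uniformly over $f\in N_x(1)\setminus N_1(x)$, which is the upper bound you need; the matching lower bound $w(f)\ge\min\{w(t-1)^{r-1},w(t)w(t-1)^{r-2}\}=\Omega(w(t)t^{-(r-2)})$ (using $w(t)=O(1/t)$) is immediate. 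This is precisely the content of the paper's \Cref{cl:wt-missing}, which splits into cases according to whether $w(t)$ is comparable to $1/t$, but the swap inequality above gives it directly and is the right tool in place of your proposed compression.
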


		\begin{proof}
			We note first that if $w(x) = 0$ for some $x \in [t]$, then $\lambda(G) \le \lambda([t-1]^{(r)})$, which, by maximality of $G$, implies that $a = \binom{t-2}{r-2}$ (otherwise, by \Cref{rem:principal-range}, $G$ does not maximise the Lagrangian among $r$-graphs with $m$ edges). In this case $G$ must in fact be the graph $[t-1]^{(r)}$, by maximality of $G$ and the assumption that $G$ is left-compressed, and $w(x) = 1/(t-1)$ for every $x \in [t-1]$. It is easy to check that all the required statements hold in this case. Thus, we may assume that $w(x) > 0$ for every $x \in [t]$. By the assumption that $G$ is left-compressed, $w$ is decreasing.

			Statement \ref{itm:useful-missing-1} follows from the fact that $\me{1} \le \me{x}$ for every $x \in [t]$, which is a consequence of the assumption that $G$ is left-compressed.
			
			Now consider \ref{itm:useful-w-t-1}. Let $G'$ be obtained from $G$ by removing any edge containing at least two of the three vertices $t, t-1, t-2$. By \Cref{lem:frank} \ref{itm:opt-ij}, $\lambda(G') \le \lambda([t-2]^{(r-2)})$. Since $G$ maximises the Lagrangian, we have $w(G) \ge \lambda([t-1]^{(r)})$. Hence, by \Cref{lem:lagclique} \ref{itm:lag-clique}, $w(G) - w(G')$ is at least $\frac{1}{2(r-2)!} \cdot \frac{1}{t^2} + O(t^{-3})$, but is also at most 
			\[
				\frac{1}{(r-2)!} \cdot \Big(w(t)w(t-1) + w(t)w(t-2) + w(t-1)w(t-2)\Big) \le \frac{3}{(r-2)!} \cdot w(t-1)w(t-2).
			\]
			Since $w(t-2) \le \frac{1}{t-2}$, we find that $w(t-1) \ge \frac{1}{6t} + O(t^{-2})$, as required for \ref{itm:useful-w-t-1}.
		
			We now prove \ref{itm:useful-w-x}. First, we claim that $w(N(1, x)) = \Theta(1)$. Indeed, by \ref{itm:useful-missing-1}, there are at most $O(t^{r-3})$ non-edges in $N(1,x)$ (as a graph on vertex set $[t] \setminus \{1,x\}$), and by \ref{itm:useful-w-t-1}, the weight of each edge in $N(1,x)$, except for possibly those containing $t$, is $\Omega(t^{-(r-2)})$. Hence, 
			\[	
				w(N(1,x)) = \Omega\left( \left(\binom{t-3}{r-2} -  O(t^{r-3})\right) \cdot t^{-(r-2)} \right) = \Omega(1).
			\]
			For an upper bound, it follows from \Cref{lem:lagclique} \ref{itm:lag-clique} that
			\[	
				w(N(1,x)) \le \lambda([t-2]^{(r-2)}) \le 1/(r-2)! = O(1).
			\]
			
			\begin{claim}\label{cl:wt-missing}
				The weight of each absent edge of $G$ is $\Theta(w(t) \cdot t^{-(r-1)})$.
			\end{claim}
			\begin{proof}
				First consider the case where $w(t) > \frac{1}{2\cdot (6t)^r \cdot w(1)^{r-1}}$ (so $w(t) = \Omega(t^{-1})$ by \Cref{prop:bigO} \ref{itm:vx-bound}). By \Cref{prop:bigO} \ref{itm:vx-bound} and by \ref{itm:useful-w-t-1}, all vertices other than $t$ have weight $\Theta(t^{-1})$. It follows that, in this case, the weight of each absent edge is $\Theta(t^{-r}) = \Theta(w(t) t^{-(r-1)})$. 
				
				We now show that if $w(t) \le \frac{1}{2\cdot (6t)^r \cdot w(1)^{r-1}}$ then all absent edges contain $t$. Indeed, by maximality of $w(G)$, the weight of any absent edge is at most the weight of any existing edge. Now, if there is an absent edge that does not contain $t$ then it has weight at least $\frac{1}{(6t)^r} + O(t^{-(r+1)})$, by \ref{itm:useful-w-t-1}, which is larger than the weight of any $r$-set that contains $t$ (as the weight of any such $r$-set is at most $w(t) \cdot w(1)^{r-1} \le \frac{1}{2 \cdot (6t)^r}$), so all $r$-sets that contain $t$ are absent edges of $G$. But this implies that the number of absent edges is larger than $\binom{t-1}{r-1}$, a contradiction. So in this case, all absent edges contain $t$, as claimed. It follows that the weight of each absent edge is $\Theta(w(t) t^{-(r-1)})$. 
			\end{proof}
			As $G$ is left-compressed, $N_1(x) \subseteq N_x(1)$, so $|N_x(1)\setminus N_1(x)| = \me{x} - \me{1}$. By \Cref{cl:wt-missing}, each absent edge has weight $\Theta(w(t) t^{-(r-2)})$. Thus we have $w(N_x(1)) - w(N_1(x)) = \Theta\left(\frac{\me{x} - \me{1}}{t^{r-1}}\cdot w(t)\right)$, and  as $w(N(1, x)) = \Theta(1)$, using \Cref{cor:frankl},
			\[
				w(x) = w(1) - \Theta\left(\frac{\me{x} - \me{1}}{t^{r-1}}\cdot w(t)\right),
			\]
			as required for the second part of \ref{itm:useful-w-x}. The first part follows similarly, using $w(t-1) = \Omega(t^{-1})$ (see \ref{itm:useful-w-t-1}).
		\end{proof}

	\subsection{Proof of \Cref{thm:clique}}\label{subsec:clique}
		
		We are now ready to complete the proof of \Cref{thm:clique}. Before giving the details, we provide a brief overview. Define $H$ to be the $r$-graph on vertex set $[t]$ whose non-edges are exactly the $r$-tuples that contain $t-1$ and $t$ (so $H = \col{m}{r}$, where $m = \binom{t}{r} - \binom{t-2}{r-2}$).
		As $G$ and $H$ have the same number of edges, we can pair the elements of $E(G) \setminus E(H)$ with the elements of $E(H) \setminus E(G)$, and think of $H$ as obtained from $G$ by swapping edges and non-edges that form pairs. We evaluate $w(G) - w(H)$ by thinking of $H$ in this way and evaluating the contribution of each swap. Then we use the symmetry of $H$ to show that by slightly modifying $w$, we are able to regain more weight than we lost, thus showing that $\lambda(H) > \lambda(G)$, a contradiction to the choice of $G$.
		
		\begin{proof}[Proof of \Cref{thm:clique}]

			Let $G$ be an $r$-graph with at most $m := \binom{t}{r} - \binom{t-2}{r-2}$ edges. Suppose that $G$ maximises the Lagrangian among $r$-graphs with $m$ edges, and let $w$ be a maximal weighting of $G$. Without loss of generality, we assume that $w(x) > 0$ for every $x \in V(G)$. It follows from \Cref{thm:t-vs} that, without loss of generality, $G \subseteq [t]^{(r)}$.			
			If there is a vertex $x \in [t]$ with $w(x) = 0$, then $\lambda(G) \le \lambda([t-1]^{(r)})$, completing the proof of the theorem. Thus, we may assume that $w(x) > 0$ for every $x \in [t]$. It follows that $G$ has exactly $m$ edges, because adding any non-edge in $[t]^{(r)}$ would increase the weight of $G$. We may also assume that $w$ is decreasing, and that $G$ is left-compressed, by \Cref{lem:leftcomp-strong}.

			We may further assume that some non-edge of $G$ does not contain both $t$ and $t-1$; otherwise, the non-edges of $G$ are exactly those that contain both $t$ and $t-1$, so by \Cref{lem:frank} \ref{itm:opt-ij} we may remove one of $t$ and $t-1$ without decreasing $\lambda(G)$, but then we obtain a graph on $t-1$ vertices, hence $\lambda(G) \le \lambda([t-1]^{(r)})$, as required.

			Let $H = \col{m}{r}$, i.e.\ $H$ is the subgraph of $[t]^{(r)}$ obtained by removing all edges that contain both $t-1$ and $t$.
			
			\begin{claim} \label{cl:weight-loss-swaps}
				$w(G) - w(H) = O(t^{-0.1}w(t)^2)$.
			\end{claim}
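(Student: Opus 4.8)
The plan is to pass to the complements of $G$ and $H$ inside $[t]^{(r)}$. Since $w(G) = w([t]^{(r)}) - w(\overline{G})$ and $w(H) = w([t]^{(r)}) - w(\overline{H})$, we get $w(G) - w(H) = w(\overline{H}) - w(\overline{G})$, where $\overline{H}$ is precisely the family of $r$-sets containing $\{t-1,t\}$ and $\overline{G}$ is an up-set of size $\binom{t-2}{r-2}$ (as $G$ is left-compressed with $m = \binom{t}{r} - \binom{t-2}{r-2}$ edges). Splitting off the common non-edges $\overline{G}\cap\overline{H}$ yields the identity
\[
	w(G) - w(H) \;=\; w(t-1)\,w(t)\, w\big(N_G(t-1,t)\big) \;-\; w(B),
\]
where $B := \overline{G}\setminus\overline{H}$ is the set of non-edges of $G$ not containing $\{t-1,t\}$ and $N_G(t-1,t)$ is regarded as an $(r-2)$-graph on $[t-2]$; moreover $|B| = |N_G(t-1,t)|$, since $|\overline{G}| = |\overline{H}|$. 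Pairing each $f \in B$ with a distinct edge $e_f$ of $G$ through $\{t-1,t\}$, maximality of $G$ forces $w(e_f) \ge w(f)$, so the quantity above equals $\sum_{f\in B}\big(w(e_f) - w(f)\big) \ge 0$; the content of the claim is the matching upper bound.

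First I would record a quasi-regularity property of $w$. By Proposition \ref{prop:prelims}(iii), $w(x) - w(1) = \Theta\!\big(\tfrac{\me{x}-\me{1}}{t^{r-2}}\,w(t)\big)$ for $x < t$; since $\me{1} = O(t^{r-3})$ by Proposition \ref{prop:prelims}(i) and $\sum_x \me{x} = r\binom{t-2}{r-2} = O(t^{r-2})$, at most $O(t^{0.1})$ vertices $x$ have $\me{x} > t^{r-2-0.1}$, and every other vertex (in particular vertex $1$) satisfies $w(x) = (1 + O(t^{-0.1}))\,w(1)$; call these the \emph{good} vertices. Together with $w(x) = \Theta(1/t)$ for $x \le t-1$ (Propositions \ref{prop:prelims}(ii) and \ref{prop:bigO}) and the fact that every absent edge of $G$ has weight $\Theta(w(t)\,t^{-(r-1)})$ (Claim \ref{cl:wt-missing}), this controls all the weights appearing in the identity up to a factor $1 + O(t^{-0.1})$.

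I would then split according to the two regimes from the proof of Claim \ref{cl:wt-missing}. If $w(t) = \Omega(1/t)$ — so $w(t) = \Theta(1/t)$, and all but the $O(t^{0.1})$ bad vertex weights lie within a factor $1 + O(t^{-0.1})$ of $w(1)$ — I would estimate $w(B)$ and $w(t-1)w(t)\,w(N_G(t-1,t))$ against each other termwise, via a bijection $B \to N_G(t-1,t)$ adapted to the left-compressed structure of $\overline{G}$ that moves only good vertices, so that corresponding terms agree up to a factor $1 + O(t^{-0.1})$; summing over the at most $\binom{t-2}{r-2} = O(t^{r-2})$ terms, each of size $\Theta(w(t)\,t^{-(r-1)})$, then gives a difference $O\big(t^{-0.1}\cdot t^{r-2}\cdot w(t)\,t^{-(r-1)}\big) = O(t^{-0.1}w(t)/t) = O(t^{-0.1}w(t)^2)$, using $w(t) = \Theta(1/t)$. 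If instead $w(t) = o(1/t)$, then the proof of Claim \ref{cl:wt-missing} shows every absent edge of $G$ contains $t$, so $G \supseteq [t-1]^{(r)}$; plugging $w(N_G(t)) = w(N_G(t-1))$ from Lemma \ref{lem:frank}(i) into the identity collapses it to $w(G) - w(H) = w(t)^2\, w(N_G(t-1,t))$, and it then suffices to show $w(N_G(t-1,t)) = O(t^{-0.1})$.

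The hard part will be this last inequality: the bound $w(N_G(t-1,t)) = O(1)$ is immediate but too weak, and one needs that the non-edges of $G$ avoiding $t-1$ — equivalently, the members of the up-set $\overline{G}$ not containing $t-1$ — carry only an $O(t^{-0.1})$ fraction of the total weight $\sum_{S \in [t-2]^{(r-2)}} w(S)$. I expect this to come from combining the rigidity $G \supseteq [t-1]^{(r)}$ with the fact that $|\overline{G}| = \binom{t-2}{r-2}$ is far smaller than $\binom{t-1}{r-1}$: the lower shadow of the part of $\overline{G}$ avoiding $t-1$ must again lie in $\overline{G}$, which by Kruskal–Katona forces that part to be small, and iterating this across the top vertices should yield the required $t^{-0.1}$ saving. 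A secondary difficulty, in the first regime, is arranging the bijection so that bad vertices (including possibly $t-1$ and $t$) are never moved, so that the per-pair error really is $O(t^{-0.1})$ rather than $O(1)$.
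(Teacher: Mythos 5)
Your complement identity
\[
w(G) - w(H) \;=\; w(t-1)\,w(t)\,w\bigl(N_G(t-1,t)\bigr) - w(B)
\]
is correct, and so (in the sub-regime where every non-edge contains $t$) is the further collapse to $w(G)-w(H) = w(t)^2\,w(N_G(t-1,t))$, obtained by feeding $N_t(t-1) = [t-2]^{(r-1)}$ into \Cref{cor:frankl}. This is essentially the paper's pairing of $G\setminus H$ with $H\setminus G$ repackaged as a single exact identity, which is a nice reformulation. However, the proposal has a genuine gap exactly where you flagged it: the bound $w(N_G(t-1,t)) = O(t^{-0.1})$, i.e.\ $|B| = O(t^{r-2-0.1})$, cannot be extracted from Kruskal--Katona or any other purely combinatorial property of the up-set $\overline{G}$. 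Take $r=3$ and let $\overline{N}(t) := \{S \subseteq [t-1] : |S|=r-1,\ S\cup\{t\}\notin G\}$ be the final segment $[a,t-1]^{(2)}$ with $a$ chosen so that $\binom{t-a}{2} = \binom{t-2}{1}$. This is a perfectly legitimate up-set with all non-edges containing $t$, yet $e(t-1) = t-1-a = \Theta(\sqrt{t})$ and hence $|B| = (1-o(1))\binom{t-2}{r-2}$, so $w(N_G(t-1,t)) = \Theta(1)$. Kruskal--Katona applied to the shadow of $B$ only forces $e(t-1) \ge \Omega\bigl(|B|^{(r-2)/(r-1)}\bigr)$, which this example meets with equality, so iterating over the top vertices cannot improve the exponent to the needed $t^{-0.1}$ saving.

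The statement you need is true for a maximiser, but only because of the Lagrangian-maximality of $G$, not because of the up-set structure alone. The paper exploits this via an exchange argument: it splits on whether $\me{t-1}$ is at most or at least $t^{r-2-0.1}$, and in the large-$\me{t-1}$ case it defines $S := \{x : \me{x} \ge t^{-0.1}\me{t-1}\}$ (a set of $O(t^{0.2})$ ``heavy'' vertices) and proves, using \Cref{prop:prelims}~\ref{itm:useful-w-x} and the fact that no non-edge can outweigh an edge, that \emph{every} non-edge must contain at least two vertices of $S$ --- this is the step that actually uses maximality and that your Kruskal--Katona sketch has no substitute for. (In the small-$\me{t-1}$ case the crude per-swap bound in terms of $\me{x_{r-2}}$ already suffices.) Your case split by $w(t)$ rather than by $\me{t-1}$ also slightly obscures matters: when $w(t) = o(1/t)$ both of the paper's sub-cases can a priori arise, and it is precisely the exchange argument that rules out the problematic overlap. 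So the overall shape of the argument is right, but the plan for the hard case needs to be replaced by an exchange argument of the paper's type rather than a shadow/KK estimate. The ``secondary difficulty'' you mention in the easy regime (making the bijection move only good vertices) is a real but minor technical point; the paper avoids it by using the coarse bound $w(y)-w(x) \le (w(1)^{r-2}-w(x_{r-2})^{r-2})w(t-1)w(t)$ rather than a matched bijection.
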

				
			\begin{proof}
				We pair $r$-sets of $G \setminus H$ with $r$-sets of $H \setminus G$ (note that $|G| = |H|$, so such a pairing exists). Each such pair consists of an edge of $G$ that does not contain $\{t-1, t\}$ and a non-edge of $G$ that does contain $\{t-1, t\}$. In total there are at most $\binom{t-2}{r-2}$ such pairs.
				We think of $H$ as obtained from $G$ by a series of swaps of a non-edge of $G$ with an edge of $G$ (given by these pairs), and thus to evaluate $w(G) - w(H)$ we will estimate the weight lost by each of these swaps.

				For $x_1 < \ldots < x_r$ and $y_1 < \ldots < y_{r-2}$, let $x:= (x_1, \ldots, x_r)$ be a non-edge of $G$ not containing $\{t-1,t\}$ and let $y:=(y_1, \ldots, y_{r-2}, t-1, t)$ be an edge of $G$. The weight lost by swapping $x$ for $y$ is bounded from above by the following expression.
				\begin{align} \label{eqn:loss-swap}
					\begin{split}
						& w(y_1, \ldots, y_{r-2}, t-1, t) - w(x_1, \ldots, x_r) \\
						\le \, & \big(w(1)^{r-2} - w(x_{r-2})^{r-2}\big) \cdot w(t-1)w(t) \\
						\le\, &  \left(w(1)^{r-2} - \left(w(1) - O\left(\frac{\me{x_{r-2}}}{t^{r-2}}\cdot w(t)\right)\right)^{r-2}\right) \cdot w(t) w(t-1)  \\
						=\, & O\big(\me{x_{r-2}}  t^{-2(r-2)} w(t)^2\big).
					\end{split}
				\end{align}
				Here we used \Cref{prop:bigO} \ref{itm:vx-bound} and \Cref{prop:prelims} \ref{itm:useful-w-x}.
					
				If $\me{t-1} \le t^{r-2-0.1}$, then also $\me{x_{r-2}} \le t^{r-2-0.1}$ (as $x_{r-2} < t-1$), so the loss from one swap is $O(t^{-(r-2) - 0.1} w(t)^2)$. Thus in total $w(G) - w(H) = O(t^{-0.1}w(t)^2)$, as required.
					
				Now suppose that $\me{t-1} \ge t^{r-2-0.1}$. Let $S := \{x \in [t] : \me{x} \ge t^{-0.1} \cdot e(t-1)\}$.  We have $|S| = O(t^{0.2})$, as the total number of absent edges is $O(t^{r-2})$. We claim that every non-edge of $G$ contains at least two vertices from $S$. Indeed, let $(x_1, \ldots, x_r)$ be an absent edge, and suppose that it contains at most one vertex from $S$. Then, by \Cref{prop:prelims}~\ref{itm:useful-w-x},
				\begin{align} \label{eqn:e-t-2-large}
				\begin{split}
					w(x_1, \ldots, x_r) 
					& \ge \left(w(1) - O\left(\frac{ t^{-0.1} \cdot \me{t-1}}{t^{r-2}} \cdot w(t)\right)\right)^{r-1} \cdot w(t) \\
					& = \left(w(1) - O\left(\frac{ t^{-0.1} \cdot \me{t-1}}{t^{r-2}} \cdot w(t)\right)\right) \cdot w(1)^{r-2} \cdot w(t) \\
					& > w(1)^{r-2} \cdot w(t-1) \cdot w(t).
				\end{split}
				\end{align}
				It follows that the weight of $G$ can be increased by swapping $(x_1, \ldots, x_r)$ with any existing edge of $G$ that contains $t$ and $t-1$, a contradiction.
					
				We split the absent edges into two types: non-edges with exactly two vertices in $S$ (type 1), and non-edges with at least three vertices in $S$ (type 2).  
					
				By \eqref{eqn:loss-swap} and as $x_{r-2} \notin S$, the loss per swap of a type 1 non-edge is $O(t^{-0.1} \cdot e(t-1) \cdot t^{-2(r-2)} w(t)^2) = O(t^{-(r-2)-0.1} w(t)^2)$. Hence the total loss from swaps of non-edges of the first type is $O(t^{-0.1} w(t)^2)$. The loss from a swap of a type 2 non-edge is $O(t^{-(r-2)} w(t)^2)$. Note that, as the number of non-edges containing any fixed three vertices from $S$ is at most $\binom{t-3}{r-3}$, the number of non-edges of the second type is $O(|S|^3 \cdot t^{r-3}) = O(t^{r-3 + 0.6})$. Hence the loss from such edges is $O(t^{-0.4} w(t)^2)$. So the total loss is $O(t^{-0.1} w(t)^2)$, as required.
			\end{proof}

			We define a new weighting $w'$ of $[t]$ by 
			\[	
				w'(x) = \left\{ 
					\begin{array}{ll}
						w(x) & x \neq t,t-1 \\
						w(t-1) + w(t) & x = t-1  \\
						0 & x = t.
					\end{array}
				\right.
			\]
			As $H$ has no edges containing both $t$ and $t-1$ and the neighbourhoods of $t$ and $t-1$ in $[t-2]$ are the same, $w'(H) = w(H)$.
				
			\begin{claim} \label{cl:big-diff}
				There is a vertex $x \in [t-2]$ such that $|w(x) - w'(t-1)| = \Omega(t^{-0.02}w(t))$. 
			\end{claim}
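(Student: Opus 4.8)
The plan is to use that $w$ is decreasing. For every $x\in[t-2]$ we have $w(t-1)\le w(x)\le w(1)$, so
\[
	w'(t-1)-w(x)=w(t)-\bigl(w(x)-w(t-1)\bigr)\le w(t),
\]
and it suffices to exhibit a vertex $x\in[t-2]$ for which $w(x)-w(t-1)$ is separated from $w(t)$, on either side, by a $\Omega(t^{-0.02})$-fraction of $w(t)$. Fix the constant $C>0$ implicit in the estimate $w(x)=w(1)-\Theta\!\bigl(\tfrac{\me{x}-\me{1}}{t^{r-2}}w(t)\bigr)$ of \Cref{prop:prelims}~\ref{itm:useful-w-x}, and distinguish the cases $\me{t-1}\le t^{r-2}/(2C)$ and $\me{t-1}>t^{r-2}/(2C)$.

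In the first case, \Cref{prop:prelims}~\ref{itm:useful-w-x} gives $0\le w(1)-w(t-1)\le C\cdot\tfrac{\me{t-1}-\me{1}}{t^{r-2}}\cdot w(t)\le\tfrac12 w(t)$, hence $w'(t-1)-w(1)=w(t)-\bigl(w(1)-w(t-1)\bigr)\ge\tfrac12 w(t)$, so $x=1$ works.

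In the second case, suppose towards a contradiction that $|w(x)-w'(t-1)|=o(t^{-0.02}w(t))$ for every $x\in[t-2]$; writing $d_x:=w(x)-w(t-1)\ge0$ this says $d_x=w(t)\bigl(1+o(t^{-0.02})\bigr)$, and in particular $w(1)-w(t-2)=d_1-d_{t-2}=o(t^{-0.02}w(t))$. Feeding this into \Cref{cor:frankl} for the pair $(1,t-2)$, and using \Cref{cl:wt-missing} (every absent edge of $G$ has weight $\Theta(w(t)t^{-(r-1)})$), the bounds $w(y)=\Theta(1/t)$ for $y\in[t-1]$, $w(N(1,t-2))\le\lambda([t-2]^{(r-2)})=O(1)$, and $|N_{t-2}(1)\setminus N_1(t-2)|=\me{t-2}-\me{1}$, one gets $\me{t-2}-\me{1}=o(t^{r-2-0.02})$, whence $\me{t-2}=o(t^{r-2-0.02})$ by \Cref{prop:prelims}~\ref{itm:useful-missing-1}. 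Applying \Cref{cor:frankl} to the pair $(t-2,t-1)$, one has $w(N(t-2,t-1))=\Theta(1)$ (only $o(t^{r-2})$ of the $\Theta(t^{r-2})$ relevant $(r-2)$-sets are absent or contain $t$, and the rest have weight $\Theta(t^{-(r-2)})$), while $w(t-2)-w(t-1)=d_{t-2}=\Theta(w(t))$, which forces $\me{t-1}-\me{t-2}=|N_{t-1}(t-2)\setminus N_{t-2}(t-1)|=\Theta(t^{r-2})$. Thus $G$ would have $\Theta(t^{r-2})$ absent edges through $t-1$ but only $o(t^{r-2-0.02})$ through $t-2$.

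It remains to contradict this, using that $G$ is left-compressed with $|\overline G|\le\binom{t-2}{r-2}$. Since $\overline G$ is an up-set, raising a small element of a non-edge through $t-1$ (but not through $t-2$) up to $t-2$ produces a non-edge through both $t-1$ and $t-2$; and the bound $|\overline G|\le\binom{t-2}{r-2}$ should confine all non-edges through $t-1$ to a top window of $o(t)$ vertices, so that these compressions are only $o(t)$-to-one. Together this ought to force $\me{t-2}$ to exceed $o(t^{r-2-0.02})$, a contradiction. I expect this final structural/compression count -- making it sharp enough to beat the exponent $0.02$ -- to be the main obstacle; the earlier steps are routine bookkeeping with \Cref{prop:prelims} and \Cref{cor:frankl}.
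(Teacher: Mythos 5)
Your Case~1 is correct and is essentially the paper's first case with a coarser threshold: the paper takes $\me{t-1}\le t^{r-2-0.01}$ and concludes $w'(t-1)-w(1)=\Omega(w(t))$ via \Cref{prop:prelims}~\ref{itm:useful-w-x}, while you work with the cleaner threshold $\me{t-1}\le t^{r-2}/(2C)$. Both are fine. In Case~2 your intermediate bookkeeping is also sound (and partially redundant, since $e(t-1)-e(t-2)=\Theta(t^{r-2})$ already follows from the case hypothesis together with $e(t-2)=o(t^{r-2-0.02})$ and $|\overline G|\le\binom{t-2}{r-2}$). The real problem is exactly the step you flag as "the main obstacle."

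That final compression-counting step is not merely hard to make sharp --- it cannot work as stated. The scenario you are trying to exclude, namely a left-compressed $G$ with $|\overline G|\le\binom{t-2}{r-2}$, $e(t-1)=\Theta(t^{r-2})$ and $e(t-2)=o(t^{r-2-0.02})$, is realised by the very configuration that was set aside at the start of the proof of \Cref{thm:clique}: take $\overline G$ to be the set of all $r$-tuples containing $\{t-1,t\}$. This is an up-set, $|\overline G|=\binom{t-2}{r-2}$, $e(t-1)=\binom{t-2}{r-2}=\Theta(t^{r-2})$ and $e(t-2)=\binom{t-3}{r-3}=\Theta(t^{r-3})=o(t^{r-2-0.02})$, and $e(1)=\Theta(t^{r-3})$, consistent with \Cref{prop:prelims}~\ref{itm:useful-missing-1}. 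In this example the non-edges through $t-1$ but not $t-2$ have minimum element ranging over all of $[t-r]$, so they are not confined to a top window of $o(t)$ vertices; the best generic bound on the preimage multiplicity of your "raise the smallest element to $t-2$" map is $O(t)$, giving only $e(t-1)=O(t)\cdot e(t-2)=o(t^{r-1-0.02})$, which does not contradict $e(t-1)=\Theta(t^{r-2})$.

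What the paper does in Case~2 uses the maximality of $\lambda(G)$, not compression structure: it runs the swap argument of \eqref{eqn:e-t-2-large}. If $e(t-2)<t^{r-2-0.02}$ then every $x\le t-2$ has $e(x)<t^{r-2-0.02}$ (left-compression), so by \Cref{prop:prelims}~\ref{itm:useful-w-x} every such $x$ has $w(x)=w(1)-O(t^{-0.02}w(t))$; combined with $w(t-1)=w(1)-\Omega(t^{-0.01}w(t))$ (from $e(t-1)\ge t^{r-2-0.01}$), one checks that any absent $r$-set not containing both $t-1$ and $t$ has strictly larger weight than any existing edge containing $\{t-1,t\}$, so swapping them would increase $w(G)$, contradicting maximality. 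Hence all non-edges would contain $\{t-1,t\}$ --- which contradicts the standing reduction made at the start of the proof of \Cref{thm:clique}. That reduction (and the appeal to maximality via a swap) is the missing ingredient your compression-only argument has no way to supply.
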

				
			\begin{proof}
				If $\me{t-1} \le t^{r-2 - 0.01}$, then using \Cref{prop:prelims} \ref{itm:useful-w-x} we have $w(t-1) = w(1) - O(t^{-0.01}w(t))$. Hence $w'(t-1) - w(1) = w(t) + w(t-1) - w(1) = \Omega(w(t))$. Otherwise, if $\me{t-1} \ge t^{r-2 - 0.01}$, then $\me{t-2} \ge t^{r-2 - 0.02}$ (otherwise, similarly to the calculation in \eqref{eqn:e-t-2-large}, all non-edges contain $t-1$ and $t$, a contradiction to our assumptions on $G$). Hence $w(t-2) = w(1) - \Omega(t^{-0.02}w(t))$. So either $|w(1) - w'(t-1)| = \Omega(t^{-0.02}w(t))$, or $|w(t-2) - w'(t-1)| = \Omega(t^{-0.02}w(t))$. 
			\end{proof}
				
			Let $x_0$ be as in \Cref{cl:big-diff}. We define a new weighting $w''$ of $[t]$ by 
			\[	
				w''(x) = \left\{ 
					\begin{array}{ll}
						w'(x) & x \neq x_0,t-1 \\
						\frac{1}{2} \cdot \big(w'(t-1) + w'(x_0)\big) & x = t-1 \text{ or } x_0.
					\end{array}
				\right.
			\]
				
			Note that the total weight of the $r$-sets in $[t-1]$ with at most one vertex in $\{x_0, t-1\}$ is the same under $w'$ and $w''$. Hence
			\begin{align*}
				w''(H) - w'(H) 
				& = w(N_H(x_0,t-1)) \cdot \big(w''(x_0)w''(t-1) - w'(x_0) w'(t-1) \big) \\
				& = w(N_H(x_0,t-1)) \cdot \Big(\frac{1}{4}\cdot\big(w'(x_0) + w'(t-1)\big)^2 - w'(x_0) w'(t-1) \Big) \\
				& = w(N_H(x_0,t-1)) \cdot \frac{1}{4} \cdot \big(w(x_0) - w'(t-1)\big)^2 \\
				& = \Omega\left(t^{-0.04}w(t)^2\right), 
			\end{align*}
			where to bound $N_H(x_0,t-1)$ we used the fact that $H$ contains a clique on $[t-1]$, so $|N(x_0,t_1)| = \Omega(t^{r-2})$, and moreover every vertex in $[t-1]$ has weight $\Omega(t^{-1})$ (by definition of $H$ and \Cref{prop:prelims} \ref{itm:useful-w-t-1}). 
			
			Using \Cref{cl:weight-loss-swaps}, we have 
				$$
					w(G) - w'(H) = w(G) - w(H) = O(t^{-0.1}w(t)^2),
				$$
				 hence $w''(H) > w(G)$, a contradiction to the assumption that $G$ maximises the Lagrangian among $r$-graphs with the same number of edges. This completes the proof of \Cref{thm:clique}. 
		\end{proof}

\section{Counterexamples to the Frankl-F\"uredi conjecture}\label{sec:counter}

	In this section we find an infinite family of counterexamples to the Frankl-F\"uredi conjecture (\Cref{conj:TheConj}), for each $r \ge 4$. We achieve this by revealing a connection between the Lagrangian of an $r$-graph $G$ and \emph{the sum of degrees squared} of $G$, denoted $P_2(G)$, where for a hypergraph $H$, we define
	\[	
		\ds{H} := \sum_{x \in V(H)} d(x)^2,
	\] 
	where $d(x)$ is the degree of $x \in H$, i.e.\ the number of edges of $H$ incident with $x$. We also define
	\begin{align*}
		& P_2(r,m,t):= \max \{P_2(H): H \subseteq [t]^{(r)}, |H| = m\}.
	\end{align*}
	The problem of characterising the $r$-graphs $H$ with $t$ vertices satisfying $P_2(H) = P_2(r,|H|,t)$ is a difficult question which has been studied by a number of different people. In \cite{Us2} we give a more detailed discussion of the main results concerning $P_2(H)$ (or see, for example,~\cite{AhlKat,Olp,PelPetSte,AbrFerNeuWat,Cae,Nik07,Bey}). 
	
	The main result of this section is the following theorem, which we shall use to give counterexamples to \Cref{conj:frankl-furedi}. 

	\begin{thm} \label{thm:counter-exs}
		Let $r \ge 4$, $2 \le i \le r-2$ and $t \in \N$.
		Suppose that $G$ is a subgraph of $[t]^{(r)}$ that maximises the Lagrangian among $r$-graphs with $|G|$ edges, and suppose that $G$ is left-compressed and that every member of $\overline{G}:= [t]^{(r)} \setminus G$ contains $I:= \{t-(i-1), \ldots, t\}$. Let $H \subseteq [t-i]^{(r-i)}$ have edge set $\{e \setminus I: e \in G, I \subseteq e\}$. Then $\ds{H} = (1 + O(t^{-(i-1)}))P_2(|H|,r-i,t-i).$
	\end{thm}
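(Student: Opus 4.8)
The plan is to run the same weight-comparison machinery as in the proof of \Cref{thm:clique}, but now comparing $G$ to an $r$-graph $G'$ with the \emph{same} block structure (all non-edges containing $I$) whose associated $(r-i)$-graph $H'$ is chosen to maximise $P_2$ among $(r-i)$-graphs on $t-i$ vertices with $|H|$ edges. Write $J := [t-i] = [t] \setminus I$, and for $x \in J$ let $d_H(x)$ be the degree of $x$ in $H$ and $e(x) := \me{x}$ the number of non-edges of $G$ through $x$; since every non-edge contains $I$, a non-edge through $x \in J$ corresponds exactly to a non-edge of $H$ through $x$, so $e(x) = \binom{t-i-1}{r-i-1} - d_H(x)$. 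The first step, paralleling \Cref{prop:prelims}, is to establish the basic estimates: $w$ is decreasing (as $G$ is left-compressed), every $x \in J$ has weight $\Theta(1/t)$ (the lower bound coming from a Talbot-style argument as in \Cref{lem:almost-done}~\ref{itm:lowerbd}, applied to the $\Omega(1)$ smallest-weight vertices, combined with \Cref{lem:bigdiff}), and — crucially — an estimate of the form
\[
	w(x) = w(1) - \Theta\!\left(\frac{d_H(1) - d_H(x)}{t^{r-2}} \cdot w(\text{something small})\right)
\]
for $x \in J$, obtained from \Cref{cor:frankl} exactly as in \Cref{prop:prelims}~\ref{itm:useful-w-x}, except that here it is cleaner to phrase the correction in terms of the \emph{edges} of $H$ through $x$ rather than the non-edges (the two differ by a global constant, which is what lets us recentre around the maximiser $H'$ rather than the clique).

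Next I would define the modified weighting $w'$ on $G'$: start from $w$, then for each $x \in J$ adjust $w(x)$ by an amount proportional to $\big(d_{H'}(x) - d_H(x)\big)/t^{r-2}$ (so that $w'$ "matches" the degree sequence of $H'$ the way $w$ matched that of $H$), renormalising on $I$ to keep $\sum w' = 1$; one should check this is a legitimate weighting, which is fine since the perturbations are $O(t^{-1})$-smaller than the weights themselves. Then I would compute $w'(G') - w(G)$ by splitting $E(G) \triangle E(G')$ into pairs and, within each graph, splitting edges into those containing $I$ and those not containing $I$. Edges not containing $I$ are common to $[t-i]^{(r)} \subseteq G \cap G'$ and contribute the same up to the $w'$-vs-$w$ perturbation, whose aggregate effect is a quadratic form in the perturbation vector that one evaluates using \Cref{lem:frank}~\ref{itm:nb-same} (neighbourhood weights being equal) — this is the analogue of the $w \to w' \to w''$ re-optimisation step, here packaged as: the second-order gain from locally averaging weights over $J$ is $\Theta(t^{-2}) \cdot \sum_{x \in J}(w(x) - \bar w)^2$, which by the weight estimate is $\Theta(t^{-2(r-1)}) \cdot \sum_x (d_H(x) - \bar d)^2 = \Theta(t^{-2(r-1)})\big(P_2(H) - |H|^2/(t-i)\big)$ up to lower order, and symmetrically with $H'$. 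Edges containing $I$ form a single "fan" and contribute $w(I) \cdot w(N_G(I))$, where $N_G(I)$ is just $H$ (resp.\ $H'$), and here the first-order term is controlled because $\sum_{x}w(x)$ over a fixed-size neighbourhood is roughly determined by $|H| = |H'|$, so the difference is again second order and expressible via $P_2(H') - P_2(H)$ — this is where the factor $w(I) = \Theta(t^{-i})$ enters, and comparing it with the $\Theta(t^{-2})$ scale of the $J$-averaging term is exactly what produces the $O(t^{-(i-1)})$ error: the fan contribution to the comparison is of relative size $t^{-i}/t^{-1} \cdot (\text{stuff}) $ smaller.

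Putting it together, maximality of $G$ gives $w(G) \ge w'(G') + (\text{re-optimisation gain on } G')$, i.e.
\[
	0 \ge w'(G') - w(G) + \Omega\!\big(t^{-2(r-1)}\big)\Big(P_2(|H|, r-i, t-i) - \ds{H}\Big) - O\!\big(t^{-2(r-1) - (i-1)}\big)\cdot P_2(|H|,r-i,t-i),
\]
and since $P_2(|H|,r-i,t-i) - \ds{H} \ge 0$ always, rearranging yields $P_2(|H|,r-i,t-i) - \ds{H} = O(t^{-(i-1)}) P_2(|H|,r-i,t-i)$, which is the claim. \textbf{The main obstacle} I anticipate is the bookkeeping in the fan term: one must show that the \emph{first-order} change in $w(I)\cdot w(N_G(I))$ under the swap from $H$ to $H'$ genuinely cancels (using that $|H| = |H'|$ and that the relevant neighbourhood-weights are equalised by \Cref{lem:frank}~\ref{itm:nb-same}), leaving only a second-order term, and then to track carefully that this second-order term, weighted by $w(I) = \Theta(t^{-i})$, is dominated by the $\Theta(t^{-2})$-scale $J$-averaging term precisely when $i \ge 2$ — this comparison of scales is the heart of why the theorem needs $2 \le i \le r-2$ and why the error is $O(t^{-(i-1)})$ rather than something smaller.
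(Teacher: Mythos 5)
Your high-level plan matches the paper's: compare $G$ (with weighting $w$) to $G'$ (the graph obtained by swapping the fan $H$ over $I$ for a $P_2$-maximiser $H'$) under a modified weighting $w'$ that shifts each $w(x)$, $x \in [t-i]$, by roughly $(d_{H'}(x)-d_H(x))\cdot\Delta$ where $\Delta = \Theta(t^{-(r-1)})$. That is exactly what the paper does. But your scale analysis of the two contributions is wrong in a way that would sink the argument.

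You assert that in the fan term $w(I)\cdot w(N_G(I)) = \beta^i\, w(H)$ the ``first-order change genuinely cancels (using that $|H|=|H'|$ \dots), leaving only a second-order term'' of relative size $O(t^{-(i-1)})$ smaller than the non-$I$ averaging term. This is not the case, and the point is central. Writing $w(x)=\alpha+\delta(x)$ on $[t-i]$ with $\delta(x)=d_H(x)\Delta(1+O(t^{-(i-1)}))$, only the \emph{zeroth}-order term $|H|\alpha^{r-i}$ cancels against $|H'|\alpha^{r-i}$. The \emph{first}-order term is
\[
\alpha^{r-i-1}\sum_{x}d_H(x)\,\delta(x)\ =\ \alpha^{r-i-1}\Delta\,\ds{H}\,(1+O(t^{-(i-1)})),
\]
which does \emph{not} cancel against the $H'$ counterpart, because $\delta(x)\propto d_H(x)$ while $\delta'(x)\propto d_{H'}(x)$; this is precisely the mechanism by which $\ds{H}$ and $\ds{H'}$ enter. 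Multiplying by $\beta^i=w(I)$ and using the definition of $\Delta$ (which makes $\beta^i\alpha^{r-i-1}=\Theta(\Delta)$), the fan contribution to $w'(G')-w(G)$ is $\tfrac{\Delta^2}{(r-2)!}(\ds{H'}-\ds{H})(1+O(t^{-(i-1)}))$ — the \emph{same} order $\Theta(\Delta^2)$ as the non-$I$ averaging term, not $O(t^{-(i-1)})$ smaller. (Relatedly, the coefficient of the averaging term is $\Theta(1)\cdot(\sum\delta^2-\sum\delta'^2)$, not your $\Theta(t^{-2})$.)

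This matters for the logic, not just the bookkeeping. The non-$I$ averaging term contributes $-\tfrac{\Delta^2}{2(r-2)!}(\ds{H'}-\ds{H})$ to $w'(G')-w(G)$, i.e.\ it has the \emph{opposite} sign to the fan term, and the fan wins only because its coefficient is twice as large, giving a net of $+\tfrac{\Delta^2}{2(r-2)!}(\ds{H'}-\ds{H})(1+O(t^{-(i-1)}))$. If the fan really were of lower order, the surviving contribution would be $\approx -\tfrac{\Delta^2}{2(r-2)!}(\ds{H'}-\ds{H})$, which for a $P_2$-maximiser $H'$ is automatically $\le 0$: the inequality $w'(G')\le w(G)$ would hold trivially and give no information at all about $\ds{H}$. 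So your claimed comparison of scales does not produce the error term $O(t^{-(i-1)})$; it destroys the proof. (The extra ``re-optimisation gain on $G'$'' you invoke to rescue the sign is neither needed nor correctly quantified; the paper simply uses $w'(G')\le\lambda(G')\le\lambda(G)=w(G)$.) Where the $O(t^{-(i-1)})$ genuinely comes from is the error in the estimate $\delta(x)=d_H(x)\Delta(1+O(t^{-(i-1)}))$ itself, propagated through both the fan and the averaging computations.
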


	In the next subsection, we use \Cref{thm:counter-exs} to derive \Cref{thm:main2} and give some further comments regarding the statement. We then prove \Cref{thm:counter-exs} in \Cref{subsec:counter-exs}.

	\subsection{Consequences of \Cref{thm:counter-exs}} \label{subsec:remarks}

		To illustrate how \Cref{thm:counter-exs} can be used, we give a simple example.

		\begin{ex} \label{ex:4-4}
			We consider the case $r = 4$ and $m = \binom{t}{r} - \binom{t-2}{r-2} + 4$ here. 
			Let $G$ be the colex graph $\col{m}{r}$. Let $W$ be the set of quadruples in $[t]$ that do not contain $\{t-1,t\}$. So $G$ can be written as
			\begin{align*}
				& G =  W \cup \{Y \cup \{t-1,t\}:Y \in \mathcal{C}(4,2)\}.
			\end{align*}
			In particular, the graph $H$, of pairs of vertices whose union with $\{t-1, t\}$ forms an edge in $G$, is $H = \raisebox{-8pt}{\includegraphics{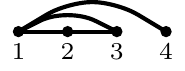}} = \col{4}{2}$.
			Consider the $r$-graph $G'$ with $m$ edges, defined as follows.
			\begin{align*}
				&G' :=  W \cup \left\{Y \cup \{1,t-1,t\}: Y \in \{2,3,4,5\}\right\}.
			\end{align*}
			In this case, the graph $H'$, defined as before with respect to $G'$, is $H' = \raisebox{-8pt}{\includegraphics{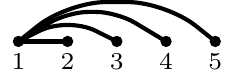}}$, which is the star on five vertices.
			One can check that $\ds{H'} = 20 > 18 = \ds{H}$. It follows from \Cref{thm:counter-exs} that $G$ does not maximise the Lagrangian, among $r$-graphs with $m$ edges.
		\end{ex}

		Note that if $G$ is a colex graph as in \Cref{thm:counter-exs}, then $H$ is a colex $(r-i)$-graph. However, as in \Cref{ex:4-4}, in many cases the colex graph does not maximise, nor is it close to maximising, the sum of degrees squared among $(r-i)$-graphs with order $t-i$ and the same size as $H$. Therefore, \Cref{thm:counter-exs} allows us to find a wide range of counterexamples to \Cref{conj:frankl-furedi}. We now apply \Cref{thm:counter-exs} to prove \Cref{thm:main2}.
		
		\begin{proof}[Proof of \Cref{thm:main2}]
			Let $G:= \col{m}{r}$, where $m = \binom{t}{r} - \binom{t-2}{r-2} + s$, and suppose that $r \le s \le \alpha_r \binom{t-2}{r-2}$, where $\alpha_r := (12(r-2))^{-(r-2)/(r-3)}$. Then, in the language of \Cref{thm:counter-exs}, we have $i = 2$ and $H \simeq \col{s}{r-2}$. By \Cref{thm:counter-exs}, if $G$ maximises the Lagrangian among $r$-graphs with $m$ edges, then $\ds{H} = (1 + O(t^{-1}))P_2(s, r-2, t-2)$, i.e.\ $H$ is close to maximising $\ds{\cdot}$ among $(r-2)$-graphs with $s$ edges and $t-2$ vertices.
			Let $H'$ be the \emph{lex graph} $\lex{s}{r-2}{t-2}$ (for general $s, r, t$ the \emph{lex} graph $\lex{s}{r}{t}$ is the $r$-graph on vertex set $[t]$ with $s$ edges whose edges form an initial segment in the lexicographic order on $[t]^{(r)}$, according to which $A <_{\lexx} B$ if and only if $\min{A \triangle B} \in A$).
			It follows from \Cref{prop:ds-lex-colex,prop:ds-lex-colex-small} below, that $\ds{H} \le (1 - \Omega(1))\ds{H'}$ (indeed, this follows from \Cref{prop:ds-lex-colex} if $\binom{8(r-2)^2}{r-2} \le s \le \alpha_r \binom{t-2}{r-2}$, and it follows from \Cref{prop:ds-lex-colex-small} if $s \ge r$ and $s = O(1)$).

			\begin{prop} \label{prop:ds-lex-colex}
				Let $\binom{8r^2}{r} \le s \le \beta_r \cdot \binom{t}{r}$, where $\beta_r := (12r)^{-r/(r-1)}$. Then $$\ds{\col{s}{r}} \le \frac{1}{2} \cdot \ds{\lex{s}{r}{t}}.$$
			\end{prop}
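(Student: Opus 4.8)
The plan is to compare the two degree sequences directly and show that the lex graph's sum of squares dominates by a factor of at least $2$. First I would write down the degree sequence of $\lex{s}{r}{t}$: it consists of a few vertices of very large degree, roughly $\binom{t-1}{r-1}$, together with a bulk of vertices whose degrees decrease in the pattern dictated by the lexicographic order. The key point is that the largest degree in the lex graph, say $d_1 = d_{\lex{s}{r}{t}}(1)$, is $\Theta(s/t)$ in the relevant range $s \ge \binom{8r^2}{r}$ and $s \le \beta_r \binom{t}{r}$ — indeed, vertex $1$ lies in all edges $\{1\} \cup e$ with $e$ an initial segment of $[\{2,\ldots,t\}]^{(r-1)}$ of size $\min\{s, \binom{t-1}{r-1}\}$, and since $s \le \beta_r \binom{t}{r} \le \binom{t-1}{r-1}$ for large $t$ this gives $d_1 = s$ when $s$ is not too large, or more generally $d_1 = \Omega(s)$. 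Hence $\ds{\lex{s}{r}{t}} \ge d_1^2 = \Omega(s^2)$; in fact with a little care $\ds{\lex{s}{r}{t}} \ge (1-o(1)) s^2$ or at least $\ge c_r s^2$ for an explicit constant.

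Next I would bound $\ds{\col{s}{r}}$ from above. Here the point is that colex spreads the edges out: if $u$ is minimal with $\binom{u}{r} \ge s$, then every edge of $\col{s}{r}$ lies in $[u]$, so the maximum degree of a vertex in $\col{s}{r}$ is at most $\binom{u-1}{r-1}$, and $\sum_x d_{\col{s}{r}}(x) = rs$. Combining these, $\ds{\col{s}{r}} \le \binom{u-1}{r-1} \cdot rs$. So it suffices to show $r \binom{u-1}{r-1} \le \tfrac12 c_r s / \text{(the ratio from the lex bound)}$, i.e.\ essentially $r\binom{u-1}{r-1} \le C s$ for a suitable constant; since $s \ge \binom{u-1}{r}$ (by minimality of $u$, as $\binom{u-1}{r} < s$) we have $\binom{u-1}{r-1} = \frac{r}{u-r}\binom{u-1}{r} < \frac{r}{u-r} s$, which is the quantitative engine. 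One checks that $s \ge \binom{8r^2}{r}$ forces $u \ge 8r^2$ (roughly), hence $\frac{r}{u-r} \le \frac{1}{7r}$ say, giving $\ds{\col{s}{r}} \le \binom{u-1}{r-1} r s \le \frac{r^2}{u-r} s^2 \le \frac{1}{7} s^2$. Matching this against the lower bound $\ds{\lex{s}{r}{t}} \ge (1-o(1)) s^2$ (or $\ge \frac{1}{2} s^2$ after being careful with the constant) then yields $\ds{\col{s}{r}} \le \frac12 \ds{\lex{s}{r}{t}}$, with the upper constraint $s \le \beta_r\binom{t}{r}$ used precisely to guarantee $d_1 = \Omega(s)$ for the lex graph, i.e.\ that $s$ does not exceed $\binom{t-1}{r-1}$ by more than a controlled amount.

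The main obstacle I anticipate is pinning down the correct constant in the lower bound on $\ds{\lex{s}{r}{t}}$ so that the two sides meet with the clean factor $2$: one must be careful that when $s$ is close to $\binom{t-1}{r-1}$ the top degree $d_1$ might saturate at $\binom{t-1}{r-1}$ while further edges start building a second large-degree vertex, so the honest lower bound is something like $\ds{\lex{s}{r}{t}} \ge \max\{\binom{t-1}{r-1}, s - \binom{t-1}{r-1}\}^2 + \dots$; restricting to $s \le \beta_r\binom{t}{r}$ with $\beta_r = (12r)^{-r/(r-1)}$ is exactly what keeps us safely in the regime $d_1 = s$ (or $\ge s/2$), so the case analysis collapses. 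The lower threshold $s \ge \binom{8r^2}{r}$ plays the dual role of making $u$ large enough that $\frac{r}{u-r}$ is small and of making the lower-order terms in the $(1-o(1))$ negligible; I would verify the inequality $r^2/(u-r) \le 1/4$ (say) under $s \ge \binom{8r^2}{r}$ as the one genuinely numerical check. Everything else is bookkeeping with binomial coefficients.
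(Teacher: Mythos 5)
Your proposal matches the paper's proof in the regime $s \le \binom{t-1}{r-1}$: both arguments bound $P_2(\col{s}{r})$ using the fact that all edges live in $[k]$ (your $u$ is the paper's $k$ up to an off-by-one), estimate $\binom{k-1}{r-1} \lesssim rs/k$ via $\binom{k-1}{r} < s$, use $k \ge 8r^2$ to make the ratio small, and compare against $P_2(\lex{s}{r}{t}) \ge s^2$ coming from the degree of vertex $1$. In that range the argument is sound.

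However, there is a genuine gap: you assert that ``$s \le \beta_r\binom{t}{r} \le \binom{t-1}{r-1}$ for large $t$,'' and therefore that the upper constraint on $s$ keeps you in the regime where $d_1 = s$ (or $\ge s/2$). That inequality is in fact false for large $t$: $\beta_r\binom{t}{r}$ grows like $t^r$ while $\binom{t-1}{r-1}$ grows like $t^{r-1}$, so once $t \gtrsim r(12r)^{r/(r-1)}$ you have $\beta_r\binom{t}{r} \gg \binom{t-1}{r-1}$, and the proposition must cover values of $s$ far exceeding the maximum possible degree in $\lex{s}{r}{t}$. In that regime the lower bound $P_2(\lex{s}{r}{t}) \ge \Omega(s^2)$ from a single vertex simply fails ($d_1$ is stuck at $\binom{t-1}{r-1} \ll s$), and your suggested patch $P_2 \ge \max\{\binom{t-1}{r-1}, s - \binom{t-1}{r-1}\}^2 + \cdots$ is still a one-vertex bound that does not recover a factor of $s^2$. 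The paper deals with this by a separate second case: when $s \ge \binom{t-1}{r-1}$, the lex graph has at least $s/(2\binom{t-1}{r-1})$ vertices of degree exactly $\binom{t-1}{r-1}$, giving $P_2(\lex{s}{r}{t}) \ge \tfrac{s}{2}\binom{t-1}{r-1}$; one then bounds $P_2(\col{s}{r}) \le 2rs\binom{k-1}{r-1}$ and reduces the claim to $\binom{k-1}{r-1} \le \tfrac{1}{8r}\binom{t-1}{r-1}$, which is proved by showing $k \le (10r)^{-1/(r-1)} t$. It is precisely here, and not where you suppose, that the hypothesis $s \le \beta_r\binom{t}{r}$ is used: it controls how close $k$ can get to $t$. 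You would need to add this second case (with the many-high-degree-vertices lower bound) to complete the proof.
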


			\begin{prop} \label{prop:ds-lex-colex-small}
				Let $r+2 \le s \le t-r+1$.
				Then $$\ds{\col{s}{r}} \le (1 - \frac{1}{rs^2}) \ds{\lex{s}{r}{t}}.$$
			\end{prop}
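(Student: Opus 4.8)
The plan is to compare the colex graph $\col{s}{r}$ and the lex graph $\lex{s}{r}{t}$ directly by computing, or estimating, $\ds{\cdot}$ for each. For the lex graph $H' = \lex{s}{r}{t}$ on $[t]$, when $r+2 \le s \le t-r+1$, all $s$ edges lie in an initial segment of the lexicographic order, so each edge contains the vertex $1$: indeed the first $\binom{t-1}{r-1}$ sets in lex order all contain $1$, and $s \le t-r+1 \le \binom{t-1}{r-1}$. Thus $d_{H'}(1) = s$, contributing $s^2$ to $\ds{H'}$. This single term already shows $\ds{H'} \ge s^2$. Conversely, the colex graph $\col{s}{r}$ with so few edges lives on roughly $r$ vertices: since $s \le t-r+1$ is small compared to $\binom{t}{r}$ for large $t$, the edges of $\col{s}{r}$ use only $O(s)$ vertices, and more carefully, with $s$ in this range we have $\binom{r-1+\ell}{r}$ edges once the vertex set is $\{1,\dots,r-1+\ell\}$, so the number of active vertices is about $r + s^{1/r}$ — in any case $o(\sqrt s)$ is too strong; what we actually need is just that every degree in $\col{s}{r}$ is small.

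First I would establish the two key estimates. For $H' = \lex{s}{r}{t}$: the degree sequence is governed by how the initial lex segment fans out. After the vertex $1$, the next most popular vertex is $2$; in general vertex $j$ has degree equal to the number of edges among the first $s$ whose smallest element is $\le j$ minus those forced to avoid $j$ — but the clean bound I need is simply $\ds{\lex{s}{r}{t}} \ge s^2$ from the degree of vertex $1$ alone, together with the crude upper bound $\ds{\lex{s}{r}{t}} \le \sum_x d(x)^2 \le s \cdot \max_x d(x) \cdot \big(\text{something}\big)$ — actually we do not need an upper bound on $H'$ at all. For $H = \col{s}{r}$: I claim $d_H(x) \le \binom{r-1+k}{r-1}$ where $k$ is chosen minimal with $\binom{r+k}{r} \ge s$, because in colex all edges lie within $[r+k]$; hence every degree is at most $\binom{r+k-1}{r-1}$, and $\ds{H} \le s \cdot \binom{r+k-1}{r-1}$. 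With $s \ge \binom{r+k-1}{r}$ (by minimality of $k$) one gets $\binom{r+k-1}{r-1} = \frac{r}{k}\binom{r+k-1}{r} \le \frac{r}{k} s$, so a single degree is at most $\frac{r}{k}s$, and summing gives $\ds{H} \le \frac{r}{k} s^2$. When $k \ge r+1$, i.e. $s \ge \binom{2r}{r}$-ish, this already beats $s^2$; for the remaining small $s$ one must be more careful and track the exact degree sequence of $\col{s}{r}$ against the exact degree sequence of $\lex{s}{r}{t}$ and verify the ratio $1 - \frac{1}{rs^2}$ by hand on the finitely many shapes — note the claimed slack $\frac{1}{rs^2}$ is tiny, suggesting the intended argument is an exact comparison of degree sequences rather than asymptotics.

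The cleanest route, which I would actually pursue, is an \emph{exact} term-by-term argument. Writing $s$ in the $r$-cascade (binomial) representation used for colex and comparing with the lex segment, one observes: $\ds{\lex{s}{r}{t}}$ has a summand $s^2$ (vertex $1$) plus nonnegative terms, while $\ds{\col{s}{r}} = \sum_{x} d(x)^2$ where $\sum_x d(x) = rs$ and $\max_x d(x) = d(1) =: D < s$ strictly, because with $s \le t-r+1 < \binom{t-1}{r-1}$ not all edges can avoid a common second vertex, so in colex vertex $1$ fails to be in at least one of the first $s$ edges only if $s > \binom{t-1}{r-1}$ — wait, in \emph{colex} the first $\binom{t-1}{r-1}$ sets are those not containing $t$, not those containing $1$. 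The correct observation for colex is that $d(1) = d(2) = \cdots = d(\text{several small vertices})$ are all equal to roughly $s \cdot r / (\text{number of active vertices})$, so certainly $d(1) \le s$ with equality impossible once $s \ge r+2$. Then $\ds{\col{s}{r}} = \sum d(x)^2 \le \big(\max d(x)\big)\sum d(x) = D \cdot rs$, and one shows $D \le s(1 - \tfrac{1}{rs^2})\cdot(\text{the vertex-1 degree of lex})/(rs)$ — this is getting circular, so the honest statement is: compute $\ds{\lex{s}{r}{t}}$ exactly (it depends only on $s$ and $r$ in this range, being supported on $\le s$ vertices), compute $\ds{\col{s}{r}}$ exactly, and check $\ds{\col{s}{r}} \le (1 - \tfrac{1}{rs^2})\ds{\lex{s}{r}{t}}$ directly from these closed forms.

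The main obstacle, I expect, is precisely this last step: proving the inequality with the \emph{exact} constant $1 - \frac{1}{rs^2}$ rather than just "$\ds{\col{s}{r}} < \ds{\lex{s}{r}{t}}$''. A strict inequality between two positive integers (both $\ds$ values are integers) automatically gives a gap of at least $1$, hence $\ds{\col{s}{r}} \le \ds{\lex{s}{r}{t}} - 1 \le (1 - \tfrac{1}{\ds{\lex{s}{r}{t}}})\ds{\lex{s}{r}{t}}$, so it suffices to show $\ds{\lex{s}{r}{t}} \le rs^2$ together with the strict inequality $\ds{\col{s}{r}} < \ds{\lex{s}{r}{t}}$. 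The bound $\ds{\lex{s}{r}{t}} \le rs^2$ is immediate since $\sum_x d(x) = rs$ and each $d(x) \le s$. The strict inequality $\ds{\col{s}{r}} < \ds{\lex{s}{r}{t}}$ is the combinatorial heart: it reduces to the fact that among $r$-graphs with $s$ edges the lex graph (concentrating edges on few vertices, one of maximal degree $s$) has strictly larger $\ds$ than the colex graph (which spreads degrees more evenly among its $\Theta(s^{1/(r-1)})$-or-so active vertices), for $s \ge r+2$; this can be proved by a compression/shifting argument showing lex maximises $\ds$ in this small-$s$ regime, or by directly exhibiting that $d_{\lex{s}{r}{t}}(1) = s > \max_x d_{\col{s}{r}}(x)$ when $s \ge r+2$ and then noting $\ds$ of a sequence with the same sum $rs$ but a larger maximum entry can be made larger — formally, $\sum d_i^2$ with fixed sum is Schur-convex, and the lex degree sequence majorises the colex one. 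Verifying this majorisation for all $s$ in the range is where the real work lies, but it is elementary given the explicit description of both degree sequences.
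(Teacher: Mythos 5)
Your final reduction is exactly the one the paper uses and is the real cleverness here: both $\ds{\col{s}{r}}$ and $\ds{\lex{s}{r}{t}}$ are integers, so a strict inequality $\ds{\col{s}{r}} < \ds{\lex{s}{r}{t}}$ gives a gap of at least $1$, and combined with $\ds{\lex{s}{r}{t}} \le rs^2$ (which you derive correctly, and which the paper gets from the exact value $\ds{\lex{s}{r}{t}} = (r-1)s^2 + s$ using that all lex edges contain $[r-1]$) this yields the factor $1 - 1/(rs^2)$. So the overall strategy matches.

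However, the route you propose for the strict inequality has a genuine gap. You suggest proving $\ds{\col{s}{r}} < \ds{\lex{s}{r}{t}}$ by showing that the lex degree sequence majorises the colex degree sequence and appealing to Schur-convexity of $\sum d_i^2$. This majorisation is \emph{false} in general. Take $r = 4$, $s = 6$, $t$ large. The colex graph $\col{6}{4}$ consists of the five $4$-subsets of $[5]$ together with $\{1,2,3,6\}$; its degree sequence sorted in decreasing order is $(5,5,5,4,4,1)$. The lex graph has degree sequence $(6,6,6,1,1,1,1,1,1)$. The partial sums of the first four entries are $19$ in both cases, but the first five give $20$ for lex versus $23$ for colex, so the lex sequence does \emph{not} majorise the colex one. (The desired inequality $\ds{\col{6}{4}} = 108 < 114 = \ds{\lex{6}{4}{t}}$ does still hold, just not for the reason you give.) You also muse about comparing maximum degrees, but a single coordinate comparison cannot certify $\sum d_i^2$ inequalities.

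The paper avoids this pitfall by computing $\ds{G} = \sum_{e \in E(G)} \sum_{x \in e} d(x) = \sum_{e,f \in E(G)} |e \cap f|$. Bounding each off-diagonal term by $|e \cap f| \le r-1$ gives $\ds{\col{s}{r}} \le sr + s(s-1)(r-1) = \ds{\lex{s}{r}{t}}$, and the strict inequality follows by exhibiting one pair of colex edges with intersection of size only $r-2$, namely $\{1,\dots,r-1,r+2\}$ and $\{1,3,\dots,r+1\}$, both of which lie in $\col{s}{r}$ once $s \ge r+2$. This identity is the missing ingredient that makes the comparison clean and certifies strictness; you would need to replace your majorisation step with something of this kind.
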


			We postpone the proofs of \Cref{prop:ds-lex-colex,prop:ds-lex-colex-small} to the appendix as they are simple calculations.
		\end{proof}

		\begin{rem}
			The value $m = \binom{t}{r} - \binom{t-2}{r-2} + r$ is the smallest number of edges in the range $[\binom{t-1}{r}, \binom{t}{r}]$ for which \Cref{conj:TheConj} is false for $r = 4$. (By \Cref{thm:main2}, values of $m$ that form counterexamples are in the range $(\binom{t}{r} - \binom{t-2}{r-2}, \binom{t}{r})$, and we checked, but do not include the proof here, that colex is best when $m = \binom{t}{r} - \binom{t-2}{r-2} + a$ for $r=4$, $a \in \{1,2,3\}$.) However, for larger $r$ it is not clear whether there is a counterexample with fewer edges (as there are two $(r-2)$-graphs with $i$ edges, where $3 \le i \le r-1$, which are maximisers of $\ds{\cdot}$: a subgraph of the clique $[r-1]^{(r-2)}$ and the graph all of whose edges contain a fixed set of vertices of size $r-3$, and \Cref{thm:counter-exs} does not distinguish between the two\footnote{It is not hard to see that these are the only maximisers in this range, but we do not prove this here.}).
		\end{rem}

		\begin{rem}
			Recall (from \Cref{thm:r3}) that when $r = 3$ there are no counterexamples for large $t$. This is understandable in light of \Cref{thm:counter-exs} as we would have $i = 2$ so $H$ is a $1$-graph, i.e.\ a collection of singletons, and $1$-graphs are uniquely determined (up to relabelling) by the number of vertices and edges.
		\end{rem}

		\begin{rem}
			While the statement of \Cref{thm:counter-exs} looks quite specialised, it applies to all colex graphs. Indeed, as long as a colex graph has enough non-edges (with respect to $[t]^{(r)}$), then each of them contains the set $I = \{t-i+1, \ldots, t\}$.
			In fact, in \cite{Us2} we prove that for almost all values of $m$ with $\binom{t}{r} - \binom{t-i}{r-i} \le m \le \binom{t}{r}$, there is a maximiser of the Lagrangian which is a subgraph of $[t]^{(r)}$ whose non-edges all contain the set $\{t-i+1, \ldots, t\}$. With this in hand, \Cref{thm:counter-exs} can be used to find maximisers of the Lagrangian for many values of $m$. We elaborate more on this in \cite{Us2}.
		\end{rem}

		\begin{rem} \label{rem:counter-exs-max-exactly}
			In \Cref{thm:counter-exs}, if $e(H)\cdot t^{r-i-1} \cdot t^{-(i-1)} = o(1)$ (i.e.\ if $e(H) = o(t^{2i-r})$) then $H$ maximises (precisely, not just asymptotically) $\ds{H'}$ among $(r-i)$-graphs $H'$ with $t-i$ vertices and the same number of edges as $H$ (as $\ds{H} = O(e(H) \cdot t^{r-i-1})$).
		\end{rem}
	\subsection{Proof of \Cref{thm:counter-exs}} \label{subsec:counter-exs}

	We now turn to the proof of \Cref{thm:counter-exs}. First we give a very brief overview of the proof. We start by estimating $w(x)$ in terms of the number of edges of $H$ incident with $x$, and then we use these estimates to compare $w(G)$ and $w'(G')$, where $G'$ is defined as follows, and $w'$ is a suitable weight function. 
	Recall that $H$ is the collection of $(r-i)$-sets whose union with $I = \{t-(i-1), \ldots, t\}$ is an edge of $G$; in particular $H \subseteq [t-i]^{(r-i)}$. Let $H'$ be an $(r-i)$-graph on vertex set $[t-i]$ with $|H|$ edges, that maximises $P_2(\cdot)$ among all such graphs. We obtain $G'$ by removing from $G$ all edges that contain $I$ and adding as edges all $r$-sets that are a union of $I$ with an edge of $H'$.

		\begin{proof}[Proof of \Cref{thm:counter-exs}]
			Let $w$ be a maximal weighting of $G$; recall that $G$ is left-compressed, so $w$ is decreasing. Say that $x \not= y \in V(G)$ are \emph{twins} if $N_x(y) = N_y(x)$. Note that the vertices $t-(i-1), \ldots, t$ are twins in $G$. Thus, by \eqref{eqn:wvw}, they have the same weight, unless $i = 2$ and there is no edge that contains both $t$ and $t-1$, but in the latter case $H = \emptyset$, and the statement of \Cref{thm:counter-exs} clearly holds. We assume that the former holds, and denote the weight of the vertices $t-(i-1), \ldots, t$ by $\beta$. Write 
			\begin{align*}
				& \Delta = \frac{\beta^i \, w(1)^{r-i-1}}{w(N(1,t-i))} 
				& \alpha = w(t-i) - d(t-i) \cdot \Delta,
			\end{align*}
			where $d(x)$ denotes the degree of $x$ in $H$. For $x \in [t-i]$ write $w(x) = \alpha + \delta(x)$.

			\begin{claim} \label{cl:counter-ex-weight}
				The following estimates hold.
				\begin{enumerate}
					\item \label{itm:Delta}
						$\Delta = O(t^{-(r-1)})$.
					\item \label{itm:alpha}
						$\alpha = t^{-1} + O(t^{-i})$.
					\item \label{itm:delta-x}
						$\delta(x) = d(x) \cdot \Delta (1 + O(t^{-(i-1)}))$ for $x \in [t-i]$.
				\end{enumerate}
			\end{claim}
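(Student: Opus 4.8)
\textbf{Proof proposal for \Cref{cl:counter-ex-weight}.}

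The plan is to establish the three estimates in order, using them to bootstrap one another. First I would pin down $w(N(1,t-i))$ up to a constant factor: since $G$ is left-compressed and $\overline{G}$ consists only of $r$-sets containing $I$, every non-edge of $G$ passing through the pair $\{1, t-i\}$ must also contain $I$, so there are only $O(t^{r-i-2})$ of them in $N(1,t-i)$ (viewed as an $(r-2)$-graph on $[t]\setminus\{1,t-i\}$). Combined with \Cref{prop:prelims}~\ref{itm:useful-w-t-1}-style lower bounds on vertex weights (each vertex of $[t-1]$ has weight $\Omega(t^{-1})$, which here follows from \Cref{prop:bigO}~\ref{itm:vx-bound} and an argument as in \Cref{prop:prelims}~\ref{itm:useful-w-t-1}), almost all of the $\binom{t-2}{r-2}$ potential edges through $\{1,t-i\}$ are present and each contributes weight $\Theta(t^{-(r-2)})$, so $w(N(1,t-i)) = \Theta(1)$. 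Together with $\beta = O(t^{-1})$ (from \Cref{prop:bigO}~\ref{itm:vx-bound}) and $w(1) = O(t^{-1})$, this gives $\Delta = \beta^i w(1)^{r-i-1}/w(N(1,t-i)) = O(t^{-i}) \cdot O(t^{-(r-i-1)}) = O(t^{-(r-1)})$, proving \ref{itm:Delta}.

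Next, for \ref{itm:delta-x}, I would apply \Cref{cor:frankl} to the pair $\{1, x\}$ for each $x \in [t-i]$. Since $G$ is left-compressed, $N_1(x) \subseteq N_x(1)$, and $N_x(1) \setminus N_1(x)$ consists of the $(r-1)$-sets $e$ with $x \cup e \in \overline G$ but $1 \cup e$ an edge — these are exactly the non-edges through $x$ (and not $1$), each containing $I$; the number of such non-edges, after accounting for $d(x)$, is controlled by $d(t-i) - d(x)$ up to error $O(t^{r-i-2} \cdot \text{(stuff)})$. More precisely: a non-edge through $x$ has the form $I \cup f$ with $f \in [t-i]^{(r-i)}$, $x \in f$, $f \notin H$; the number of these is $\binom{t-i-1}{r-i-1} - d_H(x)$, and similarly for $t-i$. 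Each such non-edge has weight $\Theta(\beta^i w(1)^{r-i-1}) = \Theta(\Delta \cdot w(N(1,t-i)))$ by \Cref{cl:wt-missing}-type reasoning, so $w(N_x(1)) - w(N_1(x)) = \big(d_H(t-i) - d_H(x)\big)\cdot \beta^i w(1)^{r-i-1}\cdot(1 + O(t^{-(i-1)}))$, where the error term absorbs both the non-edges containing $1$ and the variation of vertex weights away from $w(1)$. Dividing by $w(N(1,x)) = w(N(1,t-i))(1 + O(t^{-(i-1)}))$ and using \Cref{cor:frankl} gives
\[
	w(1) - w(x) = \big(d_H(t-i) - d_H(x)\big)\cdot \Delta \cdot (1 + O(t^{-(i-1)})).
\]
Setting $x = t-i$ shows $w(1) - \alpha = d_H(t-i)\cdot\Delta(1+O(t^{-(i-1)}))$ after rearranging against the definition $\alpha = w(t-i) - d(t-i)\Delta$, and subtracting yields $w(x) - \alpha = \delta(x) = d_H(x)\cdot\Delta(1 + O(t^{-(i-1)}))$, which is \ref{itm:delta-x}. (One must check the multiplicative error is legitimate: $d_H(x) \le \binom{t-i-1}{r-i-1} = O(t^{r-i-1})$, so $\delta(x) = O(t^{r-i-1})\cdot O(t^{-(r-1)}) = O(t^{-i})$, genuinely small compared to $\alpha \approx t^{-1}$.)

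Finally, \ref{itm:alpha} follows by summing: $\sum_{x\in[t]} w(x) = 1$ gives $(t-i)\alpha + \sum_{x\in[t-i]}\delta(x) + i\beta = 1$, so $\alpha = \frac{1}{t-i}\big(1 - i\beta - \sum_x \delta(x)\big)$; since $\beta = O(t^{-1})$ and $\sum_x \delta(x) = \Delta \sum_x d_H(x)(1+O(t^{-(i-1)})) = \Delta\cdot(r-i)e(H)\cdot(1+o(1)) = O(t^{-(r-1)})\cdot O(t^{r-i}) = O(t^{-(i-1)})$, we get $\alpha = \frac{1}{t}(1 + O(t^{-(i-1)})) = t^{-1} + O(t^{-i})$. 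I expect the main obstacle to be the bookkeeping in \ref{itm:delta-x}: carefully verifying that all the error terms — the non-edges through $1$, the spread of vertex weights, and the approximation $w(N(1,x)) \approx w(N(1,t-i))$ — are each $O(t^{-(i-1)})$ relative to the main term, rather than merely $o(1)$, since the exponent $i-1$ is exactly what propagates into the final statement of \Cref{thm:counter-exs}. The cleanest way to handle this is to first record, as a preliminary sub-claim, that $|w(x) - w(y)| = O(t^{-i})$ for all $x, y \in [t-i]$ and that $|N(1,x) \triangle N(1,y)| = O(t^{r-i-2})$, both of which feed uniformly into the error analysis.
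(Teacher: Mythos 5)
Your outline for part (i) matches the paper's. For parts (ii) and (iii), however, there is a genuine problem in your choice of comparison pair for \Cref{cor:frankl}, and it is not cosmetic.

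You apply \Cref{cor:frankl} to the pair $\{1,x\}$. After the left-compression analysis, this gives $w(1)-w(x) = \big(d_H(1)-d_H(x)\big)\cdot\Delta\cdot(1+O(t^{-(i-1)}))$ (note you wrote $d_H(t-i)-d_H(x)$, which has the wrong sign since $d_H(t-i)\le d_H(x)$; it should read $d_H(1)-d_H(x)$). The multiplicative error here is on the quantity $d_H(1)-d_H(x)$. When you then form $\delta(x)=w(x)-\alpha$ by combining $w(1)-w(x)$ and $w(1)-\alpha$, the accumulated additive error is $O\big(d_H(1)\cdot\Delta\cdot t^{-(i-1)}\big)$, \emph{not} $O\big(d_H(x)\cdot\Delta\cdot t^{-(i-1)}\big)$. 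Since $G$ (hence $H$) is left-compressed, $d_H(1)$ is the \emph{largest} degree; when $d_H(1)\gg d_H(x)$ (which happens, e.g., for vertices $x$ near $t-i$ with tiny degree while $d_H(1)=\Theta(t^{r-i-1})$), your bound gives $\delta(x)=d_H(x)\Delta+O(d_H(1)\Delta t^{-(i-1)})$, which is strictly weaker than the stated $\delta(x)=d_H(x)\Delta(1+O(t^{-(i-1)}))$ and is too loose for the downstream computation of $\sum_x\delta(x)^2$ in \Cref{cl:weight-diff-H-edges,cl:weight-diff-main-edges}. The paper instead applies \Cref{cor:frankl} to the pair $\{x,t-i\}$, exploiting that $d_H(t-i)\le d_H(x)$ for every $x\in[t-i]$: this gives $w(x)-w(t-i)=(d_H(x)-d_H(t-i))\Delta(1+O(t^{-(i-1)}))$, and since $\alpha$ is \emph{defined} as $w(t-i)-d_H(t-i)\Delta$, adding back $d_H(t-i)\Delta$ yields $\delta(x)=d_H(x)\Delta+(d_H(x)-d_H(t-i))\Delta\cdot O(t^{-(i-1)})$, where now the error coefficient $d_H(x)-d_H(t-i)\le d_H(x)$, giving the clean multiplicative form. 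Choosing $t-i$ as the anchor is therefore not incidental — it is exactly what makes the error absorbable into $d_H(x)$.

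There is also a smaller gap in your argument for (ii): you conclude $\alpha=(1/t)(1+O(t^{-(i-1)}))$ from $(t-i)\alpha+i\beta+\sum_x\delta(x)=1$ using only $\beta=O(t^{-1})$ and $\sum_x\delta(x)=O(t^{-(i-1)})$. But $i\beta$ is itself $\Theta(t^{-1})$, so to cancel against $(t-i)\alpha$ and leave only an $O(t^{-(i-1)})$ remainder you need the sharper input $\beta-\alpha=O(t^{-(i-1)})$ (equivalently $w(1)-w(t)=O(t^{-(i-1)})$), which you have not recorded. The paper supplies exactly this as an intermediate estimate (its equation $w(x)=w(1)+O(t^{-(i-1)})$ for $x\in I$), obtained from \Cref{cor:frankl} applied to $\{1,x\}$ for $x\in I$ — and note that this is the one place where the pair $\{1,x\}$ \emph{is} the right choice, because there one only needs an additive $O(t^{-(i-1)})$ bound, not a multiplicative one relative to $d_H(x)$. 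Adding that preliminary estimate would close the gap in (ii), but the error control in (iii) requires switching your comparison vertex from $1$ to $t-i$.
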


			\begin{proof}
				Recall that $w(x) = \Theta(t^{-1})$ for every $x \in [t-1]$ by \Cref{prop:bigO} \ref{itm:vx-bound} and \Cref{prop:prelims} \ref{itm:useful-w-t-1}. In particular, $\beta = \Theta(t^{-1})$. Also, since the number of non-edges containing $\{1, x\}$ for any $x \in [t]$ is $O(t^{r-i-1})$ (as all non-edges contain $\{t-(i-1), \ldots, t\}$), we have 
				\begin{equation} \label{eqn:w-N-1-x}
					w(N(1,x)) = \Theta(1) \,\,\,\,\,\, \text{for $x \in [t]$}.
				\end{equation}
				It follows that $\Delta = O(t^{-(r-1)})$, as required for \ref{itm:Delta}.

				Again by \eqref{eqn:w-N-1-x}, and by \Cref{cor:frankl}, $w(1) - w(x) = O\big(w(N_x(1)) - w(N_1(x))\big) = O(t^{-i})$ for $x \in [t-i]$, where the second inequality follows as the number of absent edges that contain $x$ is $O(t^{r-i-1})$, and the weight of each $(r-1)$-set is $O(t^{-(r-1)})$. Similarly, for $x \in \{t-(i-1), \ldots, t\}$ we have $w(1) - w(x) = O(t^{-(i-1)})$. It follows that
				\begin{equation} \label{eqn:w-x}
					w(x) = \left\{
						\begin{array}{ll}
							w(1) + O(t^{-i}) & x \in [t-i] \\
							w(1) + O(t^{-(i-1)}) & x \in \{t-(i-1), \ldots, t\}.
						\end{array}
					\right.
				\end{equation}
				Summing this over all $x \in [t]$ and rearranging, gives $w(1) = t^{-1} + O(t^{-i})$. Applying \eqref{eqn:w-x} again, using \ref{itm:Delta} and the bound $d(t-i) = O(t^{r-i-1})$, gives $\alpha = t^{-1} + O(t^{-i})$, as required for \ref{itm:alpha}.

				Next, we compare $w(N(x, t-i))$ and $w(N(1, t-i))$, for $x \in \{2, \ldots, t-i-1\}$. As $G$ is left-compressed, $w(N(x,t-i)) - w(N(1,t-i))$ is equal to 
				\[
					(w(1) - w(x)) \cdot w(N(1,x,t-i)) - w\big(N_x(1, t-i) \setminus N_1(x,t-i)\big) = O(t^{-i}),
				\]
				using \eqref{eqn:w-x}, the inequality $w\big(N(1,x,t-i)\big) \le 1/(r-3)!$ (by \Cref{lem:lagclique} \ref{itm:easy-ub}), and that all absent edges contain $\{t-(i-1), \ldots, t\}$. It follows from \eqref{eqn:w-N-1-x} that
				\begin{equation} \label{eqn:compare-w-N}
					w(N(x, t-i)) = w(N(1, t-i)) (1 + O(t^{-i}))\,\,\,\,\,\, \text{for $x \in [t-i-1]$}.
				\end{equation}
				Using \Cref{cor:frankl} again, we find that, for $x \in [t-i-1]$, 
				\begin{align*}
					w(x) - w(t-i) 
					& = \frac{1}{w(N(x,t-i))} \cdot \Big(w(N_{t-i}(x)) - w(N_x(t-i) )\Big) \\ 
					& = \frac{1}{w(N(1,t-i))} \cdot \big(d(x) - d(t-i)\big) \cdot \beta^i w(1)^{r-i-1} \cdot \left(1 + O(t^{-(i-1)})\right) \\
					& = (d(x) - d(t-i)) \cdot \Delta \cdot (1 + O(t^{-(i-1)})) \\
					& = (d(x) - d(t-i)) \cdot \Delta + O\big(d(x) \cdot \Delta \cdot t^{-(i-1)}\big).
				\end{align*}
				The second equality holds due to \eqref{eqn:compare-w-N}, because $N_x(t-i) \subseteq N_{t-i}(x)$, and because the weight of each edge in $N_{t-i}(x)$ is $\beta^i w(1)^{r-i-1} (1 + O(t^{-(i-1)}))$ (using \eqref{eqn:w-x}); the last equality holds as $d(x) \ge d(t-i)$. It follows that, for $x \in [t-i]$, 
				\begin{align*}
					w(x) &= w(t-i) - d(t-i) \cdot \Delta + d(x) \cdot \Delta (1 + O(t^{-(i-1)}))\\
					 &= \alpha + d(x) \cdot \Delta (1 + O(t^{-(i-1)})), 
				\end{align*}
				as required for \ref{itm:delta-x}. 
			\end{proof}

			Let $G'$ be another $r$-graph on vertex set $[t]$ for which every member of $\overline{G'}$ contains $\{t-(i-1), \ldots, t\}$ and suppose that $G'$ has the same number of edges as $G$. Define $H'$ analogously to $H$, and denote by $d'(x)$ the degree of a vertex $x$ in $H'$. By assumption, $\lambda(G) \ge \lambda(G')$. Our aim is to show that $\ds{H} \ge \ds{H'}(1 + O(t^{-(i-1)}))$. Denote the number of edges of $H$ by $m$; then $H'$ also has $m$ edges.

			In order to compare $\lambda(G)$ with $\lambda(G')$ we will define a modified weight function $w'$ such that
			\begin{equation} \label{eqn:defn-w'}
				w'(x) = 
					\left\{ 
						\begin{array}{ll}
							\beta & x \in \{t - (i-1),\ldots,t\} \\
							\alpha + d'(x) \cdot \Delta (1 + O(t^{-(i-1)})) & \text{ otherwise.}
						\end{array}
					\right.
			\end{equation}
			For $x \in [t-i]$, write 
			\[
				w(x) = \alpha + d(x) \cdot \Delta (1 + \eps(x)).
			\] 
			Then, by \Cref{cl:counter-ex-weight}, we have $\eps(x) = O(t^{-(i-1)})$. Let 
			\[
				\zeta := \frac{d(1)\eps(1) + \ldots + d(t-i)\eps(t-i)}{d(1) + \ldots + d(t-i)},
			\]  
			and note that $\zeta = O(t^{-(i-1)})$, as $\zeta$ is the weighted average of $\eps(x)$ with $x \in [t-i]$, and $\eps(x) = O(t^{-(i-1)})$ for every $x \in [t-i]$. For $x \in [t-i]$, set 
			\[
				w'(x) = \alpha + \delta'(x) \text{ and } \delta'(x) = d'(x) \cdot \Delta \cdot (1 + \zeta).
			\]
			Observe that $\sum_{x \in [t-i]} \delta'(x) = \sum_{x \in [t-i]} \delta(x)$, from which it follows that $w'$ is a legal weight function that satisfies \eqref{eqn:defn-w'}.
			
			In the next two claims, \Cref{cl:weight-diff-H-edges,cl:weight-diff-main-edges}, we compare $w(G)$ with $w'(G')$.
			
			\begin{claim} \label{cl:weight-diff-H-edges}
				The difference between the weight, with respect to $w'$, of edges in $G'$ that contain $\{t-(i-1), \ldots, t\}$ and the weight, with respect to $w$, of edges in $G$ that contain $\{t-(i-1), \ldots, t\}$ is 
				\begin{align*}
					\frac{\Delta^2}{(r-2)!} \cdot \Big(\ds{H'} - \ds{H} + O\!\left(t^{-(i-1)} (\ds{H} + \ds{H'}) \right) \Big)\big(1 + O(t^{-1})\big).
				\end{align*}
			\end{claim}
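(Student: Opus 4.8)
### Proof Proposal for Claim \ref{cl:weight-diff-H-edges}

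The plan is to express both weights as sums over edges of $H$ (respectively $H'$), factor out the common term $\beta^i$, and then recognize the resulting expression as essentially $\sum_{e} \prod_{x \in e} w(x)$ over an $(r-i)$-graph, which by the (weighted) multinomial/AM–type expansion relates to $P_2$.

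First I would write out the weight, with respect to $w$, of edges of $G$ containing $I = \{t-(i-1), \ldots, t\}$. Each such edge is $e \cup I$ with $e \in H$, and its weight is $\beta^i \prod_{x \in e} w(x)$. Using \Cref{cl:counter-ex-weight}, $w(x) = \alpha + d(x)\Delta(1 + O(t^{-(i-1)}))$ for $x \in [t-i]$, with $\alpha = t^{-1} + O(t^{-i})$ and $\Delta = O(t^{-(r-1)})$, so $d(x)\Delta = O(t^{r-i-1} \cdot t^{-(r-1)}) = O(t^{-i})$, which is $O(t^{-1})$ times $\alpha$. Hence $\prod_{x \in e} w(x) = \alpha^{r-i} + \alpha^{r-i-1} \Delta \sum_{x \in e} d(x)(1 + O(t^{-(i-1)})) + (\text{lower order})$, where the quadratic-and-beyond terms in $\Delta$ contribute a relative error of $O(t^{-i}/\alpha) \cdot (\text{leading}) = O(t^{-(i-1)})$ of the middle term. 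Summing over $e \in H$, the constant term gives $|H| \cdot \alpha^{r-i}$, and the linear term gives $\alpha^{r-i-1}\Delta \sum_{e \in H} \sum_{x \in e} d(x)(1 + O(t^{-(i-1)})) = \alpha^{r-i-1}\Delta \sum_{x \in [t-i]} d(x)^2 (1 + O(t^{-(i-1)})) = \alpha^{r-i-1}\Delta \cdot P_2(H)(1 + O(t^{-(i-1)}))$ — here we used $\sum_{e \ni x, e \in H} 1 = d(x)$ so the double sum is exactly $\sum_x d(x)^2$. The same computation for $G'$, $H'$, $w'$ yields $|H'|\alpha^{r-i} + \alpha^{r-i-1}\Delta \cdot P_2(H')(1 + O(t^{-(i-1)}))$ (with the same $\alpha, \beta, \Delta$, by construction of $w'$).

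Next I would subtract. Since $|H| = |H'| = m$, the constant terms $|H|\alpha^{r-i}$ cancel exactly, leaving $\beta^i \alpha^{r-i-1}\Delta \cdot \big(P_2(H') - P_2(H) + O(t^{-(i-1)})(P_2(H) + P_2(H'))\big)$. Finally, I would convert the prefactor $\beta^i \alpha^{r-i-1}\Delta$ into the claimed $\frac{\Delta^2}{(r-2)!}(1 + O(t^{-1}))$: recall $\Delta = \beta^i w(1)^{r-i-1}/w(N(1,t-i))$, so $\beta^i = \Delta \cdot w(N(1,t-i))/w(1)^{r-i-1}$, hence $\beta^i \alpha^{r-i-1}\Delta = \Delta^2 \cdot w(N(1,t-i)) \cdot (\alpha/w(1))^{r-i-1} = \Delta^2 \cdot w(N(1,t-i)) \cdot (1 + O(t^{-(i-1)}))$ since $\alpha = w(1)(1 + O(t^{-(i-1)}))$ by \Cref{cl:counter-ex-weight}. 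It remains to show $w(N(1,t-i)) = \frac{1}{(r-2)!}(1 + O(t^{-1}))$: the set $N(1,t-i)$ is an $(r-2)$-graph on $t-2$ vertices missing only $O(t^{r-i-1})$ edges (those whose completion lies outside $G$), each of weight $O(t^{-(r-2)})$, so $w(N(1,t-i)) = w([t-2] \setminus \{1,t-i\})^{(r-2)}) - O(t^{-(r-2)} \cdot t^{r-i-1}) = \lambda([t-2]^{(r-2)})(1+o(1))$ minus a term that is $O(t^{-(i-1)})$; combined with \Cref{lem:lagclique} \ref{itm:lag-clique} this is $\frac{1}{(r-2)!}(1 + O(t^{-1}))$ (here I absorb the $O(t^{-(i-1)})$ error, with $i \ge 2$, into the $O(t^{-1})$ in the statement). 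Collecting the pieces gives exactly the claimed expression.

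The main obstacle is bookkeeping the error terms carefully — in particular verifying that the quadratic-in-$\Delta$ terms in the product expansion, summed over all $O(t^{r-i})$ edges, stay within the claimed relative error $O(t^{-(i-1)})$ of the $P_2$ terms (rather than swamping them), and that the $O(t^{-(i-1)})$ discrepancies between $\alpha$, $w(1)$, and $\beta$ propagate correctly through the prefactor. This requires knowing $P_2(H) = \Theta(P_2(H'))$ up to constants and that both are of order at least $m^2/(t-i) = \Omega(t^{2(r-i)-1})$ while $|H|\alpha^{r-i}$-type corrections are genuinely of lower order relative to $\Delta \cdot P_2(H)$; this is where the hypothesis that $H'$ maximizes $P_2$ and $G$ maximizes the Lagrangian is used to keep everything comparable.
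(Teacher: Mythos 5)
Your proof is correct and follows essentially the same route as the paper's: expand $w(x) = \alpha + \delta(x)$ on $[t-i]$, observe that the $\alpha^{r-i}$ terms cancel exactly because $|H| = |H'|$ and $\alpha$ is shared by $w$ and $w'$ by construction, extract $P_2$ from the linear term via $\sum_{e \in H}\sum_{x\in e}d(x) = \sum_x d(x)^2$, bound the $j\ge 2$ terms by a relative factor $O(t^{-(i-1)})$, and convert $\beta^i\alpha^{r-i-1}\Delta$ to $\frac{\Delta^2}{(r-2)!}(1+O(t^{-1}))$ using the definition of $\Delta$ together with $\alpha = w(1)(1+O(t^{-(i-1)}))$ and $w(N(1,t-i)) = \frac{1}{(r-2)!} + O(t^{-1})$; the only place you are slightly more explicit than the paper is in justifying the last estimate. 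The closing paragraph's worry about $P_2(H)=\Theta(P_2(H'))$ is misplaced for \emph{this} claim — the identity holds regardless; that comparability is only needed at the very end of the proof of \Cref{thm:counter-exs}, after \Cref{cl:weight-diff-main-edges} is also in hand.
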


			\begin{proof}
				Note that the required quantity is $\beta^i \cdot (w'(H') - w(H))$. Let us evaluate $w(H)$.
				\begin{align*}
					w(H) 
					& = \sum_{(x_1, \ldots, x_{r-i}) \in E(H)} w(x_1) \cdots w(x_{r-i}) \\
					& = \sum_{(x_1, \ldots, x_{r-i}) \in E(H)} (\alpha + \delta(x_1)) \cdots (\alpha + \delta(x_{r-i})) \\
					& = \sum_{0 \le j \le r-i} \alpha^{r-i-j} \sum_{1 \le x_1 \le \ldots \le x_j \le t-i} \delta(x_1) \cdots \delta(x_j) \cdot d(x_1, \ldots, x_j) \\
					& = m \alpha^{r-i} + \alpha^{r-i-1} \cdot \Delta \cdot \ds{H} \left(1 + O(t^{-(i-1)})\right), 
				\end{align*}
				where $d(x_1, \ldots, x_j)$ is the number of edges in $H$ that contain $\{x_1, \ldots, x_j\}$. Indeed, we used the facts that $\delta(x) = d(x) \cdot \Delta (1 + O(t^{-(i-1)})) = O(t^{-i})$ (by \Cref{cl:counter-ex-weight} \ref{itm:delta-x} and \ref{itm:Delta} and by $d(x) = O(t^{r-i-1})$) and $\sum_{x_2, \ldots, x_j} d(x_1, \ldots, x_j) = O(d(x_1))$ to obtain the following bound for $j \ge 2$.
				\begin{align*}
					 \alpha^{r - i - j} \sum_{x_1, \ldots, x_j} \delta(x_1) \cdots \delta(x_j) \cdot d(x_1, \ldots, x_j)  
					& =  O(\alpha^{r-i-1} \cdot t^{j-1} \cdot \sum_{x \in [t-i]} d(x)^2 \Delta \cdot t^{(j-1)i}) \\
					& =  O\!\left(\alpha^{r-i-1} \cdot \Delta \cdot \ds{H} \cdot t^{-(i-1)}\right).
				\end{align*}
				A similar argument shows that 
				\begin{align*}
					w'(H')  
					& =  m \alpha^{r-i} + \alpha^{r-i-1} \Delta \cdot \ds{H'} \left(1 + O(t^{-(i-1)})\right). 
				\end{align*}
				Hence 
				\begin{align*}
					& \beta^i\cdot \big(w'(H') - w(H)\big) \\ 
					= \,& \beta^i \cdot \alpha^{r-i-1} \cdot  \Delta \cdot \left(\ds{H'} - \ds{H} + O\!\left(t^{-(i-1)} \left(\ds{H} + \ds{H'} \right) \right) \right) \\
					= \, & \frac{\Delta^2}{(r-2)!} \cdot \Big(\ds{H'} - \ds{H} + O\!\left(t^{-(i-1)} \left(\ds{H} + \ds{H'} \right) \right) \Big)\big(1 + O(t^{-1})\big),
				\end{align*}
				where we used the  definition of $\Delta$ and the estimates $\alpha = w(1)\big(1 + O(t^{-(i-1)})\big)$ and $w(N(1, t-i)) = \frac{1}{(r-2)!} + O(t^{-1})$.
			\end{proof}

			Next we evaluate the contribution of edges of $G$ that do not contain $\{t-(i-1), \ldots, t\}$. 
			\begin{claim} \label{cl:weight-diff-main-edges}
				The difference between the weight of $r$-tuples that do not contain $\{t-(i-1), \ldots, t\}$ with respect to $w$ and $w'$ is
				\begin{align*}
					\frac{\Delta^2}{2(r-2)!} \cdot \Big(\ds{H'} - \ds{H} + O\!\left(t^{-(i-1)} \left(\ds{H} + \ds{H'} \right) \right) \Big)\big(1 + O(t^{-1})\big).	
				\end{align*}
			\end{claim}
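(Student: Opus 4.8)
The plan is to mimic the computation in the proof of \Cref{cl:weight-diff-H-edges}, but now applied to the $r$-tuples $e$ with $I \not\subseteq e$, while keeping track of two complications absent there: such edges may miss $I$ entirely or contain a proper subset of $I$, and (more importantly) the graphs $G$ and $G'$ agree on \emph{which} such $r$-tuples are present (the only non-edges contain $I$), so the difference in weight comes purely from the change $w \rightsquigarrow w'$ on the vertices of $[t-i]$. First I would fix $0 \le \ell \le i$ and group the $r$-tuples not containing $I$ according to the size $\ell = |e \cap I|$ and the choice of the $r-\ell$ vertices in $[t-i]$; since $G$ restricted to these edges is just (a left-compressed piece of) the clique, the set of present $r$-tuples with a given trace on $I$ is determined, and summing over the $[t-i]$-part will again produce clique-neighbourhood weights times symmetric functions of the $\delta$'s and $\delta'$'s.

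The key algebraic point is that the \emph{linear} terms in $\delta$ (respectively $\delta'$) cancel against the linear terms coming from the edges \emph{containing} $I$: by construction $\sum_{x \in [t-i]} \delta(x) = \sum_{x \in [t-i]} \delta'(x)$, and more generally, because $G$ contains the clique on $[t-i]$ minus $O(t^{r-i-1})$ edges, the coefficient of $\sum_x \delta(x)$ in the weight of edges-not-containing-$I$ is, up to $(1+O(t^{-1}))$ factors, the clique value $\binom{t-1}{r-1}\alpha^{r-1}$ common to $w$ and $w'$. Hence the first-order contributions are identical for $w$ and $w'$ and drop out of the difference. What survives is the \emph{quadratic} term: the coefficient of $\sum_{\{x,y\}} \delta(x)\delta(y)\,d_{\text{clique}}(x,y)$, which by the same neighbourhood estimates equals $\tfrac{1}{2}\cdot\tfrac{1}{(r-2)!}(1+O(t^{-1}))$ times $\big(\sum_x \delta(x)\big)^2 - \sum_x \delta(x)^2$ plus lower-order cross terms, and since $\sum_x \delta(x)$ matches for $w$ and $w'$, only $-\sum_x \delta(x)^2$ contributes to the difference. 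Plugging in $\delta(x) = d(x)\Delta(1+O(t^{-(i-1)}))$ and $\delta'(x) = d'(x)\Delta(1+O(t^{-(i-1)}))$ gives $\sum_x \delta'(x)^2 - \sum_x \delta(x)^2 = \Delta^2(\ds{H'} - \ds{H})(1 + O(t^{-(i-1)}))$, and collecting constants yields exactly
\[
	\frac{\Delta^2}{2(r-2)!} \cdot \Big(\ds{H'} - \ds{H} + O\!\left(t^{-(i-1)} \left(\ds{H} + \ds{H'} \right) \right) \Big)\big(1 + O(t^{-1})\big).
\]
Note the factor $\tfrac12$ relative to \Cref{cl:weight-diff-H-edges}: there the relevant symmetric function was $\ds{H}$ itself (a sum over vertices of $d(x)^2$, arising from edges containing the whole of $I$ together with a variable pair $x\le x$ understood as a single repeated vertex — i.e.\ the $j=1$ term $\sum_x d(x)\delta(x)$), whereas here the quadratic term genuinely ranges over unordered pairs $x<y$, producing the $\binom{\cdot}{2}$-type coefficient and hence the extra $\tfrac12$.

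The main obstacle I expect is bookkeeping rather than conceptual: one must verify that every error term introduced by (i) the $O(t^{r-i-1})$ missing edges of the clique on $[t-i]$, (ii) the edges with $1 \le \ell \le i-1$ vertices in $I$ (whose count is $O(t^{r-i-1})$ and whose weights carry extra factors of $\beta = \Theta(t^{-1})$), and (iii) the $j \ge 3$ symmetric terms $\sum \delta(x_1)\cdots\delta(x_j) d(x_1,\dots,x_j)$, is of order $O(t^{-(i-1)})$ times $\Delta^2(\ds{H}+\ds{H'})$ or $O(t^{-1})$ times the main term — exactly as in \Cref{cl:weight-diff-H-edges}, using $d(x) = O(t^{r-i-1})$, $\Delta = O(t^{-(r-1)})$, $\alpha = t^{-1}+O(t^{-i})$ and $\sum_{x_2,\dots,x_j} d(x_1,\dots,x_j) = O(d(x_1))$. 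Once this is in hand the two claims combine to give $w(G) - w'(G') = -\tfrac{3\Delta^2}{2(r-2)!}(\ds{H'}-\ds{H})(1+o(1)) + O(t^{-(i-1)}\Delta^2(\ds{H}+\ds{H'}))$; since $\lambda(G) \ge \lambda(G') \ge w'(G')$, choosing $H'$ to maximise $\ds{\cdot}$ forces $\ds{H} \ge \ds{H'}(1 - O(t^{-(i-1)}))$, which is \Cref{thm:counter-exs}.
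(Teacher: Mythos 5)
Your argument for the claim itself matches the paper's: decompose the $r$-tuples missing $I$ by $\ell = |e \cap I|$ (with $\ell$ running from $0$ to $i-1$, not $i$), note all such tuples are edges of both $G$ and $G'$, expand $w(x) = \alpha + \delta(x)$, use $\sum_x\delta(x) = \sum_x\delta'(x)$ to kill the $j\le 1$ contributions, and read off the $j=2$ term; your explanation of the extra $\tfrac12$ relative to \Cref{cl:weight-diff-H-edges} (a sum over vertices of $d(x)\delta(x)$ there, versus over unordered pairs here) is exactly the right way to see it.

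One slip at the very end, however: the two claims are deliberately stated with opposite orientations --- \Cref{cl:weight-diff-H-edges} is $w'(\cdot)-w(\cdot)$ on the $I$-edges, while \Cref{cl:weight-diff-main-edges} is $w(\cdot)-w'(\cdot)$ on the common $r$-tuples --- so in $w'(G')-w(G)$ they enter with \emph{opposite} signs and partially cancel:
\[
	w'(G')-w(G) \;=\; \Bigl(\tfrac{1}{(r-2)!}-\tfrac{1}{2(r-2)!}\Bigr)\Delta^2\bigl(\ds{H'}-\ds{H}+\cdots\bigr) \;=\; \tfrac{\Delta^2}{2(r-2)!}\bigl(\ds{H'}-\ds{H}+\cdots\bigr),
\]
not the $\tfrac{3\Delta^2}{2(r-2)!}(\cdots)$ you obtained by adding them. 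This does not affect the downstream conclusion (only the sign of the coefficient matters), but the cancellation is the conceptual point: concentrating $\delta'$ on high-$H'$-degree vertices gains weight on the edges through $I$ but \emph{loses} weight on the near-clique of remaining $r$-tuples, and the two effects net out to half the raw gain. Also, the intermediate step ``$\sum_x\delta'(x)^2-\sum_x\delta(x)^2=\Delta^2(\ds{H'}-\ds{H})(1+O(t^{-(i-1)}))$'' is a little too strong as written; the error is additive, $O\bigl(t^{-(i-1)}\Delta^2(\ds{H}+\ds{H'})\bigr)$, which is what the claim's statement carefully records.
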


			\begin{proof}
				Given $0 \le l \le i-1$, the difference between the weight of $r$-tuples that contain exactly $l$ vertices from $\{t-(i-1), \ldots, t\}$ in $w$ and in $w'$ is the following times $\binom{i}{l} \beta^l$.
				\begin{align*}
					& \sum_{x_1 < \ldots < x_{r-l}} 
					\Big( w(x_1) \cdots w(x_{r-l})  -  w'(x_1) \cdots w'(x_{r-l})\Big)\\
					= & \sum_{x_1 < \ldots < x_{r-l}} 
					\Big( (\alpha + \delta(x_1)) \cdots (\alpha + \delta(x_{r-l})) - (\alpha + \delta'(x_1)) \cdots (\alpha + \delta'(x_{r-l})) \Big) \\
					= & \sum_{0 \le j \le r-l} \alpha^{r-l-j} \binom{t-i-j}{r-l-j} 
					\sum_{x_1 < \ldots < x_j} \Big( \delta(x_1) \cdots \delta(x_j) - \delta'(x_1) \cdots \delta'(x_j) \Big) \\
					= & \, \alpha^{r-l-2} \binom{t-i-2}{r-l-2} \cdot \frac{1}{2} \left(
					\sum_x \delta(x)^2 - \sum_x \delta'(x)^2 + O\Big(t^{-(i-1)} \sum_x \big(\delta(x)^2 + \delta'(x)^2\big) \Big) 
					\right) \\
					= & \, 	\frac{\Delta^2}{2(r-l-2)!} \cdot \Big(\ds{H'} - \ds{H} + O\!\left(t^{-(i-1)} \left(\ds{H} + \ds{H'} \right)\right) \Big)\big(1 + O(t^{-1})\big).
				\end{align*}
				Indeed, for the penultimate equality we used the fact that $\sum_{x \in [t-i]} \delta(x) = \sum_{x \in [t-i]} \delta'(x)$ which implies that the summands with $j=0$ and $j=1$ are $0$. Furthermore, for $j = 2$, we used the equation $\sum_{x < y} \delta(x) \delta(y) = \frac{1}{2} \left( \big(\sum_x \delta(x) \big)^2 - \sum_x \delta(x)^2\right)$, which implies that 
				\[
					\sum_{x < y} \delta(x)\delta(y) - \sum_{x < y} \delta'(x) \delta'(y) = \frac{1}{2} \left(\sum_x \delta(x)^2 - \sum_x \delta'(x)^2\right).
				\]
				Similarly, for $j \ge 3$ we have 
				\begin{align*}
					\sum_{x_1 < \ldots < x_j} \delta(x_1) \cdots \delta(x_j) 
					& = \frac{1}{j!} \big(\sum_x \delta(x) \big)^j + O\big(\sum_x \delta(x)^2 \cdot \sum_{x_1 < \ldots < x_{j-2}} \delta(x_1) \cdots \delta(x_{j-1}) \big) \\
					& = \frac{1}{j!} \big(\sum_x \delta(x) \big)^j + O\big(\sum_x \delta(x)^2 \cdot t^{j-2} \cdot t^{-(j-2)i}\big),
				\end{align*}
				using $\delta(x) = O(t^{-i})$.
				We conclude that 
				\[ 
					\sum \delta(x_1) \cdots \delta(x_j) - \sum \delta'(x_1) \cdots \delta'(x_j) = O\bigg(t^{-(i-1)} \Big(\sum_x \delta(x)^2 + \sum_x \delta'(x)^2\Big) \bigg).
				\]
				For the last equality above we used the fact that $\delta(x) = d(x) \cdot \Delta(1 + O(t^{-(i-1)}))$ and similarly for $\delta'(x)$; also, we used the fact that $\alpha = w(1) + O(t^{-i})$.

				\Cref{cl:weight-diff-main-edges} follows: for $1 \le l \le i-1$, the contribution of the edges with exactly $l$ vertices from $\{t-(i-1), \ldots, t\}$ is accounted for in the $O(t^{-1})$ error term, and the main term accounts for the edges with no vertices in $\{t-(i-1), \ldots, t\}$ (unsurprisingly, as there are much more of the latter type of edges than the former).
			\end{proof}

			By \Cref{cl:weight-diff-H-edges,cl:weight-diff-main-edges}, we have  that $w'(G') - w(G)$ is equal to 
			\begin{align*}
						 \frac{\Delta}{2(r-2)!} \left( \ds{H'} - \ds{H} + O(t^{-(i-1)}\left(\ds{H} + \ds{H'} \right))\right)\left(1 + O(t^{-1})\right). 
			\end{align*}
			So since $\lambda(G) = \Lambda(|G|,r)$, we have $\ds{H} \ge \ds{H'}(1 + O(t^{-(i-1)}))$. Since $H'$ can be chosen arbitrarily, by taking $H'$ such that $\ds{H'} = P_2(m,r-i,t-i)$, we find that $\ds{H} = (1 + O(t^{-(i-1)}))P_2(m,r-i,t-i)$, as required for the proof of \Cref{thm:counter-exs}. 
		\end{proof}
\section{Conclusion} \label{sec:conclusion}
	In this paper we showed that the Lagrangian of an $r$-graph with a given number of edges $m$ is not always maximised by an initial segment of colex of size $m$ (see \Cref{thm:main2,thm:counter-exs}), thus disproving a longstanding conjecture of Frankl and F\"uredi. We do however show that for almost all values of $m$, the colex graph is a maximiser (see \Cref{thm:main1}), and we obtain some understanding of the structure of maximisers for other values of $m$ (see \Cref{thm:counter-exs}). It would of course be interesting to find maximisers of the Lagrangian for all $m$, but we suspect that this is a very hard problem. An indication to the difficulty of this problem is the relation to the problem of maximising the sum of degrees squared, over $r$-graphs with given order and size (explored in \Cref{sec:counter}), which in itself appears very hard.

	\subsection*{Acknowledgements}

		This research was partially completed while the third author was visiting ETH Zurich. The third author would like to thank Benny Sudakov and the London Mathematical Society for making this visit possible. 
		We would also like to thank Rob Morris and Imre Leader for their helpful comments and advice.

	\bibliography{lagra}
    \bibliographystyle{amsplain}

	\appendix
	\section{Proof of \Cref{prop:ds-lex-colex,prop:ds-lex-colex-small}}

		\begin{proof}[Proof of \Cref{prop:ds-lex-colex}]
			Let $\LL := \lex{s}{r}{t}$ and $\C := \col{s}{r}$. 
			Let $k$ be the integer satisfying $\binom{k-1}{r} < s \le \binom{k}{r}$. So, since $s \ge \binom{8r^2}{r}$, we have $k \ge 8r^2$.
			Also,
			\begin{equation} \label{eqn:ds-colex}
				\ds{\C} \le \ds{[k]^{(r)}} = k \cdot \binom{k-1}{r-1}^2.
			\end{equation}

			Suppose first that $s \le \binom{t-1}{r-1}$. Then all edges in $\LL$ contain the vertex $1$, i.e.\ $1$ has degree $s$, so $\ds{\LL} \ge s^2$.
			As $\binom{k-1}{r-1} = \frac{r}{k-r} \cdot \binom{k-1}{r} \le \frac{2rs}{k}$ (using $k \ge 2r$ for the last inequality), it follows from \eqref{eqn:ds-colex} that 
			\[
				\ds{\C} \le \frac{4r^2s^2}{k} \le \frac{s^2}{2} \le \frac{1}{2} \cdot \ds{\LL},
			\]
			as required.

			Next, we assume that $s \ge \binom{t-1}{r-1}$. Then, by definition of $\LL$, there are at least $\floor{s / \binom{t-1}{r-1}} \ge s / (2\binom{t-1}{r-1})$ vertices in $\LL$ of degree $\binom{t-1}{r-1}$ (where the inequality follows from the lower bound on $s$). Thus,
			\[
				\ds{\LL} \ge \frac{s}{2 \binom{t-1}{r-1}} \cdot \binom{t-1}{r-1}^2 = \frac{s}{2} \cdot \binom{t-1}{r-1}.
			\]
			On the other hand, using $\binom{k-1}{r-1} \le \frac{2rs}{k}$ again,
			\[
				\ds{\C} \le 2rs \cdot \binom{k-1}{r-1}.
			\]
			Thus, in order to obtain the desired result, it suffices to show that 
			\[
				\binom{k-1}{r-1} \le \frac{1}{8r} \binom{t-1}{r-1}.
			\]
			To show this, we first obtain an upper bound on $k$, which holds for sufficiently large $t$. 
			\begin{align*}
				s 
				&\le (12r)^{-r/(r-1)} \cdot \binom{t}{r} 
				\le (12r)^{-r/(r-1)} \cdot \frac{t^r}{r!} \\
				&= \frac{ \left( (12r)^{-1/(r-1)} \cdot t \right)^r}{r!} 
				\le \frac{ \left( (10r)^{-1/(r-1)} \cdot t - r \right)^r}{r!} 
				\le \binom{(10r)^{-1/(r-1)} \cdot t}{r}.	
			\end{align*}
			It follows that $k \le (10r)^{-1/(r-1)} \cdot t$, which implies that
			\[
				\binom{k-1}{r-1} 
				\le \frac{k^{r-1}}{(r-1)!}
				\le \frac{1}{10r} \cdot \frac{ t^{r-1}}{(r-1)!}
				\le \frac{1}{8r} \cdot \frac{(t-r)^{r-1}}{(r-1)!}
				\le \frac{1}{8r} \cdot \binom{t-1}{r-1},
			\]
			as desired.
		\end{proof}

		\begin{proof}[Proof of \Cref{prop:ds-lex-colex-small}]
			Let $\LL := \lex{s}{r}{t}$ and $\C := \col{s}{r}$.
			As $s \le t - r + 1$, all edges in the graph $\LL$ contain $[r-1]$. In particular, the vertices $1, \ldots, r-1$ have degree $s$, there are $s$ vertices of degree $1$, and the remaining vertices are isolated. Thus
			\[
				\ds{\LL} = (r-1)s^2 + s.
			\]
			We now give a very crude upper bound on $\ds{\C}$. Note that $\C$ contains both edges $(1, \ldots, r-1, r+2)$ and $(1, 3, \ldots, r+1)$, whose intersection has size $r-2$. It follows that
			\begin{align*}
				\ds{\C}
				&= \sum_{e \in E(\C)} \sum_{x \in e} d(x)
				= \sum_{e,f \in E(\C)} |e \cap f| 
				\le sr + \sum_{e, f \in E(\C), \,\, e \neq f} |e \cap f|
				< sr + s(s-1)(r-1)  = \ds{\LL},
			\end{align*}
			where the inequality follows as $|e \cap f| \le (r-1)$ for every distinct edges $e$ and $f$, and there is a strict inequality for at least one pair of distinct edges.
			It follows that
			\[
				\ds{\C} \le \ds{\LL} - 1 = \left(1 - 1/\ds{\LL} \right) \ds{\LL} \le \left(1 - 1/rs^2\right) \ds{\LL},
			\]
			as required.
		\end{proof}

\end{document}